\DeclareOldFontCommand{\rm}{\normalfont\rmfamily}{\mathrm}
\newcommand{\thebottomline}{\renewcommand{\thefootnote}{}\renewcommand{\footnoterule}{}\phantom{M}{\hfill{}\footnotetext{\noindent\textit{\tiny\romannumeral\day.\romannumeral\month.\romannumeral\year}}}}
\newcommand\utimes{\mathbin{\ooalign{$\cup$\cr%
   \hfil\raise0.42ex\hbox{$\scriptscriptstyle\times$}\hfil\cr}}}
\newcommand\bigutimes{\mathop{\ooalign{$\bigcup$\cr%
   \hfil\raise0.36ex\hbox{$\scriptscriptstyle\boldsymbol{\times}$}\hfil\cr}}}
\newcommand{\ab}{\allowbreak}
\theoremstyle{definition}
\newtheorem{thm}{Theorem}[section] 
\newtheorem{cor}[thm]{Corollary}
\newtheorem{lem}[thm]{Lemma}
\newtheorem{prop}[thm]{Proposition}
\theoremstyle{definition}
\newtheorem{defn}[thm]{Definition}
\newtheorem{rem}[thm]{Remark}
\newtheorem{nota}[thm]{Notation}
\newcommand{\cA}{\mathcal{A}}
\newcommand{\cB}{\mathcal{B}}
\newcommand{\cD}{\mathcal{D}}
\newcommand{\cP}{\mathcal{P}}
\newcommand{\ds}{\displaystyle}
\newcommand{\bC}{\mathbb{C}}
\newcommand{\bR}{\mathbb{R}}
\newcommand{\alg}{\mathrm{alg}}
\newcommand{\rE}{\mathrm{E}}
\newcommand{\Tr}{\mathrm{Tr}}
\newcommand{\tr}{\mathrm{tr}}
\newcommand{\comp}{\circ}
\newcommand{\NC}{\mathit{NC}}
\newcommand{\CI}{\mathit{CI}}
\newcommand{\sss}{\scriptscriptstyle}
\newcommand{\sN}{{\scriptscriptstyle N}}
\let\phi=\varphi
\let\epsilon=\varepsilon
\newcommand{\mybullet}{$\vcenter{\hbox{$\scriptscriptstyle\bullet$}}$}
\newcommand{\diag}{\textrm{diag}}
\newcommand{\jp}{j^\perp}
\newcommand\blfootnote[1]{%
     \begingroup
     \renewcommand\thefootnote{}\footnote{#1}%
     \addtocounter{footnote}{-1}%
      \endgroup
}
    \def\footnoterule{\kern-3\p@
      \hrule \@width 2in \kern 2.6\p@} 
\numberwithin{equation}{section} 
\author[Mingo]{James A. Mingo}
\address[J.~A.~Mingo]{Department of Mathematics and Statistics, 
                      Queen's University, 
                      Jeffery Hall,  \ab
                      Kingston, \ab Ontario,  
                      Canada K7L 3N6 
}
\email{mingo@mast.queensu.ca}
\author[Tseng]{Pei-Lun Tseng}
\address[P.-L. Tseng]{Department of Mathematics \\ 
New York University Abu Dhabi \\
Saadiyat Marina District, Abu Dhabi, United Arab Emirates
}
\email{pt2270@nyu.edu}
\title[Infinitesimal Operators]{Infinitesimal Operators and the Distribution of Anticommutators and Commutators}
\begin{document}

\begin{abstract}
In an infinitesimal probability space we consider operators which are infinitesimally free and one of which is \textit{infinitesimal}, in that all its moments vanish. Many previously analysed random matrix models are captured by this framework. We show that there is a simple way of finding non-commutative distributions involving infinitesimal operators and apply this to the commutator and anticommutator. We show the joint infinitesimal distribution of an operator and an infinitesimal idempotent gives us the Boolean cumulants of the given operator. We also show that Boolean cumulants can be expressed as infinitesimal moments thus giving matrix models which exhibit asymptotic Boolean independence and monotone independence.
\end{abstract}

\maketitle

\blfootnote{Research was supported by a Discovery Grant from the Natural Sciences and Engineering Research Council of Canada and the Fields Institute for Research in the Mathematical Sciences.}


\section{Introduction}\label{sec:introduction}

The largest eigenvalues of a random covariance matrix has been much studied in recent years, motivated in many cases by problems in principle component analysis. An important question in principal component analysis is to decide how many eigenvalues can be considered significant. To analyse this problem a spike or signal-plus-noise model is often used where we have a source of noise represented by a random matrix $X_1$ and a signal represented by a non-random matrix $X_2$. Moreover it is usually assumed that the rank of the signal is much much smaller than the rank of the noise, which increases with the sample size.   Even though the rank of $X_2$ is small, it still affects the largest eigenvalue of $X_1 + X_2$. In this paper we idealize this by saying that the signal, $X_2$, is \textit{infinitesimal}.

The tools of free probability give explicit rules (see \cite{V, MS, NS}) for computing the distributions of the sum or product of freely independent random variables. The method of linearization extends these rules to the case of polynomials (see \cite{BS} and \cite[Ch.~10]{MS})  or rational functions (see \cite{msy}) of freely independent random variables, but explicit solutions are often infeasible.  

An example where explicit calculations are feasible is the case of a commutator: $i(x_1 x_2 - x_2 x_1)$ and an anti-commutator: $x_1 x_2 + x_2 x_1$. In \cite[Lect.~19]{NS} the distribution of the commutator of freely independent operators is given; the case of the anticommutator is more involved (see \cite[Rem.~15.21]{NS}); however a graph theoretic description was given by Perales in \cite{Per21}. A related case is the recent work of Campbell \cite{Camp22} on the finite free distribution of a commutator an anticommutator. Also relevant are the papers of Ejsmont, Lehner \cite{EL,EL2}, and H{\k{e}}{\'c}ka \cite{EH} on the tangent law. 

In \cite{BHSakuma18} Collins, Hasebe, and Sakuma investigated the spectrum of the commutator $i(X_1X_2 - X_2  X_1)$ and the anticommutator $X_1X_2 + X_2  X_1$ when the rank $X_1$ tended to infinity and the rank of $X_2$ was fixed. The regime of  \cite{BHSakuma18} can be modelled by the infinitesimal free independence of Belinschi and Shlyakhtenko \cite{BS} and its application to study asymptotic independence given by Shlyakhtenko \cite{shly}. Expanding on this Cébron and Dahlqvist, and Gabriel \cite{GAF22}, used infinitesimal freeness to give a rigorous prove the BBP phase transition phenomena of outliers found by Baik, Ben Arous, and Péché \cite{bbp}. Finally in \cite{TS22} it was shown that infinitesimal free independence was just a special case of freeness with amalgamation over a commutative algebra.  This was the inspiration for our work. As we shall see in this paper, many of the algebraic and combinatorial tools  of \cite{NS} transfer directly to the infinitesimal case. Moreover this transfer also extends to the  Boolean independence of Speicher and Wourodi \cite{SW97}, as was shown by Perales and Tseng in \cite{PT21}. An important point to notice here is that our methods give the full distribution of the commutator and anticommutator, not just the support of the distribution as was done in \cite{BHSakuma18}.

From the point of view of free probability, the key property of $X_2$ used in \cite{BHSakuma18} was that its limit distribution was $\delta_0$, the Dirac mass at $0$. An operator with this property we call \textit{infinitesimal}. 

There has been much recent work connecting free and Boolean cumulants (see for example the papers of Février, Mitja, Nica, and Szpojankowski \cite{fmns} and \cite{fmns1}). We show that if one further assumes  that  the infinitesimal operator is an idempotent. Then there are new and simple formulas relating infinitesimal moments to Boolean cumulants (see Prop.~\ref{prop:boolean cumulants}). This enables us to  produce matrix models exhibiting asymptotic Boolean independence and asymptotic monotone independence, which until now has been difficult to do, see \cite{l} and \cite{male}. Also in Remark \ref{rem:spectral shift} we show how infinitesimal operators give an abstraction for the Markov-Krein transform of Kerov \cite{kerov}.

In \S\ref{sec:preliminaries} we collect the basic notions from \cite{NS}, \cite{FN}, and \cite{MS} that we shall need for our results. In \S\ref{sec:some polynomials} we consider the distribution of the commutator and the anticommutator when $x_1$ and $x_2$ are infinitesimally free and $x_2$ is infinitesimal. In \S\ref{sec:R-diagonal} we consider infinitesimal $R$-diagonal operators. In \S\ref{sec:idempotents} we show how to write a Boolean cumulant as an infinitesimal moment. In \S\ref{sec:boolean-independent} we revisit the  problem in \S\ref{sec:some polynomials} with infinitesimal free independence replaced by Boolean independence. 


\section{Preliminaries}\label{sec:preliminaries}

\subsection{Free Independence}

A \textit{non-commutative probability space} $(\mathcal{A},\varphi)$ consists of a unital algebra over $\mathbb{C}$ and a linear functional $\varphi:\mathcal{A}\to\mathbb{C}$ with $\varphi(1)=1$. We let $\mathbb{C}\langle X_1,\dots,X_k\rangle$ be the algebra on polynomials in the non-commuting variables $X_1, \dots , X_n$.

For $a_1,\dots, a_k$ be elements in $\mathcal{A}$, the mapping $\mathbb{C}\langle X_1,\dots,X_k\rangle \ni p \mapsto \phi(p(a_1, \dots, a_k)) \in \bC$ is the \textit{joint distribution} of $\{a_1,\dots, a_k\}$. We denote this linear functional by $\mu_{a_1, \dots, a_k}$.

For a given non-commutative probability space $(\mathcal{A},\varphi)$, unital subalgebras $(\mathcal{A}_i)_{i\in I}$ of $\mathcal{A}$ are \textit{freely independent} (or \textit{free} for short) if for all $n\in \mathbb{N}$, $a_1,\dots, a_n\in \mathcal{A}$ are such that $a_j\in\mathcal{A}_{i_j}$ where $i_1,\dots, i_n\in I$, $i_1\neq i_2\neq \cdots \neq i_n$ and $\varphi(a_j)=0$ for all $j$, then
$$
\varphi(a_1\cdots a_n)=0.
$$
A set $\{a_i\in\mathcal{A}\mid i\in I\}$ is free if the unital subalgebras $\alg(1,a_i)$ generated by $a_i\ (i\in I)$ form a free family. 

For $n\geq 1$, a partition of $[n]$ is a set $\pi=\{V_1,\dots, V_r\}$ of pairwise disjoint non-empty subsets of $[n]:=\{1,2,\dots, n-1,n\}$ such that 
$V_1\cup \cdots\cup V_r=[n]$. The set of all partitions of $[n]$ is denoted by $\cP(n)$, and $NC(n)$ denotes all non-crossing partitions of $[n]$ in the sense that we cannot find distinct blocks $V_r$ and $V_s$ with $a, c\in V_r$ and $b, d\in V_s$ such that $a<b<c<d$. 

\begin{nota} Let $(\cA, \phi)$ be a non-commutative probability space and $a \in \cA$. Let $m_n(a) = \phi(a^n)$ be the \textit{moments} of $a$. 
Suppose $a_1,\dots, a_n$ are elements in $\mathcal{A}$ and $V=\{i_1<\cdots<i_s\}$ is a block of some partition of $[n]$, we set $(a_1,\dots, a_n)|_V=(a_{i_1},\dots, a_{i_s}).$ Let $\{f_n\}_{n\geq 1}$ is a sequence of multilinear functionals $f_n:\mathcal{A}^n\to\mathbb{C}$, then for each partition $\pi$, we set
$$
f_{\pi}(a_1,\dots,a_n)=\prod_{V\in \pi}f_{|V|}((a_1,\dots,a_n)|_V).
$$
\end{nota}
The \textit{free cumulants} $\{\kappa_n:\mathcal{A}^n\to\mathbb{C}\}_{n\geq 1}$ are defined inductively in terms of moments via 
$$
\varphi(a_1\cdots a_n)=\sum_{\pi\in NC(n)}\kappa_{\pi}(a_1,\dots,a_n).
$$
Note that the free cumulants can also be defined via the following formula 
$$
\kappa_n(a_1,\dots,a_n)=\sum_{\pi\in NC(n)}\mu(\pi,1_n)\varphi_{\pi}(a_1,\dots,a_n),
$$
where $\mu$ is the Möbius function of $NC(n)$ (see \cite[Lect.~11]{NS}).

Free cumulants are crucial to characterizing freeness. More precisely, unital subalgebras $(\mathcal{A}_i)_{i\in I}$ are freely independent if and only if for all $n\geq 2$, and for all $a_j\in \mathcal{A}_{i_j}$ for all $j=1,\dots,n$ with $i_1,\dots, i_n\in I$ we have $\kappa_n(a_1,\dots, a_n)=0$ whenever there exist $1\leq l, k \leq n$ with $i_l\neq i_k$ (see \cite[Thm.~11.16]{NS}). This characterization of freeness is called the \textit{vanishing of mixed cumulants}.

For a given $a\in \mathcal{A}$, the \textit{Cauchy transform} of $a$ is defined by
$$
G_a(z)=\sum_{n=0}^{\infty}\frac{m_n(a)}{z^{n+1}},
$$
and the \textit{$R$-transform} of $a$ is defined as 
$$
R_a(z)=\sum_{n=0}^{\infty}\kappa_{n+1}(a)z^{n} \text{ where }\kappa_n(a)=\kappa_n(a,\dots, a). 
$$
Note that following the vanishing of mixed cumulants property, we have if $a,b\in \cA$ are free, then
$R_{a+b}=R_a+R_b$. We shall always work with formal power series, however if our distributions arise from Borel measures on $\bR$ with compact support, then all of our series converge to an analytic function on an appropriate domain.

\subsection{$\bm{R}$-diagonal Operators}\label{Pre: R-diagonal}

A non-commutative probability space $(\cA, \phi)$ is called a \emph{$*$-probability space} if $\cA$ is a $*$-algebra and $\phi$ is positive in the sense that $\phi(a^*a)\geq 0$ for all $a\in\cA$.  
For $\epsilon \in \{-1, 1\}$ and $a \in \cA$ we let $a^{(\epsilon)} = a$ if $\epsilon = 1$ and $a^{(\epsilon)} = a^*$ if $\epsilon = -1$. Recall from \cite[Lect.~15]{NS} that $a \in\cA$ is $R$-\textit{diagonal} if for all $\epsilon_1, \dots, \epsilon_n \in \{-1, 1\}$ we have 
\[
\kappa_n(a^{(\epsilon_1)},\dots,a^{(\epsilon_n)})=0 
\]
unless $n$ is even and $\epsilon_i + \epsilon_{i+1} = 0$ for $1 \leq i < n$.  In addition, we recall that $a \in \cA$ is \textit{even} if $a = a^*$ and $\varphi(a^{2n+1})=0$ for all $n \geq 0$. 

Note that the $R$-diagonal element is uniquely determined by its determining sequences (see \cite[Cor.~15.7]{NS}). To be precise, if $a$ is an $R$-diagonal element in a $*$-probability space $(\mathcal{A},\varphi)$, then the $*$-distribution of $a$ can be uniquely determined by $(\alpha_n)_n$ and $(\beta_n)_n$ where
$$
\alpha_n=\kappa_{2n}(a,a^*,a,a^*,\dots, a,  a^*) \text{ and }\beta_n=\kappa_{2n}(a^*,a,a^*,a\cdots, a^*,a).
$$
Moreover, the distribution of $aa^*$ and $a^*a$ can be described by its determining sequences $\{\alpha_n\}_n$ and $\{\beta_n\}_n$ in the following relation

\begin{eqnarray*}
    \kappa_n(aa^*,\dots,aa^*)&=&
    \sum_{\substack{\pi\in NC(n)\\
                  \pi=\{V_1,\dots,V_r\}}} \alpha_{|V_1|}\beta_{|V_2|}\cdots \beta_{|V_r|} \\
\kappa_n(a^*a,\dots,a^*a)&=&\sum_{\substack{\pi\in NC(n)\\
                  \pi=\{V_1,\dots,V_r\}}}                 \beta_{|V_1|}\alpha_{|V_2|}\cdots \alpha_{|V_r|}.
\end{eqnarray*}
where $V_1$ is the block of $\pi$ which contains the first element $1$. 

In addition, if $a$ and $b$ are $*$-free elements in a $*$-probability space such that $a$ is $R$-diagonal, then $ab$ is also $R$-diagonal (see \cite[Prop.~15.8]{NS}).

\subsection{Infinitesimal Free Independence}

\begin{defn}
Let $(\cA, \phi)$ be a non-commutative probability space. Let $\phi': \cA \rightarrow \bC$ be a linear functional such that $\phi'(1) = 0$. Then we call the triple $(\cA, \phi, \phi')$ an \textit{infinitesimal probability space}. 
\end{defn}
For $a_1,\dots,a_k\in \cA$, the pair of linear functionals $(\mu,\mu')$ which both map $\mathbb{C}\langle X_1,\dots,X_k\rangle$ into $\mathbb{C}$ are the \textit{joint infinitesimal distribution} of $\{a_1,\dots,a_k\}$ if $\mu=\mu_{a_1,\dots,a_k}$ and
$$
\mu'(p)=\varphi'(p(a_1,\dots,a_k)).
$$
Similarly, we denote the linear functional $\mu'$ by $\mu'_{a_1,\dots,a_k}.$

\begin{defn}
Let $(\cA, \phi, \phi')$ be an infinitesimal probability space, unital subalgebras $(\cA_i)_{i\in I}$ of $\cA$ are \textit{infinitesimally freely independent} (or \textit{infinitesimally free} for short) with respect to $(\varphi,\varphi')$ if whenever $a_1,\dots, a_n\in\cA$ are such that   
$a_k\in\cA_{i_k},i_1,\dots,i_n\in I,i_1\neq \cdots \neq i_n$ and $\varphi(a_j)=0$ for all $j$, then we have 
\begin{eqnarray*}
    \varphi(a_1\cdots a_n)&=&0 \\
    \varphi'(a_1\cdots a_n)&=&\sum_{k=1}^n\varphi'(a_k)\varphi(a_1\cdots a_{k-1}a_{k+1}\cdots a_n). 
\end{eqnarray*}
\end{defn}

\begin{nota}
Let $(\cA, \phi, \phi')$ be an infinitesimal probability space and $a \in \cA$. Let $m_n'(a)=\varphi'(a^n)$ be the infinitesimal moments of $a$. 
Let $\{f_n:\cA^n\to\mathbb{C}\}_{n\geq 1}$ and $\{f_n':\cA^n\to\mathbb{C}\}_{n\geq 1}$ be two sequences of multi-linear functionals. Given a partition $\pi\in NC(n)$, then for a fixed $V\in \pi$, we define $\partial f_{\pi,V}$ be the map that is equal to $f_{\pi}$ except for the block $V$, where we replace $f_{|V|}$ by $f'_{|V|}$. Then we define $\partial f_{\pi}$ by
$$
\partial f_{\pi}(a_1,\dots,a_n)= \sum_{V\in \pi}\partial f_{\pi,V}(a_1,\dots,a_n).
$$
\end{nota}

\begin{defn}
Suppose $(\cA,\varphi,\varphi')$ is an infinitesimal probability space, the \textit{infinitesimal free cumulants} $\{\kappa_n':\cA^n\to\mathbb{C}\}$ are defined inductively in terms of moments and infinitesimal moments via  
$$
\varphi'(a_1,\dots,a_n)=\sum_{\pi\in NC(n)}\partial \kappa_{\pi}(a_1,\dots,a_n). 
$$
\end{defn}

Note that the infinitesimal free cumulants can also be described via 
$$
\kappa_n'(a_1,\dots,a_n)=\sum_{\pi\in NC(n)}\mu(\pi,1_n)\partial \varphi_{\pi}(a_1,\dots,a_n)
$$
(see \cite{FN,M}).
The infinitesimal free cumulants can be used to characterize infinitesimal freeness. 
\begin{thm}\cite[Thm.~1.2]{FN}
Suppose that $(\cA,\varphi,\varphi')$ is an infinitesimal probability space, and $\cA_i$ is a unital subalgebra of $\cA$ for each $i\in I$. Then $(\cA_i)_{i\in I}$ are infinitesimally free if and only if for each $s\geq 2$ and $i_1,\dots, i_s\in I$ which are not all equal, and for $a_1\in \cA_{i_1},\dots,a_s\in \cA_{i_s}$, we have $\kappa_n(a_1,\dots,a_s)=\kappa_n'(a_1,\dots,a_s)=0.$
\end{thm}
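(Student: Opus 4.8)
The plan is to reduce the statement to the classical vanishing-of-mixed-cumulants theorem \cite[Thm.~11.16]{NS} and its operator-valued analogue by working over the ring of dual numbers $T = \bC[\epsilon]/(\epsilon^2)$; this is the mechanism behind realizing infinitesimal freeness as freeness with amalgamation over a commutative algebra \cite{TS22}. Form the $T$-algebra $\cB = \cA\otimes_\bC T$, regard $T\cong 1\otimes T$ as a subalgebra of $\cB$, and define the conditional expectation $E\colon\cB\to T$ as the $T$-linear extension of $a\mapsto \phi(a)+\epsilon\phi'(a)$; then $(\cB,E)$ is a $T$-valued probability space and $\cB_i := \cA_i\otimes_\bC T$ are $T$-subalgebras containing $T$. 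The argument then breaks into three parts: (i) $(\cA_i)_{i\in I}$ are infinitesimally free with respect to $(\phi,\phi')$ if and only if $(\cB_i)_{i\in I}$ are free over $T$ with respect to $E$; (ii) by the operator-valued cumulant characterization of freeness (the analogue of \cite[Thm.~11.16]{NS}; see \cite[Ch.~9]{MS}), which applies verbatim since $T$ is a unital commutative ring, the latter holds iff all mixed $T$-valued free cumulants $\kappa^E_s$ of the $\cB_i$ vanish; (iii) on $\cA$-arguments one has $\kappa^E_s(a_1,\dots,a_s) = \kappa_s(a_1,\dots,a_s) + \epsilon\,\kappa'_s(a_1,\dots,a_s)$, so vanishing of the mixed $\kappa^E_s$ is equivalent to vanishing of the mixed $\kappa_s$ together with the mixed $\kappa'_s$.

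The heart of the matter is step (i), and the one thing to watch is that the two notions of freeness normalize differently: infinitesimal freeness only requires $\phi(a_j)=0$, whereas $T$-valued freeness asks that alternating products of $E$-centered elements be killed by $E$. For the ``if'' direction I would start from $a_j\in\cA_{i_j}$ with $i_1\neq\cdots\neq i_n$ and $\phi(a_j)=0$, put $a_j' := a_j - \epsilon\phi'(a_j)1 \in \cB_{i_j}$ so that $E(a_j')=\phi(a_j)=0$, and expand $a_1\cdots a_n = a_1'\cdots a_n' + \epsilon\sum_k \phi'(a_k)\,a_1'\cdots\widehat{a_k'}\cdots a_n'$ modulo $\epsilon^2$; applying $E$, the first term dies by $T$-freeness, and the $k$-th term of the sum — already a multiple of $\epsilon$ — contributes $\epsilon\,\phi'(a_k)\,\phi(a_1\cdots\widehat{a_k}\cdots a_n)$, so reading off the $\epsilon^0$ and $\epsilon^1$ coefficients gives exactly the two defining identities of infinitesimal freeness. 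The ``only if'' direction is the mirror image: write a generic $E$-centered $b_k = x_k + \epsilon y_k \in \cB_{i_k}$, so that $\phi(x_k)=0$ and $\phi(y_k) = -\phi'(x_k)$; expand $b_1\cdots b_n$ modulo $\epsilon^2$; use $\phi$-freeness (which is the first half of infinitesimal freeness) to kill $\phi(x_1\cdots x_n)$ and each $\phi(x_1\cdots y_k^\circ\cdots x_n)$, where $y_k^\circ = y_k-\phi(y_k)1\in\cA_{i_k}$ — here the deleted-letter case $i_{k-1}=i_{k+1}$ causes no trouble since that factor is never claimed to vanish — and use the infinitesimal identity for $\phi'(x_1\cdots x_n)$; the two surviving sums then cancel and $E(b_1\cdots b_n)=0$.

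Step (iii) is a uniqueness statement. Setting $\widetilde\phi := \phi + \epsilon\phi'$ and $\widetilde\kappa_n := \kappa_n + \epsilon\kappa'_n$, the $\epsilon$-expansion of $\phi(a_1\cdots a_n)=\sum_{\pi\in NC(n)}\kappa_\pi$ together with the defining relation $\phi'(a_1\cdots a_n)=\sum_{\pi\in NC(n)}\partial\kappa_\pi$ shows that $\widetilde\phi$ and $\widetilde\kappa$ satisfy the $NC(n)$ moment--cumulant relations over $T$; commutativity of $T$ collapses the nested block sums defining $\kappa^E_\pi$ to ordinary products $\prod_{V\in\pi}\kappa^E_{|V|}$, exactly as in the scalar case, and Möbius inversion on $NC(n)$ over the commutative ring $T$ then forces $\widetilde\kappa_n$ to be the restriction of $\kappa^E_n$ to $\cA$-arguments. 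Comparing $\epsilon$-coefficients identifies the $\epsilon^0$ part with $\kappa_n$ and, via the quoted formula $\kappa'_n=\sum_{\pi}\mu(\pi,1_n)\partial\phi_\pi$, the $\epsilon^1$ part with $\kappa'_n$.

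The main obstacle I anticipate is purely the bookkeeping of step (i): keeping the $\phi$-centering and the $E$-centering straight and verifying that the terms surviving modulo $\epsilon^2$ cancel in precisely the pattern prescribed by the definition of infinitesimal freeness. Everything else is an invocation of established theorems once the dual-numbers dictionary is in place. A more self-contained alternative, closer to the original proof in \cite{FN}, would avoid operator-valued language and instead run the classical proof of \cite[Thm.~11.16]{NS} with each functional $\phi_\pi$ replaced by the pair $(\phi_\pi,\partial\phi_\pi)$ and each Möbius step done over $T$; this is combinatorially heavier but uses nothing beyond the material in the excerpt.
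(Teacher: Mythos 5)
Your proposal is correct, but note that the paper itself gives no proof of this statement: it is quoted verbatim from F\'evrier--Nica \cite[Thm.~1.2]{FN}, whose original argument is the direct combinatorial one you sketch at the very end (running the proof of \cite[Thm.~11.16]{NS} with $\phi_\pi$ replaced by the pair $(\phi_\pi,\partial\phi_\pi)$ and inverting over the upper-triangular/dual-number ring). Your main route — embedding everything into $\cB=\cA\otimes_\bC T$ with $T=\bC[\epsilon]/(\epsilon^2)$, identifying infinitesimal freeness with freeness with amalgamation over $T$, and then invoking the operator-valued vanishing-of-mixed-cumulants theorem — is a genuinely different and valid path; it is exactly the philosophy of \cite{TS22} and of the paper's own $\widetilde{\cA},\widetilde{\bC}$ construction used for the Boolean analogue (Theorem \ref{thm:Upper-Boolean In}). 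The centering bookkeeping in your step (i) is right in both directions (in the ``only if'' direction the constraint $\phi(y_k)=-\phi'(x_k)$ is what makes the two $\epsilon$-level sums cancel, and you correctly note that the deleted-letter factor need not vanish), and step (iii) works because the cumulant values lie in the central commutative ring $T$, so the nested operator-valued $\kappa^E_\pi$ factorizes over blocks and the triangular moment--cumulant recursion has a unique solution over $T$, forcing $\kappa^E_n|_{\cA}=\kappa_n+\epsilon\kappa_n'$. Two small points you should make explicit if you write this up: the operator-valued characterization of freeness by vanishing mixed cumulants must be taken in its purely algebraic form (Speicher's amalgamated theory, cf.\ \cite[Ch.~9]{MS}), since $T$ carries no positivity; and passing between vanishing of mixed $\kappa^E_s$ on $\cA_i$-arguments and on $\cB_i$-arguments uses $T$-multilinearity of $\kappa^E_s$ (trivial, but it is the hinge between the hypothesis of the theorem and $T$-freeness). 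What your route buys is reuse of established operator-valued machinery and a conceptual explanation of why the scalar combinatorics transfers; what the F\'evrier--Nica route buys is self-containedness, using nothing beyond $NC(n)$ M\"obius inversion.
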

For a given $a\in \cA$, we define the \textit{infinitesimal Cauchy transform }of $a$ by 
$$
g_a(z)=\sum_{n=1}^{\infty}\frac{m_n'(a)}{z^{n+1}},
$$
and the \textit{infinitesimal $r$-transform }of $a$ is defined by 
$$
r_a(z) = \sum_{n=0}^{\infty}\kappa'_{n+1}(a)z^{n} \text{ where }\kappa'_n(a)=\kappa'_n(a,\dots,a). 
$$
Following \cite{M}, the infinitesimal Cauchy and $R$-transforms are related by the equations 
\begin{equation}\label{eq:inf_cauchy_r_relation}
r_a(z)=-g_a\left(G^{\langle -1\rangle}_a(z)\right)(G^{\langle -1 \rangle}_a)'(z)  \text{ and } g_a(z)=-r_a(G_a(z))G_a'(z)
\end{equation}
where $G^{\langle -1\rangle}_a$ is the composition inverse of $G_a$.

\subsection{Applications to Random Matrices}
In \cite[\S4]{BHSakuma18} Collins, Hasebe and Sakuma proved asymptotic infinitesimal freeness for some random matrix ensembles, using the language of cyclic monotone independence. In particular they considered two ensembles of $N \times N$ matrices $\{A_{1,N}, \dots, A_{k,N}\}$ and $\{B_{1,N}, \dots, B_{l,N}\}$.  Let $\phi_\sN = \rE \comp \tr$ and $\phi'_\sN = \rE \comp \Tr$ where $\Tr$ is the un-normalized trace and $\tr = N^{-1} \Tr$ is the normalized trace. They assumed that there was a limit joint  distribution for the $B_{i,N}$'s with respect to $\phi_\sN$ and a limit joint distribution of the $A_{j,N}$'s with respect to $\phi'_\sN$. More precisely that for all $i_1, \dots, i_m \in [l]$ the limit $\lim_N \phi_\sN(B_{i_1,N} \cdots B_{i_m,N})$ exists, and for all $j_1, \dots, j_n \in [k]$ the limit $\lim_N \phi'_\sN(A_{j_1,N} \cdots A_{j_n,N})$ exists. 

Then in  \cite[Thm.~4.1]{BHSakuma18}, they showed that when $\{U_N\}_N$ is a Haar unitary independent from $\{A_{1,N}, \dots,\ab A_{k,N}, \ab B_{1,N}, \dots, B_{l,N}\}$ then the ensembles $\{U_N B_{1,N} U_N^*, \dots, \ab U_N B_{l,N} U_N^*\}$ and $\{A_{1,N}, \dots, \ab A_{k,N}\}$ are asymptotically infinitesimally free with respect to $(\phi, \phi')$. The special case when the $A_{i,N}$'s are stationary was already presented by Shlyakhtenko in \cite{shly}. In both these models the limit operators of $\{A_{1,N}, \dots, A_{k,N}\}$ are infinitesimal operators in the sense of Definition \ref{def:infinitesimal}, so the limit distributions of this model are covered by our theory. In addition, the results of \cite[Thm.~5.1]{BHSakuma18} are spectral results, i.e. about the support of the distribution, and do not give information about the distributions themselves. 

\subsection{Boolean Independence}\label{sec:boolean independence}

Here we first review some basic concepts of Boolean independence, introduced in \cite{SW97}. 

For a given non-commutative probability space $(\cA,\phi)$, (non-unital) subalgebras $(\cA_i)_{i\in I}$ are \emph{Bool\-ean independent} if for all $n\geq 0$, $a_1,\dots,a_n\in\cA$ are such that $a_j\in \cA_{i_j}$ where $i_1,\dots,i_n\in I$, $i_1\neq \cdots \neq i_n$, then 
\begin{equation}\label{eq:boolean_independence}
\phi(a_1\cdots a_n)=\phi(a_1)\cdots \phi(a_n).
\end{equation}
The elements $\{a_i\in\mathcal{A}\mid i\in I\}$ are Boolean independent if the subalgebras $\alg(a_i)$ generated by $a_i\ (i\in I)$ are Boolean independent. 

For $n\geq 1$, a partition $\pi$ is an \emph{interval partition} if each block of $\pi$ is an interval of the form $\{i, i+1, \dots, i+s\}$ for some $1\leq i\leq n$ and $0\leq s\leq n-i$. $I(n)$ denotes the set of all interval partitions. Then the \emph{Boolean cumulants} $\{\beta_n:\cA^n\to\bC\}_{n\geq 1}$ are defined inductively in terms of moments via
\begin{equation}\label{eq:boolean cumulant}
\phi(a_1\cdots a_n)=\sum_{\pi\in I(n)}\beta_{\pi}(a_1,\dots,a_n).
\end{equation}
As in the free case, Boolean cumulants characterize Boolean independence. To be precise, $(\cA_i)_{i\in I}$ are Boolean independent if and only if for all $n\geq 2$ and for all $a_1, \dots, a_n  \in \cup_i \cA_i$, we have $\beta_n(a_1,\dots,a_n)=0$, whenever there are at least two elements from different subalgebras; i.e. there exists $1\leq k, l \leq n$ with $i_k\neq i_l$ and $a_k \in \cA_{i_k}$ and $a_l \in \cA_{i_l}$.

The relation between moments and Boolean cumulants can also be understood via the moment-generating function and $\eta$-transform. To be precise, for a given $a\in\mathcal{A}$, the \emph{moment generating function} $\psi_a$ and the \emph{$\eta$-transform} $\eta_a$ are defined by
$$
\psi_a(z)=\sum_{n=1}^\infty m_n(a)z^n
\text{ and }
\eta_a(z)=\sum_{n=1}^\infty \beta_n(a)z^n 
$$
where $\beta_n(a)=\beta_n(a,\dots,a)$. Then we have 
\begin{equation}\label{eq:eta_psi_relation}
\eta_a (z)(1+\psi_a(z))=\psi_a(z).
\end{equation}

\subsection{Infinitesimal Boolean Independence}

The notion of infinitesimal Boolean independence was introduced in \cite{PT21}. We recall the definition of it as follows.

\begin{defn}
Suppose that $(\cA,\varphi,\varphi')$ is an infinitesimal probability space. Non-unital subalgebras $(\cA_i)_{i\in I}$ are infinitesimally Boolean independent if for all $n\geq 0, a_1,\dots,a_n\in \cA$ are such that $a_j\in \cA_{i_j}$ where $i_1,\dots,i_n\in I$, $i_1\neq \cdots \neq i_n$, then 
\begin{eqnarray*}
    \varphi(a_1\cdots a_n)&=&\phi(a_1)\cdots \phi(a_n); \\
    \varphi'(a_1\cdots a_n)&=&\sum_{j=1}^n\phi(a_1)\cdots \phi(a_{j-1})\phi'(a_j)\phi(a_{j+1})\cdots \phi(a_n).
\end{eqnarray*}
\end{defn}

Note that the infinitesimal Boolean independence can be understood as  Boolean independence with amalgamation. 
To be precise, for a given infinitesimal probability space $(\cA,\phi,\phi')$, we define 
$$
\widetilde{\cA}=\Big\{\begin{bmatrix}
 a & a' \\ 0 & a   
\end{bmatrix}  \Big | a,a'\in\mathcal{A}  \Big\},\  \widetilde{\mathbb{C}}=\Big\{\begin{bmatrix}
 c & c' \\ 0 & c   
\end{bmatrix}  \Big | c,c'\in\mathbb{C}  \Big\}
$$
and then  $\widetilde{\phi}:\widetilde{\cA}\to\widetilde{\mathbb{C}}$ is defined by
$$\widetilde{\phi}\Big(\begin{bmatrix}
 a & a' \\ 0 & a   
\end{bmatrix}\Big)=\begin{bmatrix}
    \phi(a) & \phi'(a)+\phi(a') \\ 0 & \phi(a)
\end{bmatrix}.
$$
Note that $(\widetilde{\cA},\widetilde{\mathbb{C}},\widetilde{\phi})$ forms an operator-valued probability space (see \cite{TS22}).
Moreover, in \cite{PT21}, the authors showed the following result.
\begin{thm}\label{thm:Upper-Boolean In}
Non-unital subalgebras $(\mathcal{A}_i)_{i\in I}$ are infinitesimally Boolean independent with respect to $(\phi,\phi')$ if and only if $(\widetilde{\cA}_{i})_{i\in I}$ are Boolean independent with respect to $\widetilde{\phi}$. 
\end{thm}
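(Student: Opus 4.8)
The plan is to transport the statement into the $2\times 2$ upper-triangular picture and compare matrix entries. The key computation is the ``dual number'' multiplication rule: if $\widetilde a_j=\begin{bmatrix} a_j & a_j'\\ 0 & a_j\end{bmatrix}\in\widetilde{\cA}_{i_j}$, i.e.\ $a_j,a_j'\in\cA_{i_j}$, then by the Leibniz rule
\[
\widetilde a_1\cdots\widetilde a_n=\begin{bmatrix} a_1\cdots a_n & \displaystyle\sum_{k=1}^n a_1\cdots a_{k-1}\,a_k'\,a_{k+1}\cdots a_n\\ 0 & a_1\cdots a_n\end{bmatrix}.
\]
Applying $\widetilde\phi$, the $(1,1)$- and $(2,2)$-entries of $\widetilde\phi(\widetilde a_1\cdots\widetilde a_n)$ both equal $\phi(a_1\cdots a_n)$, while its $(1,2)$-entry is $\phi'(a_1\cdots a_n)+\sum_{k}\phi(a_1\cdots a_{k-1}a_k'a_{k+1}\cdots a_n)$; and $\widetilde\phi(\widetilde a_j)$ is upper-triangular with diagonal entry $\phi(a_j)$ and upper-right entry $\phi'(a_j)+\phi(a_j')$, so $\prod_{j=1}^n\widetilde\phi(\widetilde a_j)$ has $(1,1)$-entry $\prod_j\phi(a_j)$ and $(1,2)$-entry $\sum_{j}\phi(a_1)\cdots\phi(a_{j-1})(\phi'(a_j)+\phi(a_j'))\phi(a_{j+1})\cdots\phi(a_n)$. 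Therefore, for an alternating word, the operator-valued identity $\widetilde\phi(\widetilde a_1\cdots\widetilde a_n)=\prod_j\widetilde\phi(\widetilde a_j)$ is \emph{equivalent} to the pair of scalar identities obtained from the $(1,1)$- and $(1,2)$-entries. I would open the proof by recording this, plus the routine remarks that each $\widetilde{\cA}_i$ is a subalgebra of $\widetilde{\cA}$ (closure under the product uses that $\cA_i$ is an algebra) and a $\widetilde{\bC}$-sub-bimodule, so that the operator-valued Boolean independence of \cite{TS22} is meaningful.

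For ``$\Rightarrow$'', assume $(\cA_i)_{i\in I}$ are infinitesimally Boolean independent and let $\widetilde a_j\in\widetilde{\cA}_{i_j}$ with $i_1\neq\cdots\neq i_n$. The $(1,1)$-identity $\phi(a_1\cdots a_n)=\prod_j\phi(a_j)$ is exactly the first defining relation of infinitesimal Boolean independence. For the $(1,2)$-identity, I would expand $\phi'(a_1\cdots a_n)$ by the second defining relation, expand each $\phi(a_1\cdots a_{k-1}a_k'a_{k+1}\cdots a_n)$ by ordinary Boolean independence---legitimate because substituting $a_k'$ for $a_k$ keeps the index pattern alternating---and then collect the two contributions at slot $j$; they combine into $\sum_j\phi(a_1)\cdots(\phi'(a_j)+\phi(a_j'))\cdots\phi(a_n)$, which is exactly the $(1,2)$-entry of $\prod_j\widetilde\phi(\widetilde a_j)$. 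For ``$\Leftarrow$'', assume $(\widetilde{\cA}_i)_{i\in I}$ are Boolean independent with respect to $\widetilde\phi$, and given $a_j\in\cA_{i_j}$ with $i_1\neq\cdots\neq i_n$ I would apply the factorization to the diagonal elements $\widetilde a_j=\begin{bmatrix} a_j & 0\\ 0 & a_j\end{bmatrix}\in\widetilde{\cA}_{i_j}$, the case $a_j'=0$. Reading off the $(1,1)$-entry of $\widetilde\phi(\widetilde a_1\cdots\widetilde a_n)=\prod_j\widetilde\phi(\widetilde a_j)$ yields $\phi(a_1\cdots a_n)=\phi(a_1)\cdots\phi(a_n)$, and the $(1,2)$-entry yields $\phi'(a_1\cdots a_n)=\sum_j\phi(a_1)\cdots\phi(a_{j-1})\phi'(a_j)\phi(a_{j+1})\cdots\phi(a_n)$; these are precisely the two defining relations of infinitesimal Boolean independence.

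I do not expect a serious obstacle: the substance is the bookkeeping of the $(1,2)$-entry in the ``$\Rightarrow$'' direction, where one must note that ordinary Boolean independence is available (it is literally the first half of the infinitesimal notion), apply it to the Leibniz words with $a_k'$ in slot $k$, and verify that the sum from $\phi'(a_1\cdots a_n)$ and the $n$ sums from the $\phi$'s of the Leibniz terms recombine termwise into one sum carrying $\phi'(a_j)+\phi(a_j')$ in slot $j$. The one point worth stating carefully is the equivalence in the first paragraph, since it is what reduces an operator-valued statement to two classical scalar ones.
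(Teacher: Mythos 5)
Your proof is correct. Note that the paper does not actually prove Theorem \ref{thm:Upper-Boolean In} itself --- it quotes the result from \cite{PT21} --- so there is no in-paper argument to compare against; your entrywise verification (the Leibniz rule identifying the $(1,1)$- and $(1,2)$-entries, ordinary Boolean factorization applied to the words with $a_k'$ in slot $k$ for ``$\Rightarrow$'', and the diagonal choice $a_j'=0$ for ``$\Leftarrow$'') is complete and is essentially the natural computation underlying the operator-valued construction of \cite{PT21,TS22}.
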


\subsection{Monotone Independence \cite{muraki}}

Suppose that $(\cA,\phi)$ is a non-commutative probability space. Let $\cB$ be a non-unital subalgebra and $\mathcal{C}$ be a (non-unital) subalgebra of $\cA$. $(\cB,\mathcal{C})$ are \emph{monotone independent} if whenever $a_j\in \cB\cup \mathcal{C}$ for all $j$, and $a_{k-1}\in\cB$, $a_{k}\in \mathcal{C}$, and $a_{k+1}\in\cB$ for some $k$, one has
\begin{eqnarray}
\phi(a_1\cdots a_n) &=& \phi(a_1\cdots a_{k-1}\phi(a_k)a_{k+1}\cdots a_n)
\end{eqnarray}
where one of the inequalities is eliminated if $k=1$ or $k=n.$ We denote it by $\cB \prec \mathcal{C}$.


\section{The Infinitesimal Distribution of Some Polynomials}\label{sec:some polynomials}

\begin{defn}\label{def:infinitesimal}
Let $(\cA, \phi)$ be a non-commutative probability space. If $a \in \cA$ is such that $\phi(a^n) = 0$ for all $n \geq 1$, we say that $a$ is an \textit{infinitesimal operator}.
\end{defn}

We observe that if $a_1$ and $a_2$ are free elements and $a_1$ is infinitesimal, then 
\begin{eqnarray*}
\varphi(a_1a_2a_1a_2)&=& \varphi(a_1^2)\varphi(a_2)^2+\varphi(a_2^2)\varphi(a_1)^2-\varphi(a_1)^2\varphi(a_2)^2 =0
\end{eqnarray*}
since $\varphi(a_1)=\varphi(a_1)^2=0$. In fact, we can obtain general result as the following Lemma.
\begin{lem}
Suppose $a_1$ and $a_2$ are free elements in a ncps $(\mathcal{A},\varphi)$ that $a_1$ is infinitesimal. Then 
any polynomial in $a_1$ and $a_2$ is infinitesimal, provided each term contains at least one $a_1$. In particular, $a_1a_2 + a_2a_1$ and $i(a_1a_2 - a_2a_1)$ will be infinitesimal. 
\end{lem}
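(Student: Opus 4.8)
The plan is to reduce everything to the claim that $\varphi(w)=0$ for every word $w$ in $a_1,a_2$ in which the letter $a_1$ occurs at least once. This is enough: if $p$ is a polynomial each of whose monomials contains at least one $a_1$, then for each $n\ge 1$ the power $p^n$ is a linear combination of words each of which contains at least $n$ (hence at least one) copies of $a_1$; the claim together with linearity of $\varphi$ then gives $\varphi(p^n)=0$ for all $n\ge 1$, which is precisely the assertion that $p$ is infinitesimal.

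To prove the claim I would first record that the hypothesis ``$a_1$ is infinitesimal'' is equivalent to the vanishing of all of its free cumulants: from $\kappa_m(a_1,\dots,a_1)=\sum_{\pi\in NC(m)}\mu(\pi,1_m)\prod_{V\in\pi}\varphi(a_1^{|V|})$ one sees that $\varphi(a_1^j)=0$ for all $j\ge 1$ forces $\kappa_m(a_1,\dots,a_1)=0$ for all $m\ge 1$ (only this direction is needed). Now write a word as $w=x_1\cdots x_k$ with each $x_j\in\{a_1,a_2\}$, and let $S\subseteq[k]$ be the nonempty set of positions carrying an $a_1$. Expanding $\varphi(w)=\sum_{\pi\in NC(k)}\kappa_\pi(x_1,\dots,x_k)$ and applying the vanishing of mixed cumulants to the free pair $a_1,a_2$, the only partitions $\pi$ that contribute are those each of whose blocks lies entirely in $S$ or entirely in $[k]\setminus S$. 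For such a $\pi$ the set $S$ is a union of blocks, so $\pi$ has at least one block $V\subseteq S$; the corresponding factor is $\kappa_{|V|}(a_1,\dots,a_1)=0$ by the previous sentence, hence $\kappa_\pi(x_1,\dots,x_k)=0$. Therefore $\varphi(w)=0$, completing the proof.

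The step that genuinely uses the hypothesis is the equivalence in the second paragraph: what matters is that every power $a_1^j$ is centered, not merely $a_1$ itself — otherwise a block $V\subseteq S$ of size $\ge 2$ could contribute, and indeed $\varphi(a_1a_2a_1)=\varphi(a_2)\varphi(a_1^2)+\cdots$ need not vanish when $\varphi(a_1^2)\neq 0$. Everything else is routine freeness bookkeeping and requires no real work. An equivalent, cumulant-free route to the claim: in $w$ replace each occurring power $a_2^{k}$ by $(a_2^{k}-\varphi(a_2^{k})1)+\varphi(a_2^{k})1$, use that each power of $a_1$ is already centered, and expand; every resulting term is a scalar times $\varphi$ of an alternating product of centered elements that still involves at least one factor equal to a power of $a_1$, hence is $0$ by the definition of freeness (or, for a length-one leftover, by infinitesimality of $a_1$).
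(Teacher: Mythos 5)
Your proposal is correct and follows essentially the same route as the paper: reduce by linearity to a single word containing at least one $a_1$, expand $\varphi$ of that word by freeness, and observe that every term carries a vanishing $a_1$-factor. The only (harmless) difference is that you make the expansion precise via the moment--cumulant formula, vanishing of mixed cumulants, and the vanishing of all free cumulants $\kappa_m(a_1,\dots,a_1)$, whereas the paper expands directly into products of moments of $a_1$ and $a_2$; your version just spells out the bookkeeping the paper leaves implicit.
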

\begin{proof}
By linearity, it suffices to show that 
any word $a_{i_1} \cdots a_{i_n}$ is also infinitesimal where there is at least one $j$ with $i_j = 1$. Note that $\phi((a_{i_1} \cdots a_{i_n})^m)$ may be expanded, by freeness, into a sum of products of moments of $a_1$ and moments of $a_2$. Then every term in this sum will be $0$ because $a_1$ is infinitesimal. 
\end{proof}

\begin{rem}\label{rem:inf_ideal}
With this freeness assumption, infinitesimal elements form a weak kind of ideal in $\cA$. In \cite{BHSakuma18}, the infinitesimal elements were trace class operators, which form an ideal in the usual sense.

Going a little further, if we assume that $a_1$ and $a_2$ are infinitesimally free and both infinitesimal then all mixed moments $\phi'(a_{i_1} \cdots a_{i_n})$ vanish whenever $i_1, \dots, i_n$ are not all equal. We get this by writing $\phi'(a_{i_1} \cdots a_{i_n})$ as a sum of cumulants $\partial\kappa_\pi(a_{i_1}, \dots, a_{i_n})$ with $\pi \in NC(n)$, and then observing that there can be only one $\kappa'$ in $\partial\kappa_\pi(a_{i_1}, \dots, a_{i_n})$ and all other $\kappa$'s vanish.
\end{rem}

\begin{prop}\label{prop:inf operator}
Let $a$ be an infinitesimal operator, then we have $r_a(z)=z^{-2}g_a(z^{-1})$. Furthermore, $m_n'(a)=\kappa_n'(a)$ for all $n\geq 1.$ 
\end{prop}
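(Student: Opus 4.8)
The claim has two parts, and the second is the easy corollary of the first. For the first part, the plan is to start from the moment--cumulant relations in the infinitesimal setting. Since $a$ is infinitesimal, $m_n(a) = \varphi(a^n) = 0$ for all $n \geq 1$, hence $G_a(z) = 1/z$ and so $G_a^{\langle -1\rangle}(z) = 1/z$ as well. Plugging this into the first relation of \eqref{eq:inf_cauchy_r_relation}, namely $r_a(z) = -g_a(G_a^{\langle -1\rangle}(z))\,(G_a^{\langle -1\rangle})'(z)$, and using $(G_a^{\langle -1\rangle})'(z) = -z^{-2}$, gives $r_a(z) = -g_a(z^{-1})\cdot(-z^{-2}) = z^{-2} g_a(z^{-1})$, which is exactly the stated identity. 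So the whole first assertion is just a substitution into the transform formula from \cite{M} that is quoted in the Preliminaries.

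For the second part, I would simply read off the coefficients. Writing $g_a(w) = \sum_{n\geq 1} m_n'(a)\, w^{n+1}$ and substituting $w = z^{-1}$ gives $g_a(z^{-1}) = \sum_{n\geq 1} m_n'(a)\, z^{-(n+1)}$, so $z^{-2} g_a(z^{-1}) = \sum_{n\geq 1} m_n'(a)\, z^{-(n+1)-2+2}$... let me recompute: $z^{-2} \cdot z^{-(n+1)} = z^{-(n+3)}$, which does not obviously line up with $r_a(z) = \sum_{n\geq 0}\kappa'_{n+1}(a)\, z^n$. The correct bookkeeping is to note $g_a(z) = \sum_{n\geq 1} m_n'(a) z^{-(n+1)}$ in the variable $z$ (a Laurent series at infinity), so $g_a(z^{-1}) = \sum_{n\geq 1} m_n'(a) z^{n+1}$ and $z^{-2} g_a(z^{-1}) = \sum_{n\geq 1} m_n'(a) z^{n-1} = \sum_{k\geq 0} m_{k+1}'(a) z^{k}$. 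Comparing with $r_a(z) = \sum_{k\geq 0} \kappa'_{k+1}(a) z^k$ yields $m_{k+1}'(a) = \kappa'_{k+1}(a)$ for all $k\geq 0$, i.e. $m_n'(a) = \kappa_n'(a)$ for all $n\geq 1$.

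The only genuine subtlety — and the step I expect to require the most care — is making sure the formal-power-series manipulations of $G_a$, $G_a^{\langle -1\rangle}$, $g_a$, and $r_a$ are carried out consistently, since these objects live naturally as Laurent series at $\infty$ (for $G_a$, $g_a$) versus ordinary power series at $0$ (for $R_a$, $r_a$), and the relation \eqref{eq:inf_cauchy_r_relation} implicitly mediates this change of variable. Once the degenerate case $G_a(z) = 1/z$ is substituted, everything collapses, but one should double-check that $G_a^{\langle -1\rangle}$ is legitimately defined here and equals $1/z$ (it is, since composition inverse of $z \mapsto 1/z$ is itself). Alternatively, one could avoid transforms entirely and argue combinatorially: from $m_n'(a) = \sum_{\pi \in NC(n)} \partial\kappa_\pi(a,\dots,a)$, every term with a nontrivial partition contains at least one ordinary cumulant block $\kappa_{|V|}(a,\dots,a)$, and since $a$ is infinitesimal all its ordinary free cumulants vanish (as $\kappa_n(a) = \sum_{\pi}\mu(\pi,1_n)\varphi_\pi(a,\dots,a) = 0$ because all moments vanish); hence only $\pi = 1_n$ survives, giving $m_n'(a) = \kappa_n'(a)$ directly. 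I would likely present the transform argument for the first identity and the combinatorial observation as the cleanest route to the second.
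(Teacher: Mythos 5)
Your proposal is correct and follows essentially the same route as the paper: observe $G_a(z)=z^{-1}=G_a^{\langle -1\rangle}(z)$, substitute into the relation $r_a(z)=-g_a\bigl(G_a^{\langle -1\rangle}(z)\bigr)(G_a^{\langle -1\rangle})'(z)$, and compare coefficients to get $m_n'(a)=\kappa_n'(a)$. Your alternative combinatorial argument (only $\pi=1_n$ survives in $\sum_{\pi\in NC(n)}\partial\kappa_\pi$ because all ordinary cumulants of $a$ vanish) is also valid, but it is an optional bonus rather than a different proof of the stated identity.
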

\begin{proof}
We observe that $\varphi(a^n)=0$ implies that $G_a(z)=z^{-1}$ and then $G_a^{\langle -1\rangle}(z)=z^{-1}.$ Then we have
$$
r_a(z)=-g_a\left(G^{\langle -1\rangle}_a(z)\right)(G^{\langle -1 \rangle}_a)'(z) =\frac{1}{z^2}g_a(\frac{1}{z}), 
$$
which implies that 
$$
\sum_{n=1}^\infty \kappa'_n(a)z^{n-1}=\frac{1}{z^2}\sum_{n=1}^\infty \frac{m_n'(a)}{(z^{-1})^{n+1}}.
$$
Thus, we obtain $\kappa_n'(a)=m_n'(a)$ for all $n\geq 1$.
\end{proof}

\subsection{The Anticommutator Case}

Let $p = x_1 x_2 + x_2 x_1$ where $x_1$ and $x_2$ are infinitesimally free and $x_2$ is infinitesimal. We shall find the infinitesimal moments of $p$ by tracking the corresponding $\varphi'(x_{i_1}\cdots x_{i_{2n}})$, and we summarize them in the following theorem. Note that this shows that $p$ is the sum of two infinitesimally freely independent copies of $x_2$, scaled by $\alpha$ and $\beta$ respectively. 

Before we state the theorem, let us introduce the following notation:
\begin{nota}
Given a string $(j_1,\dots,j_k)\in [n]^k$, this can be read as a map $j:[k]\to [n]$ by $j(l)=j_l$. Then $ker(j)$ is the partition on $[k]$ such that $r,s$ are in the same block of $ker(j)$ if and only if $j_r=j_s.$ For instance, $(j_1,j_2,j_3,j_4)=(1,2,1,2)$, then $ker(j)=\{(1,3),(2,4)\}.$
\end{nota}

\begin{thm}\label{Thm:anticommutator-moments}
Suppose that $x_1$ and $x_2$ are infinitesimally free with  $x_2$ an infinitesimal operator. Let $p = x_1x_2 + x_2x_1$ and $\alpha=m_1(x_1)+\sqrt{m_2(x_1)}$ and $\beta=m_1(x_1)-\sqrt{m_2(x_1)}$. Then $p$ is an infinitesimal operator with infinitesimal cumulants and moments 
\[
\kappa'_n(p) = m_n'(p)=\kappa_n'(\alpha x_2) + \kappa_n'(\beta x_2).
\]
Moreover, $g_{p}=g_{\alpha x_2}+g_{\beta x_2}$ and $r_{p}=r_{\alpha x_2}+r_{\beta x_2}$. 
\end{thm}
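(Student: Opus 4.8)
The plan is to collapse everything to the single identity $m_n'(p)=(\alpha^n+\beta^n)\,m_n'(x_2)$ and then read off the rest. Since infinitesimal freeness entails freeness, the Lemma preceding this proposition applies to $x_2x_1+x_1x_2=p$ and shows $p$ is infinitesimal; hence $G_p(z)=z^{-1}$ and Proposition~\ref{prop:inf operator} gives $\kappa_n'(p)=m_n'(p)$. Also $\alpha x_2$ and $\beta x_2$ are infinitesimal, so by Proposition~\ref{prop:inf operator} and multilinearity $\kappa_n'(\alpha x_2)=m_n'(\alpha x_2)=\alpha^n m_n'(x_2)$, and likewise with $\beta$. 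Granting $m_n'(p)=(\alpha^n+\beta^n)m_n'(x_2)$, all the displayed formulas follow at once: $\kappa_n'(p)=m_n'(p)=\alpha^n m_n'(x_2)+\beta^n m_n'(x_2)=\kappa_n'(\alpha x_2)+\kappa_n'(\beta x_2)$; summing $\kappa_n'(\cdot)z^{n-1}$ yields $r_p=r_{\alpha x_2}+r_{\beta x_2}$; and, since all operators here are infinitesimal, summing $m_n'(\cdot)z^{-n-1}$ yields $g_p=g_{\alpha x_2}+g_{\beta x_2}$. (This also realizes $p$, in distribution and in infinitesimal distribution, as $\alpha y_1+\beta y_2$ with $y_1,y_2$ infinitesimally free copies of $x_2$.)

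To prove the moment identity, expand $p^n$ into the $2^n$ words $w=x_{i_1}\cdots x_{i_{2n}}$ in which each consecutive pair $(x_{i_{2k-1}},x_{i_{2k}})$ equals $(x_1,x_2)$ or $(x_2,x_1)$, and write $\varphi'(w)=\sum_{\pi\in NC(2n)}\partial\kappa_\pi(x_{i_1},\dots,x_{i_{2n}})$. Two inputs trim this sum: by infinitesimal freeness all mixed $\kappa$ and $\kappa'$ vanish, so every block of a surviving $\pi$ is monochromatic; and since $x_2$ is infinitesimal, $\kappa_k(x_2,\dots,x_2)=0$ for all $k\ge1$, so a monochromatic $x_2$-block can survive only if it is the distinguished block carrying $\kappa'$. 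As there is at least one $x_2$ (here $n\ge1$), that block must be exactly $V_w:=\{\text{positions of }x_2\text{ in }w\}$, which has $n$ elements and contributes $\kappa_n'(x_2)=m_n'(x_2)$; the remaining $n$ positions (all $x_1$) are partitioned by some $\sigma$ with $\{V_w\}\cup\sigma\in NC(2n)$, contributing $\kappa_\sigma(x_1,\dots,x_1)$. Hence $\varphi'(w)=m_n'(x_2)\,F(V_w)$, where $F(V):=\sum_{\sigma:\,\{V\}\cup\sigma\in NC(2n)}\kappa_\sigma(x_1,\dots,x_1)$.

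Next I evaluate $F(V)$. Because $V$ is a single block, a block of $\sigma$ avoids crossing $V$ exactly when it lies in one of the regions cut out by $V$: the interior gaps $G_1,\dots,G_{n-1}$ (between consecutive elements of $V$) and the single exterior region $G_0\cup G_n$ (the positions below $\min V$ together with those above $\max V$); within each region $\sigma$ is an arbitrary non-crossing partition. Applying $\sum_{\sigma\in NC(k)}\kappa_\sigma(x_1,\dots,x_1)=m_k(x_1)$ to each region gives $F(V)=m_{|G_0|+|G_n|}(x_1)\prod_{i=1}^{n-1}m_{|G_i|}(x_1)$. Now record, for each pair of $w$, whether its $x_1$ sits on the left ($L$) or the right ($R$); this identifies $w$ with a pattern $e\in\{L,R\}^n$, and a short computation gives $|G_0|=[e_1=L]$, $|G_n|=[e_n=R]$, and $|G_i|=[e_i=R]+[e_{i+1}=L]$ for $1\le i\le n-1$ (Iverson brackets). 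Reading $|G_0|+|G_n|$ as one cyclic boundary,
\begin{align*}
m_n'(p)=\varphi'(p^n)=\sum_{w}\varphi'(w) &= m_n'(x_2)\sum_{e\in\{L,R\}^n}\ \prod_{i=1}^{n}m_{[e_i=R]+[e_{i+1}=L]}(x_1) \\
&= m_n'(x_2)\,\Tr(M^n), \qquad M=\begin{pmatrix}m_1(x_1)&1\\ m_2(x_1)&m_1(x_1)\end{pmatrix},
\end{align*}
where $e_{n+1}:=e_1$. The eigenvalues of $M$ are $m_1(x_1)\pm\sqrt{m_2(x_1)}=\alpha,\beta$, so $\Tr(M^n)=\alpha^n+\beta^n$ and the identity is proved.

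The step I expect to be the real work is the evaluation of $F(V)$: one must see that the two outermost gaps $G_0$ and $G_n$ merge into a single region — a naive ``one factor per gap'' guess is false, as the term $x_1x_2x_2x_1$ in $p^2$ already shows — and then choose the parametrization of the words so that this merging becomes exactly the cyclic (trace) closure of the transfer matrix $M$. Everything else is the bookkeeping of gap sizes and the observation that $M$ is the companion matrix of $t^2-2m_1(x_1)t+\bigl(m_1(x_1)^2-m_2(x_1)\bigr)$, whose roots are $\alpha$ and $\beta$.
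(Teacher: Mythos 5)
Your proof is correct, and its structural core coincides with the paper's: you expand $\phi'(p^n)$ over the $2^n$ words, apply the infinitesimal moment--cumulant formula, and use the vanishing of mixed (infinitesimal) cumulants together with $\kappa_k(x_2)=0$ to see that the only surviving partitions consist of the single $x_2$-block carrying $\kappa_n'(x_2)=m_n'(x_2)$ plus a non-crossing partition of the $x_1$-positions confined to the cyclic gaps, each gap of size $k$ contributing $m_k(x_1)$; since the gaps here have size $0$, $1$ or $2$, this is exactly the paper's per-word weight $\kappa_n'(x_2)\,m_1^{\,n-2v}m_2^{\,v}$, with your merged exterior region playing the role of the paper's ``cyclically adjacent'' pairs of $1$'s. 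Where you genuinely diverge is the final enumeration: the paper parametrizes the words by runs $f^{k_1}s^{k_2}\cdots$, counts compositions via $\binom{n-1}{r-1}$, uses cyclic invariance to restrict to strings starting with $f$, and closes with a binomial identity, whereas you encode the cyclic gap bookkeeping as $\Tr(M^n)$ for the transfer matrix $M=\begin{pmatrix} m_1 & 1\\ m_2 & m_1\end{pmatrix}$, whose eigenvalues are $\alpha,\beta$, so $(\alpha^n+\beta^n)m_n'(x_2)$ drops out with no further combinatorics. Your finish is cleaner and more robust: with the signs folded into the matrix it would also dispatch the commutator case of Theorem \ref{Thm:commutator-moments} uniformly, bypassing the alternating-sum Lemma \ref{lemma:combinatorial sum}, while the paper's run-counting has the modest advantage of exhibiting the explicit binomial structure of the moments. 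The remaining steps (infinitesimality of $p$ via the preceding Lemma, $\kappa_n'=m_n'$ for infinitesimal operators via Proposition \ref{prop:inf operator}, and the passage to $g$ and $r$) are handled the same way in both arguments.
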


\begin{proof}
We have 
\[
\phi'(p^n) =
\sum_{j_1, \dots, j_{2n} = 1}^2
\phi'(x_{j_1} x_{j_2} \cdots x_{j_{2n-1}} x_{j_{2n}})
=
\sum_{\pi \in NC(2n)} \sum_{j_1, \dots, j_{2n} = 1}^2 
\partial \kappa_\pi(x_{j_1}, x_{j_2}, \dots, x_{j_{2n-1}}, x_{j_{2n}})
\]
where the sum over $j_1, \dots, j_{2n}$ only includes those tuples such that for each $1 \leq l \leq n$ we have $(j_{2l-1}, j_{2l})$ is either $(1, 2)$ or $(2, 1)$. Then by our assumption on $x_1$ and $x_2$, $\partial \kappa_{\pi}(x_{j_1}, \cdots, x_{j_{2n}}) = 0$ unless 
\begin{itemize}
    \item[\mybullet] $\pi\leq \ker(j)$; (\textit{by infinitesimal freeness} of $x_1$ and $x_2$)
    \item[\mybullet] $j^{-1}(2)$ is exactly one block of $\pi$ (\textit{by our assumption that $x_2$ is infinitesimal}).
\end{itemize}
For instance, when $n = 5$ and $(j_1, \dots, j_{1}) = (1,2,2,1,1,2,2,1,1,2)$

\begin{center}\begin{tikzpicture}[anchor=base, baseline]
\node[above] at (-0.75,0.25) {$\pi = \mbox{}$};
\node[above] at (0.00, 0.75) {$x_1$};
\node[above] at (0.50, 0.75) {$x_2$};
\node[above] at (1.00, 0.75) {$x_2$};
\node[above] at (1.50, 0.75) {$x_1$};
\node[above] at (2.00, 0.75) {$x_1$};
\node[above] at (2.50, 0.75) {$x_2$};
\node[above] at (3.00, 0.75) {$x_2$};
\node[above] at (3.50, 0.75) {$x_1$};
\node[above] at (4.00, 0.75) {$x_1$};
\node[above] at (4.50, 0.75) {$x_2$};
\draw  (0.00, 0.75) -- (0.00, 0.25);
\draw  (0.50 , 0.75) -- (0.50, 0.25) -- (4.50, 0.25) -- (4.50, 0.75); 
\draw  (1.00 , 0.75) -- (1.00, 0.25);
\draw  (2.50 , 0.75) -- (2.50, 0.25);
\draw  (3.00 , 0.75) -- (3.00, 0.25);
\draw  (1.50 , 0.75) -- (1.50, 0.50) -- (2.00, 0.50) -- (2.00, 0.75);
\draw  (3.50 , 0.75) -- (3.50, 0.50) -- (4.00, 0.50) -- (4.00, 0.75);
\end{tikzpicture}

\end{center}

\[
\partial \kappa_\pi (x_1, x_2, x_2, x_1, x_1, x_2, x_2, x_1, x_1, x_2) 
= \kappa_1(x_1) \kappa_2 (x_1, x_1)^2 \kappa_5'(x_2, x_2, x_2, x_2, x_2).
    \]

Then, letting $m_1 = m_1(x_1)$ and $m_2 = m_2(x_1)$,  we have for such a tuple  that
\begin{equation}\label{eq:joint_inf_moments}
\sum_{\substack{\pi \leq \ker(j)}} \partial \kappa_{\pi} (x_{j_1}, \cdots, x_{j_{2n}})
=
\kappa'_n(x_2)m_1^{n - 2v} (\sqrt{m_2})^{2v}
\end{equation}
where $0 \leq v \leq [n/2]$ counts the number of times in $j=(j_1, j_2, \dots, j_{2n})$ we have two $``1"$s cyclically adjacent. Now for a given $0 \leq v \leq [n/2]$ the number of such tuples is counted as follows. We parameterize such tuples by a string $l_1 \cdots l_n$ where $l_k = f$ (\textit{$f$ for first}) when $(j_{2k-1}, j_{2k}) = (1, 2)$ and $l_k = s$ (\textit{$s$ for second}) when $(j_{2k-1}, j_{2k}) = (2, 1)$. Then the possible strings are $f^{k_1} s^{k_2} \cdots f^{k_{r}}$ (when $r$ is odd) or $f^{k_1} s^{k_2} \cdots f^{k_{r-1}} s^{k_r}$ (when $r$ is even), where $k_1, \dots, k_r \geq 1$ and $k_1 + \cdots + k_r = n$. Here we have only to count strings that start with $f$ as, by cyclic invariance, the contribution of the strings starting with $s$ will be the same; as long as we multiply by 2 at the end. With this parametrization $v$ will be either $r/2$, when $r$ is even and $(r-1)/2$ when $r$ is odd, i.e. $v = [r/2]$. The number of $r$-tuples $(k_1, \dots, k_r)$ is $\binom{n-1}{r-1}$. Thus

\begin{align*}
\phi'(p^n)
\mbox{}=\mbox{}&
2\kappa'_n(x_2)
\sum_{r=1}^n \binom{n-1}{r-1} m_1^{n - 2[r/2]} (\sqrt{m_2})^{2[r/2]} 
= \mbox{}
\kappa'_n(x_2)\ 2 \sum_{j=0}^{[n/2]}
\binom{n}{2j} m_1^{n-2j} (\sqrt{m_2})^{2j} \\
\mbox{} = \mbox{} &
\kappa'_n(x_2) [(m_1+\sqrt{m_2})^n+(m_1-\sqrt{m_2})^n]
=
\kappa_n'(\alpha x_2) + \kappa_n'(\beta x_2).
\end{align*}
\end{proof}

In order to state a corollary, let us recall some notation from \cite{BS}. Suppose we have an infinitesimal probability space $(\cA, \varphi, \varphi')$ and $a_1, a_2 \in \cA$ with $a_1\stackrel{\cD}{\sim}(\mu_1,\mu_1')$  and $a_2 \stackrel{\cD}{\sim}(\mu_2, \mu_2')$. If $a_1$ and $a_2$ are infinitesimally free then we denote the infinitesimal distribution of $a_1 + a_2$ by $(\mu_1, \mu_1') \boxplus_{\sss B} (\mu_2, \mu_2') = (\mu, \mu')$. Then $\mu = \mu_1 \boxplus \mu_2$ is the usual free additive convolution of $\mu_1$ and $\mu_2$ , and $\mu'$ is the infinitesimal free additive convolution of $(\mu_1, \mu_1')$ and $(\mu_2, \mu_2')$.

Suppose that $x_1$ and $x_2$ are self-adjoint elements in a C$^*$-probability space that are infinitesimally free. Assume $x_2 \stackrel{\cD}{\sim}(\delta_0, \nu)$, and we let $\gamma'$ be the infinitesimal law of $p=x_1x_2+x_2x_1$.

\begin{cor}\label{Inf-g-anticomm}
$\gamma'(E)=\nu(\alpha E)+\nu(\beta E)$ for each Borel set $E$ where $\alpha=m_1(x_1)+\sqrt{m_2(x_1)}$ and $\beta=m_1(x_1)-\sqrt{m_2(x_1)}$. Equivalently,
$
g_p=g_{\alpha x_2}+g_{\beta x_2}
$ or $r_p = r_{\alpha x_2} + r_{\beta x_2}$.
\end{cor}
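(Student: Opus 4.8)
The plan is to deduce the corollary directly from Theorem~\ref{Thm:anticommutator-moments}, of which it is essentially the spectral-measure reformulation in the C$^*$-setting. First I would check the hypotheses: $x_2\stackrel{\cD}{\sim}(\delta_0,\nu)$ forces $m_n(x_2)=\int t^n\,d\delta_0(t)=0$ for all $n\ge1$, so $x_2$ is an infinitesimal operator in the sense of Definition~\ref{def:infinitesimal}, and $x_1,x_2$ are infinitesimally free by hypothesis. Theorem~\ref{Thm:anticommutator-moments} then applies and gives $\kappa_n'(p)=m_n'(p)=\kappa_n'(\alpha x_2)+\kappa_n'(\beta x_2)$ for every $n\ge1$, together with $g_p=g_{\alpha x_2}+g_{\beta x_2}$ and $r_p=r_{\alpha x_2}+r_{\beta x_2}$. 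The last two are precisely the ``equivalently'' clauses of the corollary, so it only remains to recast the moment identity as the stated identity of measures.

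For that, I would note that $\alpha x_2$ and $\beta x_2$ are themselves infinitesimal operators, since scaling does not disturb the vanishing of all moments; hence Proposition~\ref{prop:inf operator} gives $\kappa_n'(\alpha x_2)=m_n'(\alpha x_2)=\alpha^n m_n'(x_2)$, and likewise with $\beta$. Since $x_2\stackrel{\cD}{\sim}(\delta_0,\nu)$ we have $m_n'(x_2)=\int_{\bR}t^n\,d\nu(t)$, so
\[
m_n'(p)=(\alpha^n+\beta^n)\int_{\bR}t^n\,d\nu(t),\qquad n\ge1,
\]
i.e. $m_n'(p)$ is the $n$-th moment of $\nu_\alpha+\nu_\beta$, where $\nu_\alpha$ and $\nu_\beta$ are the dilations of $\nu$ with $n$-th moments $\alpha^n\int t^n\,d\nu$ and $\beta^n\int t^n\,d\nu$ respectively --- the dilates written in set-function form in the statement. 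Because $\gamma'$ is by definition the infinitesimal law of $p$, i.e. $m_n'(p)=\int_{\bR}t^n\,d\gamma'(t)$, and because $\nu$ (hence $\nu_\alpha$ and $\nu_\beta$) has compact support, the moment-determinacy of compactly supported finite signed measures gives $\gamma'=\nu_\alpha+\nu_\beta$, which is the asserted formula; taking Cauchy transforms of this returns $g_p=g_{\alpha x_2}+g_{\beta x_2}$ as a consistency check.

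I do not expect a real obstacle here, since all the content is already in Theorem~\ref{Thm:anticommutator-moments} and the corollary is a dictionary entry. The only delicate points are bookkeeping ones: keeping the dilation direction consistent when passing between $m_n'(\alpha x_2)=\alpha^n m_n'(x_2)$ and push-forwards of $\nu$; observing that all the Cauchy-type series involved start at $n=1$ because $\nu(\bR)=m_0'(x_2)=\varphi'(1)=0$; and using moment-determinacy (equivalently, convergence of the relevant power series in the compactly supported case) to upgrade equality of infinitesimal moment sequences to equality of measures.
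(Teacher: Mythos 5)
Your proposal is correct and takes essentially the same route as the paper: both deduce the corollary by combining Proposition \ref{prop:inf operator} with Theorem \ref{Thm:anticommutator-moments}, the transform identities being quoted directly from the theorem. The only difference is that the paper treats the passage from the infinitesimal moment identity to the identity of measures as immediate (via Stieltjes inversion, cf.\ Remark \ref{Inf-Cauchy-commutator}), whereas you spell it out through moment determinacy of compactly supported signed measures, which is a fair filling-in rather than a new argument.
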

\begin{proof}
Recall that  $x_2 \in \cA$ in an infinitesimal operator implies that the infinitesimal $r$-transform and the infinitesimal Cauchy transform of $x_2$ are related by the simple equation $r_{x_2}(z) = z^{-2} g_{x_2}(z^{-1})$ (see Equation (\ref{eq:inf_cauchy_r_relation})). 
Then the remaining proof is immediate from Proposition \ref{prop:inf operator} and Theorem \ref{Thm:anticommutator-moments}.

\end{proof}

\subsection{The Commutator Case}

Next we consider the commutator $q =i(x_1x_2-x_2x_1)$ and compute its infinitesimal law, where we assume that $x_1$ and $x_2$ are infinitesimally free in an infinitesimal probability space 
$(\mathcal{A},\varphi,\varphi')$ and $x_2$ is an infinitesimal operator.

Our theorem  asserts that $q=i(x_1x_2-x_2x_1)$ has the distribution of the sum of infinitesimally free copies of $\omega x_2$ and $-\omega x_2$ with $\omega = \sqrt{\kappa_2(x_1)}$. To deal with the minus signs in the commutator we need a technical lemma; a proof of which is presented for the reader's convenience.

\begin{lem}\label{lemma:combinatorial sum}
For $1 \leq r \leq n$ with $n$ is even, we have
\begin{equation}\label{eq:alternating sign}
\mathop{\sum_{k_1, \dots, k_r \geq 1}}_{k_1 + \cdots + k_r = n}
(-1)^{k_2 + k_4 + \cdots + k_{2[r/2]}}
=
(-1)^{[r/2]} \binom{l-1}{[\frac{r-1}{2}]} 
\end{equation}
where $l=n/2$ we interpret the left hand side as $1$ when $r = 1$.    
\end{lem}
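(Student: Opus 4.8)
The plan is to evaluate the left-hand side of \eqref{eq:alternating sign} by a generating-function argument that separates the odd-indexed parts $k_1, k_3, k_5, \dots$ from the even-indexed parts $k_2, k_4, \dots$, since only the latter carry a sign. Write $r = 2m$ or $r = 2m+1$ according to the parity of $r$, so that there are exactly $m = [r/2]$ even-indexed parts (each contributing a factor $(-1)^{k}$) and $[\frac{r+1}{2}]$ odd-indexed parts (each contributing a factor $1$). The composition constraint $k_1 + \cdots + k_r = n$ with all $k_i \geq 1$ means the sum factors as the coefficient of $x^n$ in
$$
\Big(\sum_{k \geq 1} x^k\Big)^{[\frac{r+1}{2}]}\Big(\sum_{k \geq 1} (-1)^k x^k\Big)^{[r/2]}
= \Big(\frac{x}{1-x}\Big)^{[\frac{r+1}{2}]}\Big(\frac{-x}{1+x}\Big)^{[r/2]}.
$$

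Next I would simplify this rational function. Pulling out the sign $(-1)^{[r/2]}$ and the overall $x^{r}$, and using $[\frac{r+1}{2}] + [r/2] = r$, the coefficient of $x^n$ we want equals $(-1)^{[r/2]}$ times the coefficient of $x^{n-r}$ in $(1-x)^{-[\frac{r+1}{2}]}(1+x)^{-[r/2]}$. The key observation is that when $n$ is even, $n - r$ has the same parity as $r$, and I claim the factors combine to a function of $x^2$ up to the discrepancy between $[\frac{r+1}{2}]$ and $[r/2]$ (which is $1$ when $r$ is odd and $0$ when $r$ is even). In the even case $r = 2m$ one gets exactly $(1-x^2)^{-m}$, whose coefficient of $x^{n-r} = x^{2(l-m)}$ is $\binom{l-m+m-1}{m-1} = \binom{l-1}{m-1} = \binom{l-1}{[\frac{r-1}{2}]}$, as desired. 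In the odd case $r = 2m+1$ one gets $(1-x)^{-1}(1-x^2)^{-m}$, and since we need the coefficient of an even power $x^{2(l-m)-1}$... — here I would instead keep the single extra $(1-x)^{-1}$ factor and expand $(1-x)^{-1}(1-x^2)^{-m} = \sum_{j\geq 0} x^j \cdot \sum_{i \geq 0}\binom{i+m-1}{m-1}x^{2i}$, then extract the coefficient of $x^{n-r}$; a short bookkeeping check gives $\binom{l-1}{m-1} = \binom{l-1}{[\frac{r-1}{2}]}$ again. The case $r = 1$ ($m=0$) gives coefficient of $x^{n-1}$ in $(1-x)^{-1}$, which is $1$, matching the stated convention.

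Alternatively, and perhaps more cleanly, I would prove \eqref{eq:alternating sign} by induction on $r$: splitting off $k_r$ and using Vandermonde/Pascal-type identities for the partial sums $\sum_{k_1 + \cdots + k_{r-1} = n - k_r}$. The base cases $r = 1$ and $r = 2$ are immediate ($r=2$ gives $\sum_{k_2 = 1}^{n-1}(-1)^{k_2} = 0 = -\binom{l-1}{0}$ only when... — in fact one must be careful: for $r=2$, $l = n/2$ and the right side is $(-1)^1\binom{l-1}{0} = -1$, so I should double-check the $r=2$ evaluation, which suggests the generating-function route is safer than naive induction).

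The main obstacle I anticipate is the parity bookkeeping: correctly matching $[\frac{r+1}{2}]$, $[r/2]$, and $[\frac{r-1}{2}]$ through the cases $r$ even versus $r$ odd, and verifying that $n - r$ lands on an even exponent (or handling the leftover odd factor) so that the binomial coefficient comes out with argument exactly $[\frac{r-1}{2}]$ and not $[\frac{r-1}{2}]$ off by one. Once the generating function $\big(\frac{x}{1-x}\big)^{[\frac{r+1}{2}]}\big(\frac{-x}{1+x}\big)^{[r/2]}$ is set up and the sign $(-1)^{[r/2]}$ is extracted, the remaining coefficient extraction from $(1-x^2)^{-m}$ (times possibly $(1-x)^{-1}$) is routine, so I would spend the bulk of the written proof making the two parity cases explicit and checking small values of $r$ against the formula.
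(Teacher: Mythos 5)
Your generating-function setup is sound, and it is essentially the same route the paper takes: the paper first conditions on $u=k_2+k_4+\cdots+k_{2[r/2]}$, writes the number of compositions as a product of two binomial coefficients, and then evaluates the resulting alternating sum as a coefficient of $(1-x)^{-a}(1+x)^{-b}$ via $(1-x^2)^{-s}$; you pass directly to the composition generating function $\bigl(\tfrac{x}{1-x}\bigr)^{[\frac{r+1}{2}]}\bigl(\tfrac{-x}{1+x}\bigr)^{[r/2]}$, which skips that intermediate step and is, if anything, a little cleaner. Your even case $r=2m$ is correct as written.

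The gap is in the odd case, exactly where you flagged uncertainty. For $r=2m+1$ one has $[\frac{r-1}{2}]=m$, not $m-1$, and the quantity you need is the coefficient of the odd power $x^{\,n-r}=x^{\,2(l-m)-1}$ in $(1-x)^{-1}(1-x^2)^{-m}$, which equals $\sum_{i=0}^{l-m-1}\binom{m-1+i}{m-1}=\binom{l-1}{m}$ by the hockey-stick identity --- not $\binom{l-1}{m-1}$ as your ``short bookkeeping check'' asserts. As written, both sides of your claimed identity $\binom{l-1}{m-1}=\binom{l-1}{[\frac{r-1}{2}]}$ are off by one; the two slips happen to be the same, so once corrected the odd case yields $(-1)^m\binom{l-1}{m}=(-1)^{[r/2]}\binom{l-1}{[\frac{r-1}{2}]}$, in agreement with the lemma (the paper reaches the same value via $(1-x)^{-s}(1+x)^{-(s+1)}=(1-x)(1-x^2)^{-(s+1)}$ and reading off the coefficient of the odd power). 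Also, your $r=2$ worry dissolves: since $n$ is even the sum $\sum_{k_2=1}^{n-1}(-1)^{k_2}$ has an odd number of terms and equals $-1$, matching $(-1)^1\binom{l-1}{0}=-1$, so the formula is consistent there; with the odd-case bookkeeping repaired, your generating-function argument closes the proof.
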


\begin{proof}

To find the sum (\ref{eq:alternating sign}) we let $u = k_2 + k_4 + \cdots + k_{2[r/2]}$ and condition on $u$. We first consider the case $r = 2s$ is even.

\[
\mathop{\sum_{k_1, \dots, k_r \geq 1}}_{k_1 + \cdots + k_r = n}
(-1)^{k_2 + k_4 + \cdots + k_{2[r/2]}}   
=
\sum_{u=s}^{n-s}(-1)^u \binom{u -1}{s -1} \binom{n-u-1}{s-1} 
\]
because $\ds\binom{n-u-1}{s-1} \binom{u -1}{s -1}$ is the cardinality of
\[
\{(k_1, \dots, k_r) \mid k_1, \dots, k_r \geq 1, k_1 + \cdots + k_{r-1} = n - u, k_2 + \cdots + k_r = u\}.
\]
Observe that
\[
(1-x)^{-s}(1+x)^{-s} = 
\Big[\sum\limits_{k=0}^{\infty} \tbinom{s+k-1}{s-1}x^k\Big]\cdot \Big[\sum\limits_{m=0}^{\infty}\tbinom{s+m-1}{s-1}(-x)^m\Big] 
\]
If we let $k = u - s$ and $m = n - u -s$, then $s+k-1=u-1$ and $s+m-1 = n-u-1$. Then the coefficient of $x^{n-2s}$ in $(1-x)^{-s}(1+x)^{-s}$ is 
\[
\mathop{\sum_{k,m \geq 0}}_{k + m = 2n -s}
(-1)^m\tbinom{s+k-1}{s-1} \tbinom{s+m-1}{s-1}
=
\sum_{u=s}^{n-s} (-1)^{u + s}
\binom{u -1}{s -1} \binom{n-u-1}{s-1}
\]
On the other hand, 
$\ds
(1-x)^{-s}(1+x)^{-s}= (1-x^2)^{-s}=\sum\limits_{j=0}^{\infty} \tbinom{s+j-1}{s-1}x^{2j}. 
$
Note that $2j=n-2s$ implies that $j=\frac{n}{2}-s$ (recall that $n$ is assumed to be even). Then the coefficient of $x^{n-2s}$ of $(1-x)^{-s}(1+x)^{-s}$ is $\tbinom{s+(n/2-s)-1}{s-1}=\tbinom{n/2-1}{s-1}.$
Thus, we conclude that (still assuming $r = 2s$)
\[
\mathop{\sum_{k_1, \dots, k_r \geq 1}}_{k_1 + \cdots + k_r = n}
(-1)^{k_2 + k_4 + \cdots + k_{2[r/2]}}
=
(-1)^{s} \binom{l-1}{s-1} 
=
(-1)^{[r/2]} \binom{l-1}{[\frac{r-1}{2}]}
\]
Finally assume that $r = 2 s + 1$ is odd. 
\[
\mathop{\sum_{k_1, \dots, k_r \geq 1}}_{k_1 + \cdots + k_r = n}
(-1)^{k_2 + k_4 + \cdots + k_{2[r/2]}}   
=
\sum_{u=s}^{n-(s+1)}(-1)^u \binom{u -1}{s - 1} \binom{n-u-1}{s} 
\]
because $\ds\binom{n-u-1}{(s+1)-1} \binom{u -1}{s -1}$ is the cardinality of
\[
\{(k_1, \dots, k_r) \mid k_1, \dots, 
   k_r \geq 1, k_1 + \cdots + k_{r} = n - u, 
   k_2 + \cdots + k_{r-1} = u\}.
\]
Observe that
\[
(1-x)^{-s}(1+x)^{-(s+1)} = 
\Big[\sum\limits_{k=0}^{\infty} \tbinom{s+k-1}{s-1}x^k\Big]\cdot \Big[\sum\limits_{m=0}^{\infty}\tbinom{s+m}{s}(-x)^l\Big] 
\]
If we let $k = u - s$ and $m = n - u -s - 1$, then $s+k-1=u-1$ and $s+m = n-u-1$. Then the coefficient of $x^{n - 2s -1}$ in $(1 - x)^{-s} (1 = s)^{-(s+1)}$ is 
\[
\mathop{\sum_{k,l \geq 0}}_{k + m = n -2s -1}
(-1)^l \tbinom{s+k-1}{s-1} \tbinom{s+m}{s}
=
\sum_{u=s}^{n-(s+1)} (-1)^{u + s + 1}
\binom{u -1}{s -1} \binom{n-u-1}{s}.
\]
On the other hand, 
$\ds
(1-x)^{-s}(1+x)^{-(s+1)}= (1 - x)(1-x^2)^{-(s+1)}
=
\sum\limits_{j=0}^{\infty} \tbinom{s+j}{s}x^{2j}
-
\sum\limits_{j=0}^{\infty} \tbinom{s+j}{s}x^{2j+1}. 
$
Since $n - 2s -1$ is odd, so the coefficient of $x^{n - 2 s - 1}$ in $(1 - x)^{-s} (1 + x)^{-(s + 1)} $ is $- \tbinom{s + j}{s}$ when $n - 2s -1 = 2j + 1$, i.e. $j = n/2 -(s+1)$. Thus
\[
\mathop{\sum_{k_1, \dots, k_r \geq 1}}_{k_1 + \cdots + k_r = n}
(-1)^{k_2 + k_4 + \cdots + k_{2[r/2]}} 
=
(-1)^s \binom{s + n/2 - (s+1)}{s}
=
(-1)^s \binom{l-1}{s} = (-1)^{[r/2]} \binom{l-1}{[\frac{r-1}{2}]}.
\]
\end{proof}

\begin{thm}\label{Thm:commutator-moments}
Suppose $x_1, x_2 \in (\cA, \phi, \phi')$ are infinitesimally free with $x_2$ an infinitesimal operator. Let $q=i(x_1x_2-x_2x_1)$ and  $\omega=\sqrt{\kappa_2(x_1)}$. Then the infinitesimal moments of $q$ are given by
    \begin{equation}
        \kappa_n'(q) = m_n'(q)= \left.\begin{cases}
            2\kappa_n'(\omega x_2) & \text{ if }  n \text{ is even} \\
            0  & \text{ if }  n \text{ is odd} 
        \end{cases}\right\}
        =  \kappa_n'(\omega x_2) + \kappa_n'(-\omega x_2).
    \end{equation}
Moreover, 
$
g_q=g_{\omega x_2}+g_{-\omega x_2}
$
and
$
r_q=r_{\omega x_2}+r_{-\omega x_2}.
$
\end{thm}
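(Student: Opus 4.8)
The plan is to reproduce the proof of Theorem~\ref{Thm:anticommutator-moments} almost verbatim, carrying along the sign produced by the minus sign in $q=i(x_1x_2-x_2x_1)$ and absorbing the resulting alternating sum with Lemma~\ref{lemma:combinatorial sum}. Expanding $q^n$ and using linearity of $\varphi'$,
\[
\varphi'(q^n)=i^{\,n}\sum_{l}(-1)^{s(l)}\,\varphi'(x_{j_1}\cdots x_{j_{2n}})
=i^{\,n}\sum_{l}(-1)^{s(l)}\sum_{\pi\in NC(2n)}\partial\kappa_\pi(x_{j_1},\dots,x_{j_{2n}}),
\]
where $l=l_1\cdots l_n\in\{f,s\}^n$ records, for each factor of $q^n$, the choice of $+x_1x_2$ ($l_k=f$) or $-x_2x_1$ ($l_k=s$), the tuple $(j_1,\dots,j_{2n})$ is the associated string of indices (so each $(j_{2k-1},j_{2k})$ is $(1,2)$ or $(2,1)$), and $s(l)=\#\{k:l_k=s\}$.

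As in Theorem~\ref{Thm:anticommutator-moments}, infinitesimal freeness of $x_1,x_2$ forces $\pi\le\ker(j)$ for the surviving $\pi$, and, since every free cumulant of the infinitesimal operator $x_2$ vanishes, $j^{-1}(2)$ must be a single block carrying the unique $\kappa'$. Hence the word contribution is identical to the anticommutator case: if $l$ has $r$ maximal runs then
\[
\sum_{\pi\le\ker(j)}\partial\kappa_\pi(x_{j_1},\dots,x_{j_{2n}})=\kappa'_n(x_2)\,m_1^{\,n-2[r/2]}(\sqrt{m_2})^{\,2[r/2]},
\qquad m_i=m_i(x_1),
\]
where $[r/2]$ is the number of cyclically adjacent pairs of $1$'s in $j$, each such pair contributing $\kappa_2(x_1)+m_1^2=m_2$ once one sums over the two non-crossing ways to partition it, the remaining $n-2[r/2]$ copies of $x_1$ being forced to be singletons.

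It remains to weight these contributions by $(-1)^{s(l)}$. For $n$ odd I would show $\varphi'(q^n)=0$ via the fixed-point-free involution $l\mapsto\bar l$ swapping every $f$ with $s$: it preserves the number of runs of $l$, hence the word contribution, but sends $s(l)$ to $n-s(l)$, flipping the sign when $n$ is odd, so the terms cancel in pairs. For $n$ even put $l=n/2$ and sort the words by their number of runs $r$: a word with runs of lengths $(k_1,\dots,k_r)$ has sign $(-1)^{k_2+k_4+\cdots+k_{2[r/2]}}$ when it starts with $f$ and $(-1)^{k_1+k_3+\cdots}=(-1)^{\,n-(k_2+k_4+\cdots)}=(-1)^{k_2+k_4+\cdots+k_{2[r/2]}}$ when it starts with $s$ (using that $n$ is even), so by Lemma~\ref{lemma:combinatorial sum} the signed count of $r$-run words is $2(-1)^{[r/2]}\binom{l-1}{[(r-1)/2]}$ and
\[
\varphi'(q^n)=2\,i^{\,n}\kappa'_n(x_2)\sum_{r=1}^{n}(-1)^{[r/2]}\binom{l-1}{[(r-1)/2]}\,m_1^{\,n-2[r/2]}\,m_2^{\,[r/2]}.
\]

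Reindexing by $v=[r/2]$ (for $v\ge1$ both $r=2v$ and $r=2v+1$ occur, and $\binom{l-1}{v-1}+\binom{l-1}{v}=\binom{l}{v}$) collapses the inner sum to $\sum_{v=0}^{l}(-1)^v\binom{l}{v}(m_1^2)^{l-v}m_2^{\,v}=(m_1^2-m_2)^{l}=(-\kappa_2(x_1))^{n/2}$; since $i^{\,n}(-1)^{n/2}=1$ for $n$ even, this gives $\varphi'(q^n)=2\kappa'_n(x_2)\kappa_2(x_1)^{n/2}=\kappa'_n(\omega x_2)+\kappa'_n(-\omega x_2)$ with $\omega=\sqrt{\kappa_2(x_1)}$, in agreement with the odd case. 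Finally $q$ is infinitesimal by the Lemma at the start of this section (applied with $x_1$ and $x_2$ interchanged), so Proposition~\ref{prop:inf operator} gives $\kappa'_n(q)=m'_n(q)$; applying that proposition also to the infinitesimal operators $\pm\omega x_2$ and using that $a\mapsto g_a$ and $a\mapsto r_a$ are linear in the infinitesimal moments, resp.\ cumulants, gives $g_q=g_{\omega x_2}+g_{-\omega x_2}$ and $r_q=r_{\omega x_2}+r_{-\omega x_2}$. The only genuine work I anticipate is the combinatorial step: verifying that the per-word contribution coincides with the anticommutator computation and that the sign weighting collapses through Lemma~\ref{lemma:combinatorial sum} and Pascal's identity to $(m_1^2-m_2)^{n/2}=(-\kappa_2(x_1))^{n/2}$; the rest is formal.
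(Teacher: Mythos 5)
Your proposal is correct and follows essentially the same route as the paper's proof: expand $q^n$ into signed words, use infinitesimal freeness and the infinitesimality of $x_2$ to reduce each word to $\kappa_n'(x_2)m_1^{n-2[r/2]}m_2^{[r/2]}$, kill the odd moments by the $f\leftrightarrow s$ involution, and collapse the even case via Lemma~\ref{lemma:combinatorial sum} and Pascal's identity, finishing with Proposition~\ref{prop:inf operator}. Your explicit bookkeeping of the factor $i^{\,n}$ (rather than working with $-iq$) is if anything slightly cleaner than the paper's, and the conclusions agree.
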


\begin{proof}
Writing $-i q = x_1 x_2 - x_2 x_1$ we have 
\begin{align}\label{eq:q-sum1}
\phi'((-iq)^n) &=  \kern-1em
\sum_{j_1, \dots, j_{2n} = 1}^2 \kern-0.5em (-1)^{\psi(j)}
\phi'(x_{j_1} x_{j_2} \cdots x_{j_{2n-1}} x_{j_{2n}}) \notag \\
=\mbox{}&
 \kern-0.5em\sum_{\pi \in NC(2n)} \sum_{j_1, \dots, j_{2n} = 1}^2 (-1)^{\psi(j)}
\partial \kappa_\pi(x_{j_1}, x_{j_2}, \dots, x_{j_{2n-1}}, x_{j_{2n}})
\end{align}
where the sum over $j_1, \dots, j_{2n}$ only includes those tuples such that for each $1 \leq l \leq n$ we have $(j_{2l-1}, j_{2l})$ is either $(1, 2)$; or $(2, 1)$ and $\psi(j)$ is the number of times that $(j_{2l-1}, j_{2l}) = (2,1)$. Then by our assumption on $x_1$ and $x_2$, $\partial \kappa_{\pi}(x_{j_1}, \cdots, x_{j_{2n}}) = 0$ unless 
\begin{itemize}
    \item[\mybullet] $\pi\leq \ker(j)$; (\textit{by infinitesimal freeness} of $x_1$ and $x_2$)
    \item[\mybullet] $j^{-1}(2)$ is exactly one block of $\pi$ (\textit{by our assumption that $x_2$ is infinitesimal}).
\end{itemize}
Then, letting $m_1 = m_1(x_1)$ and $m_2 = m_2(x_1)$,  we have for such a tuple  that
\begin{equation*}
\sum_{\substack{\pi \leq \ker(j)}} \partial \kappa_{\pi} (x_{j_1}, \cdots, x_{j_{2n}})
=
\kappa'_n(x_2)m_1^{n - 2v} (\sqrt{m_2})^{2v}
\end{equation*}
where $0 \leq v \leq [n/2]$ counts the number of times in $j=(j_1, j_2, \dots, j_{2n})$ we have two $``1"$s cyclically adjacent. Now for a given $0 \leq v \leq [n/2]$ the set of such tuples is parameterized as follows. We parameterize such tuples by a string $l_1 \cdots l_n$ where $l_k = f$ (\textit{$f$ for first}) when $(j_{2k-1}, j_{2k}) = (1, 2)$ and $l_k = s$ (\textit{$s$ for second}) when $(j_{2k-1}, j_{2k}) = (2, 1)$. Then the possible strings are $f^{k_1} s^{k_2} \cdots f^{k_{r}}$ and $s^{k_1} f^{k_2} \cdots s^{k_{r}}$ (when $r$ is odd) or $f^{k_1} s^{k_2} \cdots f^{k_{r-1}} s^{k_r}$ and $s^{k_1} f^{k_2} \cdots s^{k_{r-1}} f^{k_r}$ (when $r$ is even), where $k_1, \dots, k_r \geq 1$ and $k_1 + \cdots + k_r = n$. The number of $r$-tuples $(k_1,\dots,k_r)$ is $\tbinom{n-1}{r-1}$. In each of these cases (depending on the parity of $r$), $v$ is the same; indeed with this parametrization $v$ will be either $r/2$, when $r$ is even and $(r-1)/2$ when $r$ is odd, i.e. $v = [r/2]$. Thus replacing $f$ by $s$ and $s$ by $f$ does not change the contribution, but this switch is equivalent to replacing $q$ by $-q$. Hence $\phi'(q^n) = -\phi'(q^n)$ for $n$ odd. Thus the odd moments are $0$ and from now on we shall assume that $n$ is even.

Since $n$ is even we have $(-1)^{k_1 + k_3+ \cdots + k_{r-1}} = (-1)^{k_2 + k_4 + \cdots + k_r}$ (when $r$ is even) and $(-1)^{k_1 + \cdots + k_r} = (-1)^{k_2 + \cdots + k_{r-1}}$ (when $r$ is odd). Hence $(-1)^{\psi(j)}$ is unchanged when we replace $f$ by $s$. Thus we have only to count strings that start with $f$; as long as we multiply by 2 at the end. 

Now assuming $n = 2 l$ is even, we rewrite the sum (\ref{eq:q-sum1}), conditioning on $r$:
\[
\phi'((-iq)^n)
=
2\kappa_n'(x_2)\sum_{r=1}^n m_1^{n - 2[r/2]}(\sqrt{m_2})^{2[r/2]} \kern-1em
\mathop{\sum_{k_1, \dots, k_r \geq 1}}_{k_1 + \cdots + k_r = n} \kern-1em
(-1)^{k_2 + k_4 + \cdots + k_{2[r/2]}},
\]
where we set $\kern-1em\ds\mathop{\sum_{k_1, \dots, k_r \geq 1}}_{k_1 + \cdots + k_r = n} \kern-1em
(-1)^{k_2 + k_4 + \cdots + k_{2[r/2]}} =1$ when $r = 1$. 
By Lemma \ref{lemma:combinatorial sum} we have that
\[
\phi'((-iq)^n)
=
2 \kappa_n'(x_2)
\sum_{s = 0}^{l}
\binom{l }{s} (m_1^2)^{l -s} (-m_2)^{s} 
=
2 \kappa_n'(x_2) (m_1^2 - m_2)^l
=
2\kappa_n'(\omega x_2)
\]
because $\binom{l-1}{[\frac{r-1}{2}]} + \binom{l-1}{[\frac{r}{2}]} = \binom{l}{s}$ when $r = 2s$.
\end{proof}

\begin{rem}\label{Inf-Cauchy-commutator}
In the anticommutator case with $x_2\stackrel{\cD}{\sim} (\delta_0,\nu)$ for $\nu$ is a signed measure on $\bR$, we have, by Stieltjes inversion, $p \stackrel{\cD}{\sim} (\delta_0, \gamma_p)$ where $\gamma_p(E)=\nu(\alpha E)+\nu(\beta E)$ for each Borel set $E \subseteq \bR$ and $\alpha=m_1(x_1)+\sqrt{m_2(x_1)}$ and $\beta=m_1(x_1)-\sqrt{m_2(x_1)}$. In the commutator case we have $q \stackrel{\cD}{\sim} (\delta_0, \gamma_q)$ where $\gamma_q(E)=\nu(\omega E)+\nu(-\omega E)$ for each Borel set $E \subseteq \bR$ and $\omega=\sqrt{\kappa_2(x_1)}$. Moreover the distribution of $q$ depends only on $\nu$ and the second moment of $x_1$ as centering $x_1$ does not change $q$. In addition we see that the infinitesimal distribution of both $p$ and $q$ is the distribution of a sum of  infinitesimally freely independent copies of $x_2$, suitably rescaled.
\end{rem}

\begin{rem}
Suppose $x = b_1 a_1 b_1^* + \cdots + b_k a_k b_k^*$ with
$a_1, \dots, a_k$ infinitesimal operators in $(\cA, \phi,
\phi')$ infinitesimally free from $\{b_1, \dots, b_k\}$. Let
$A = \diag(a_1, \dots, a_k )$, $B = (\beta_{ij})_{i,j =
  1}^k$ with $\beta_{ij} = \phi(b_i b_j^*)$, and $X = \sqrt{B}A\sqrt{B} \in M_k(\cA)$. Then
\[
\phi'(x^n)
= \Tr( K'_n(AB, \dots, AB))
= \Tr( K'_n(\sqrt{B}A\sqrt{B}, \dots, \sqrt{B}A\sqrt{B}))
= \Tr \otimes \phi'(X^n),
\]
where $K'_n(AB, \dots, AB)$ is the $n^{th}$  $M_k(\bC)$-valued cumulant of $AB$ and the last equality follows from Proposition \ref{prop:inf operator}. Hence $x$ and $X$ have the same distribution.
\end{rem}  

\begin{rem}
Note that the infinitesimal law of anti-commutators and commutators can also be computed via the linearization trick. Recall that for a given a  polynomial $p\in\mathbb{C}\langle x_1,\dots, x_n\rangle$ in non-commuting variables $x_1, \dots, x_n$, a matrix $L_p\in M_m(\mathbb{C}\langle x_1,\dots, x_n\rangle)$,
 is called a \emph{linearization of $p$} if there are $b_0,\dots,b_n\in M_m(\mathbb{C})$, $u\in M_{m-1\times 1}(\mathbb{C}\langle x_1,\dots, x_n \rangle)$,  $v\in  M_{1\times m-1}(\mathbb{C}\langle x_1,\dots, x_n \rangle)$ and $Q\in M_{m-1}(\mathbb{C}\langle x_1,\ab\dots,x_n\rangle)$ such that
 \[L_p =\begin{bmatrix} 0 & u \\ v & Q \end{bmatrix} =b_0\otimes 1 + b_1\otimes x_1+\cdots + b_n \otimes x_n,
 \mbox{ with } Q \mbox{ invertible, and } p = -uQ^{-1}v.
 \] 

 A linearization can always be found, see \cite[Ch.~9]{MS} and \cite{BMS}. Given a linearization, let 

 $$X  = \begin{bmatrix} 0 & -uQ^{-1} \\v & 0\end{bmatrix}; \mbox{ then }
 X^2  = \begin{bmatrix} -uQ^{-1}v & 0 \\0  -vuQ^{-1}\end{bmatrix}.
      $$ 

This leads to the following result:
\begin{equation}\label{eqn: linearization-moment}
p^n = [X^{2n}]_{1,1} \textrm{, thus } \varphi'(p^n) = \left[E'(X^{2n})\right]_{1,1}\text{ for all }n\in\mathbb{N}.
\end{equation}
 where $[A]_{1,1}$ denotes the $(1,1)$-entry of the matrix $A$ and $E' = \phi' \otimes Id$. In the case of the commutator and anticommutator the matrices $b_0$, $b_1$, and $b_2$ are very simple; so one could obtain a proof of Theorems \ref{Thm:anticommutator-moments} and \ref{Thm:commutator-moments} in this way. However we found the proof presented here simpler in these special cases. 
\end{rem}

\subsection{Random Matrix Examples}

Note that there are now many results which show that finite rank matrices are asymptotically infinitesimally free from standard random matrix models, see Au \cite{Au21,Au23}, Dallaporta and Février \cite{df}, Collins, Hasebe, and Sakuma \cite{BHSakuma18}, Collins, Leid, and Sakuma \cite{cls}, and Shlyakhtenko \cite{shly}. This gives the results of this section a wide application.

In \cite[Thm.~5.1]{BHSakuma18} Collins, Hasebe and Sakuma considered the limit spectrum of the commutator and anticommutator: $i(A_N U_N B_N U_N^* - U_N B_N U_N^* A_N)$ and $A_N U_N B_N U_N^* + U_N B_N U_N^* A_N$ respectively. Now we show that their results are a consequence of our Theorem \ref{Thm:anticommutator-moments}
and Theorem \ref{Thm:commutator-moments}. If $a$ is infinitesimal and in addition infinitesimally free from $b$ then as observed in Remark \ref{Inf-Cauchy-commutator},  when $\alpha=m_1(b)+\sqrt{m_2(b)}$, $\beta=m_1(b)-\sqrt{m_2(b)}$ and $\omega=\sqrt{\kappa_2(b)}$, we have 
\begin{equation*}\label{g_combined}
g_{ab+ba}(z)=g_{\alpha a}(z)+g_{\beta a}(z) \mathrm{\ and \ } g_{i(ab-ba)}(z)= g_{\omega a}(z)+g_{-\omega a}(z), \mbox{\ and\ }
\end{equation*}
\begin{equation}\label{r_combined}
r_{ab+ba}(z)=r_{\alpha a}(z)+r_{\beta a}(z) \mathrm{\ and \ } r_{i(ab-ba)}(z)= r_{\omega a}(z)+r_{-\omega a}(z).
\end{equation}
The first set of equations gives us the spectral results of \cite{BHSakuma18} and the second equation gives us that the distribution is the free additive convolution of two free copies of $a$, after rescaling.


\section{Infinitesimal $R$-diagonal Operators}\label{sec:R-diagonal}

In this Section, we extend $R$-diagonal properties (see Subsection \ref{Pre: R-diagonal}) to the infinitesimal setting in the obvious way.

Recall that $(\cA,\phi,\phi')$ is a \textit{infinitesimal $*$-probability space} if $(\cA,\varphi)$ is a $*$-probability space with $\varphi'$ is self-adjoint in the sense that $\varphi'(a^*)=\overline{\varphi'(a)}$ for all $a\in\cA.$
\begin{defn}
Let $(\mathcal{A},\varphi,\varphi')$ be a infinitesimal $*$-probability space.  We say $a\in\mathcal{A}$ is \emph{infinitesimal $R$-diagonal} if it is $R$-diagonal and in addition for all $\epsilon_1, \dots , \epsilon_n \in \{-1, 1\}$, we have that 
\[
\kappa_n'(a^{(\epsilon_1)},\dots,a^{(\epsilon_n)})=0 
\]
unless $n$ is even and $\epsilon_i + \epsilon_{i+1} = 0$ for $1 \leq i < n$.

We say that $a \in \cA$ is \textit{infinitesimally even} if $a$ is even and $\phi'(a^{2n+1})=0$ for all $n\in\mathbb{N}$.
\end{defn}

\begin{prop}
Suppose $(\mathcal{A},\varphi,\varphi')$ is a $*$-infinitesimal probability space, and $x_1=x_1^*$ and $x_2=x_2^*$ are infinitesimally free elements in $\mathcal{A}$ such that  $x_2$ is an infinitesimal operator. We further assume that $m_1(x_1) = 0$, i.e. $x_1$ is centered. Then $c=x_1x_2$ is infinitesimally $R$-diagonal, and
\[
\kappa_{n}'(c, c^*, c, c^*,\dots, c, c^*)=\kappa_n'(c^*, c, c^*, c,\dots, c^*,c)= \kappa_n'(x_2)\kappa_2(x_1)^{n/2}.
\]
\end{prop}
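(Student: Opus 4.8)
The plan is to verify the two halves of ``infinitesimally $R$-diagonal'' in turn, the ordinary half being essentially immediate.

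Since infinitesimal freeness of $x_1,x_2$ entails their freeness with respect to $\phi$, and $x_2$ is infinitesimal, the Lemma of \S\ref{sec:some polynomials} applies: writing $c=x_1x_2$ and $c^*=x_2x_1$, any product $c^{(\epsilon_1)}\cdots c^{(\epsilon_m)}$ is a word in $x_1,x_2$ containing at least one $x_2$, hence is an infinitesimal operator and in particular has $\phi$-value $0$. Thus every $*$-moment of $c$ vanishes, so every ordinary $*$-cumulant $\kappa_n(c^{(\epsilon_1)},\dots,c^{(\epsilon_n)})$ vanishes, and the ordinary $R$-diagonal condition holds vacuously.

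For the infinitesimal cumulants I would use the infinitesimal analogue of the formula for free cumulants with products as entries (\cite[Thm.~11.12]{NS}): writing $(y_{2l-1},y_{2l})=(x_1,x_2)$ when $\epsilon_l=1$ and $(x_2,x_1)$ when $\epsilon_l=-1$, and $\sigma=\{\{1,2\},\{3,4\},\dots,\{2n-1,2n\}\}$, one has
\[
\kappa_n'(c^{(\epsilon_1)},\dots,c^{(\epsilon_n)})
=\sum_{\substack{\pi\in NC(2n)\\ \pi\vee\sigma=1_{2n}}}\partial\kappa_\pi(y_1,\dots,y_{2n}).
\]
(This follows from the classical product formula read over the ring $\bC[\hbar]/(\hbar^2)$ with $\phi_\hbar=\phi+\hbar\phi'$, whose free cumulants on $\cA$ are $\kappa+\hbar\kappa'$; equivalently, from freeness in the algebra $\widetilde\cA$ of \S\ref{sec:preliminaries}.) Next I would determine which $\pi$ contribute: by infinitesimal freeness no block of $\pi$ mixes an $x_1$- with an $x_2$-position; since $\kappa_m(x_2,\dots,x_2)=0$ for every $m$ while $\partial\kappa_\pi$ places a $\kappa'$ on only one block, all $x_2$-positions must lie in that one block, which is therefore $W:=\{l:y_l=x_2\}$; since $\kappa_1(x_1)=m_1(x_1)=0$, every other block (automatically an all-$x_1$ block carrying an ordinary $\kappa$) has size $\geq2$; and since $W$ meets every block of $\sigma$, the condition $\pi\vee\sigma=1_{2n}$ is automatic. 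Consequently
\[
\kappa_n'(c^{(\epsilon_1)},\dots,c^{(\epsilon_n)})=\kappa_n'(x_2)\sum_{\rho}\kappa_\rho(x_1,\dots,x_1),
\]
the sum over $\rho\in NC$ of the set $P_1$ of $x_1$-positions with every block of size $\geq2$ such that $\{W\}\cup\rho\in NC(2n)$.

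Evaluating this last sum is the combinatorial heart, and the step I expect to be the main obstacle. The key observation is that, $\{W\}\cup\rho$ being non-crossing with $W$ a block, each block of $\rho$ must lie inside a single maximal run of consecutive $x_1$-positions in the cyclic order on $[2n]$ (consecutive runs being separated by $x_2$-positions); and since each $\sigma$-pair $\{2l-1,2l\}$ carries exactly one $x_2$, no such run has length $\geq3$. A run of length $1$ would force a forbidden singleton in $\rho$, so a valid $\rho$ exists exactly when every run has length $2$, in which case $\rho$ is forced to be the pairing of these $n/2$ runs and $\kappa_\rho(x_1,\dots,x_1)=\kappa_2(x_1)^{n/2}$. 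Finally, a length-$2$ run is $\{2l,2l+1\}$ with $\epsilon_l=-1,\ \epsilon_{l+1}=1$, and requiring the $x_1$-position of every pair to lie in such a run forces $\epsilon_l\neq\epsilon_{l+1}$ for all $l$ (cyclically), i.e.\ $n$ even and $\epsilon_i+\epsilon_{i+1}=0$ for $1\le i<n$. Taking $\epsilon=(1,-1,1,-1,\dots)$ gives $\kappa_n'(c,c^*,c,c^*,\dots)=\kappa_n'(x_2)\kappa_2(x_1)^{n/2}$ and $\epsilon=(-1,1,-1,1,\dots)$ gives $\kappa_n'(c^*,c,c^*,c,\dots)=\kappa_n'(x_2)\kappa_2(x_1)^{n/2}$, while every other $\epsilon$ gives $0$; together with the second paragraph this shows $c$ is infinitesimally $R$-diagonal with the asserted alternating cumulants.
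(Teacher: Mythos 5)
Your proposal is correct and follows essentially the same route as the paper: the ordinary $*$-cumulants are shown to vanish (so $R$-diagonality is automatic), and the infinitesimal cumulants are computed via the infinitesimal products-as-entries formula of F\'evrier--Nica, forcing a single $x_2$-block carrying the $\kappa'$ and size-two $x_1$-blocks because $m_1(x_1)=0$, which yields the alternation condition and the value $\kappa_n'(x_2)\kappa_2(x_1)^{n/2}$. Your run-based analysis of the surviving partitions (and the observation that $\pi\vee\sigma=1_{2n}$ is automatic) just spells out in more detail what the paper states briefly.
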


\begin{proof}
We first observe that the $*$-distribution of $c$ is 0, by this we mean that all $*$-moments $\phi(c^{(\epsilon_1)}\cdots c^{(\epsilon_n)})$ equal $0$. Indeed, all cumulants of $x_2$ are 0, and every $*$-cumulant of $c$ can be written as a linear combination of products of cumulants of $x_1$ and cumulants of $x_2$ using the formula for cumulants with products as entries, c.f. \cite[Thm.~11.12]{NS}, and the vanishing of mixed cumulants, c.f. \cite[Thm.~11.16]{NS}. Thus all $*$-cumulants of $c$ are 0, and in particular $c$ is $R$-diagonal.

Now suppose that $\epsilon_1, \dots, \epsilon_k \in \{-1, 1\}$; we must now show that  
$
\kappa_n'(c^{(\epsilon_1)},\dots, c^{(\epsilon_k)})=0 
$
unless $k$ is even and $\epsilon_i + \epsilon_{i+1} = 0$ for $1 \leq i < k$. For the purpose of this part of the proof let us set $(j_{2l-1}, j_{2l}) = (1, 2)$ if $\epsilon_l = 1$ and $(j_{2l-1}, j_{2l}) = (2, 1)$ if $\epsilon_l = -1$. Let $\delta = \{(1, 2), \dots, (2k-1, 2k)\}$.  Again, by the formula for cumulants with products as entries  (this time in the infinitesimal case, see (\cite[Prop.~3.15]{FN}), we have 
\[
\kappa_n'(c^{(\epsilon_1)},\dots, c^{(\epsilon_n)})
=
\mathop{\sum_{\pi \in NC(2k)}}_{\pi \vee \delta = 1_{2k}}
\partial \kappa_\pi(x_{j_1}, x_{j_2}, \dots, x_{j_{2k-1}}, x_{j_{2k}})
\]
After the discussion following Equation (\ref{eq:joint_inf_moments}), we observe that $\partial \kappa_{\pi}(x_{j_1},x_{j_2},\dots,x_{j_{2k-1}},x_{j_{2k}})=0$ unless
$j^{-1}(2)$ is a block of $\pi$ of size $k$ and all remaining blocks of $\pi$  are either a pair connecting one $x_1$ to another $x_1$, or a singleton with a single $x_1$. Since we have assumed that $m_1(x_1) = 0$, all the $x_1$-blocks of $\pi$ must have size $2$, or else the contribution vanishes. This can only happen if each $c$ is followed cyclically by a $c^*$, and hence $k$ must be even and the $\epsilon$'s must alternate. This shows that $c$ is infinitesimally $R$-diagonal. Moreover,  there is only one such $\pi$ and for this $\pi$ we have $\partial \kappa_{\pi}(x_{j_1},x_{j_2},\dots,x_{j_{2k-1}},x_{j_{2k}})= \kappa_k'(x_2) \kappa_2(x_1)^{k/2}$. Hence 
\[
\kappa_{n}'(c,c^*,c,c^*,\dots, c, c^*)=\kappa_n'(c^*, c, c^*, c,\dots, c^*, c)= \kappa_n'(x_2)\kappa_2(x_1)^{n/2}
\]
as claimed. 
\end{proof}

Recall that if $a$ is an $R$-diagonal element in a $*$-probability space $(\mathcal{A},\varphi)$, then the $*$-distribution of $a^*a$ and $aa^*$ can be uniquely determined by $(\alpha_n)_n$ and $(\beta_n)_n$ where
$$
\alpha_n=\kappa_{2n}(a,a^*,a,a^*,\dots, a,  a^*) \text{ and }\beta_n=\kappa_{2n}(a^*,a,a^*,a\cdots, a^*,a).
$$
Here we will generalize this idea to the infinitesimal free framework. For a given element $a\in\mathcal{A}$, we denote 
$$
\alpha'_n=\kappa_{2n}'(a,a^*,a,a^*,\cdots, a, a^*) \text{ and }\beta'_{2n}=\kappa'_{2n}(a^*,a,a^*,a,\cdots, a^*, a). 
$$ 
Let us first observe that we have the following Lemma immediately from \cite[Prop.~3.15]{FN}.

\begin{lem}\label{lemma:Icumulant of aa^*}
Fix $n\in\mathbb{N}$, let $\sigma=\{(1,2),(3,4),\dots,(2n-1,2n)\}$. Then we have
$$
    \kappa_n'(aa^*,\dots, aa^*)=\sum_{\substack{\pi\in NC(2n)\\
                  \pi \vee \sigma=1_{2n}}} 
                  \partial \kappa_{\pi}(a,a^*,\dots, a, a^*) 
$$
and
$$
\kappa_n'(a^*a,\dots,a^*a)=\sum_{\substack{\pi\in NC(2n)\\
                  \pi \vee \sigma=1_{2n}}} 
                  \partial \kappa_{\pi}(a^*,a,\dots,a^*,a).
$$
\end{lem}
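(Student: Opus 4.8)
The plan is to obtain both identities as the specialization of the infinitesimal ``cumulants with products as entries'' formula, \cite[Prop.~3.15]{FN}, to the alternating word $a, a^*, a, a^*, \dots$; this is the exact infinitesimal counterpart of the way the formulas for $\kappa_n(aa^*,\dots,aa^*)$ and $\kappa_n(a^*a,\dots,a^*a)$ recorded in Subsection~\ref{Pre: R-diagonal} follow from \cite[Thm.~11.12]{NS}.

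Concretely, I would first fix $n$ and write out the $n$ copies of $aa^*$ as a single string of $2n$ entries $(a_1,\dots,a_{2n})=(a,a^*,a,a^*,\dots,a,a^*)$, letting $A_j = a_{2j-1}a_{2j} = aa^*$ for $1 \le j \le n$, so that the interval partition $\sigma=\{(1,2),(3,4),\dots,(2n-1,2n)\}$ has as its blocks exactly the sets of factors making up the successive $A_j$. Applying \cite[Prop.~3.15]{FN} to $\kappa_n'(A_1,\dots,A_n)$ then gives $\kappa_n'(aa^*,\dots,aa^*)=\sum_{\pi\in NC(2n),\,\pi\vee\sigma=1_{2n}}\partial\kappa_\pi(a,a^*,\dots,a,a^*)$, which is the asserted formula. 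The second identity is proved verbatim, replacing the string $(a_1,\dots,a_{2n})$ by $(a^*,a,a^*,a,\dots,a^*,a)$ while keeping the same $\sigma$.

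Since the content is entirely contained in the cited proposition, I do not anticipate a real obstacle; the only point worth a sentence is to confirm that the operator $\partial$ in \cite[Prop.~3.15]{FN} is the same one used here, i.e.\ the Leibniz-type sum over blocks $V\in\pi$ in which exactly one factor $\kappa_{|V|}$ is replaced by $\kappa'_{|V|}$ while the others remain ordinary free cumulants. If one prefers a self-contained argument, the same conclusion follows by applying $\frac{d}{dt}\big|_{t=0}$ to the purely algebraic identity $\kappa^{(t)}_n(A_1,\dots,A_n)=\sum_{\pi\vee\sigma=1_{2n}}\kappa^{(t)}_\pi(a_1,\dots,a_{2n})$ of \cite[Thm.~11.12]{NS} for the one-parameter family of functionals $\phi_t=\phi+t\phi'$, the product rule turning each $\kappa^{(t)}_\pi$ into $\partial\kappa_\pi$.
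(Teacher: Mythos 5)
Your proposal is exactly the paper's route: the lemma is stated there as an immediate consequence of the infinitesimal products-as-entries formula \cite[Prop.~3.15]{FN} applied to the alternating strings $(a,a^*,\dots,a,a^*)$ and $(a^*,a,\dots,a^*,a)$ with $\sigma=\{(1,2),\dots,(2n-1,2n)\}$, which is precisely what you do. Your closing remark about recovering $\partial\kappa_\pi$ by differentiating the $\phi_t=\phi+t\phi'$ family is a correct optional alternative but adds nothing beyond the citation.
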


\begin{prop}
Let $a$ be an infinitesimal $R$-diagonal element. Then we have
\begin{equation}\label{prop:Icumulant of aa^*}
\kappa'_n(aa^*,\dots, aa^*) = \sum_{\substack{\pi\in NC(n)\\
                  \pi=\{V_1,\dots,V_r\}}} \left(\alpha'_{|V_1|}\prod_{j=2}^r\beta_{|V_j|}+\alpha_{|V_1|}\sum_{j=2}^r\beta_{|V_j|}'\prod_{i=2,i\neq j}^r\beta_{|V_i|}\right)
\end{equation}
and
\begin{equation}\label{prop:Icumulant of a^*a}
\kappa_n'(a^*a,\dots,a^*a)=\sum_{\substack{\pi\in NC(n)\\
                  \pi=\{V_1,\dots,V_r\}}}                 \left(\beta'_{|V_1|}\prod_{j=2}^r \alpha_{|V_j|}+\beta_{|V_1|}\sum_{j=2}^r\alpha_{|V_j|}'\prod_{i=2,i\neq j}^r\alpha_{|V_i|}\right).
\end{equation}
where $V_1$ is the block of $\pi$ which contains the first element $1$. 
\end{prop}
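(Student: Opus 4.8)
The plan is to start from Lemma~\ref{lemma:Icumulant of aa^*}, which already expresses $\kappa_n'(aa^*,\dots,aa^*)$ as a sum of $\partial\kappa_\pi(a,a^*,\dots,a,a^*)$ over those $\pi\in NC(2n)$ with $\pi\vee\sigma=1_{2n}$, where $\sigma=\{(1,2),(3,4),\dots,(2n-1,2n)\}$. First I would recall, exactly as in the classical $R$-diagonal computation of \cite[Prop.~15.8]{NS} (reproduced in Subsection~\ref{Pre: R-diagonal}), that because $a$ is $R$-diagonal the only $\pi$ contributing to $\kappa_\pi(a,a^*,\dots,a,a^*)$ are the \emph{non-crossing pair partitions that connect each $a$ to an $a^*$}, and that the condition $\pi\vee\sigma=1_{2n}$ forces such a $\pi$ to be encoded by a single non-crossing partition $\rho=\{V_1,\dots,V_r\}\in NC(n)$ on the $n$ ``slots'': each block $V_j$ of $\rho$ becomes a ``cycle'' of $a$'s and $a^*$'s whose $\kappa$-value is $\alpha_{|V_j|}$ if the cycle starts with an $a$ and $\beta_{|V_j|}$ if it starts with an $a^*$, and the alternating/parity structure forces the block $V_1$ containing the first slot to contribute $\alpha_{|V_1|}$ while every other block contributes $\beta_{|V_j|}$. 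This is precisely the classical formula $\kappa_n(aa^*,\dots,aa^*)=\sum_{\rho=\{V_1,\dots,V_r\}}\alpha_{|V_1|}\beta_{|V_2|}\cdots\beta_{|V_r|}$ recalled above.

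Next I would carry the Leibniz rule through this bijection. The point is that $\partial\kappa_\pi=\sum_{V\in\pi}\partial\kappa_{\pi,V}$ differentiates one block of $\pi$ at a time, and under the bijection $\pi\leftrightarrow\rho$ the blocks of $\pi$ correspond, two-at-a-time, to the slot-blocks of $\rho$: differentiating a pair-block of $\pi$ inside the cycle coming from $V_j$ converts the factor $\kappa$ into a $\kappa'$, and summing over all pair-blocks inside that cycle, together with the fact (from infinitesimal $R$-diagonality) that $\partial\kappa'$ of such a cycle collapses to $\alpha_{|V_j|}'$ (resp.\ $\beta_{|V_j|}'$) just as the undifferentiated value collapsed to $\alpha_{|V_j|}$ (resp.\ $\beta_{|V_j|}$), shows that the total contribution of $\rho$ to $\partial\kappa$ is: differentiate exactly one of its slot-blocks. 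Differentiating $V_1$ (the one with $1$) gives $\alpha'_{|V_1|}\prod_{j\ge 2}\beta_{|V_j|}$; differentiating some $V_j$ with $j\ge 2$ gives $\alpha_{|V_1|}\,\beta'_{|V_j|}\prod_{i\ge 2,\,i\ne j}\beta_{|V_i|}$; summing over $j$ yields exactly the bracketed expression in \eqref{prop:Icumulant of aa^*}. The $a^*a$ case is identical with the roles of $\alpha$ and $\beta$ interchanged, which is why \eqref{prop:Icumulant of a^*a} has that form; this follows formally by applying the $aa^*$ result to the element $a^*$, whose determining sequences are $(\beta_n)$ and $(\alpha_n)$ and whose infinitesimal determining sequences are $(\beta_n')$ and $(\alpha_n')$ by self-adjointness of $\varphi'$.

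The main obstacle I anticipate is making the ``two $\pi$-blocks per $\rho$-block, differentiate-then-collapse'' bookkeeping precise: one has to verify that after differentiating a single pair-block of $\pi$ the resulting $\partial\kappa$-term, restricted to the cycle of $a,a^*$'s coming from a fixed $V_j$, really does sum (over the choice of which pair inside that cycle is differentiated, and over which of $a$ or $a^*$ within that pair) to $\alpha'_{|V_j|}$ or $\beta'_{|V_j|}$ — i.e.\ that the defining recursion for $\alpha_n',\beta_n'$ in terms of $\partial\kappa$ over $NC$ is compatible with the block-collapse used in the non-infinitesimal case. I expect this to be a direct consequence of the infinitesimal version of the product formula \cite[Prop.~3.15]{FN} applied one more time, exactly mirroring how the non-infinitesimal block-collapse comes from \cite[Thm.~11.12]{NS}; once that compatibility is stated, the rest is a matter of collecting terms via the single-block Leibniz rule. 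I would therefore structure the write-up as: (i) recall the non-infinitesimal bijection and collapse; (ii) state the one-block-at-a-time Leibniz principle for $\partial\kappa_\pi$; (iii) combine, obtaining \eqref{prop:Icumulant of aa^*}; (iv) deduce \eqref{prop:Icumulant of a^*a} by the $a\mapsto a^*$ symmetry.
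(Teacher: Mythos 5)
Your high-level route coincides with the paper's: start from Lemma \ref{lemma:Icumulant of aa^*}, parameterize the contributing $\pi\in NC(2n)$ with $\pi\vee\sigma=1_{2n}$ by partitions in $NC(n)$, apply the one-block-at-a-time Leibniz structure of $\partial\kappa_\pi$, and get \eqref{prop:Icumulant of a^*a} by exchanging the roles of $a$ and $a^*$. However, the combinatorial core is misstated, and the step you yourself flag as the main obstacle rests on that misstatement. For an $R$-diagonal element the nonvanishing $\kappa_\pi(a,a^*,\dots,a,a^*)$ are \emph{not} supported on non-crossing pairings: they are supported on those $\pi$ all of whose blocks alternate between $a$'s and $a^*$'s, and these blocks may have any even size (already $\kappa_4(a,a^*,a,a^*)=\alpha_2\neq 0$ in general). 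Under the bijection with $NC(n)$, each slot-block $V_j$ corresponds to exactly \emph{one} block of $\pi$, of size $2|V_j|$, whose cumulant is literally $\alpha_{|V_j|}$ or $\beta_{|V_j|}$ --- not to a ``cycle'' of pair-blocks, and not to two blocks of $\pi$. Consequently the compatibility you propose to verify --- that summing, over the pair-blocks of a cycle, the terms with one $\kappa_2$ replaced by $\kappa_2'$ ``collapses'' to $\alpha'_{|V_j|}$, just as the undifferentiated cycle ``collapsed'' to $\alpha_{|V_j|}$ --- is false: already for $|V_j|=2$ it would assert $\alpha_2=\alpha_1\beta_1$ and $\alpha_2'=\alpha_1'\beta_1+\alpha_1\beta_1'$, which hold for circular-type elements (only second-order $*$-cumulants) but not for a general infinitesimal $R$-diagonal element, and no further application of \cite[Prop.~3.15]{FN} will produce them. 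The pairing picture also miscounts the contributing partitions (e.g.\ for $n=2$ it misses the term $\alpha_2$ coming from $\pi=1_4$).

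Once the parameterization is corrected the proof closes immediately, with no collapse step at all, which is exactly how the paper argues: for the contributing $\pi$ associated with $\{V_1,\dots,V_r\}\in NC(n)$ one has $\kappa_\pi(a,a^*,\dots,a,a^*)=\alpha_{|V_1|}\beta_{|V_2|}\cdots\beta_{|V_r|}$, each factor being a single cumulant of one block, so $\partial\kappa_\pi$, being the sum over blocks of $\pi$ of ``replace that block's $\kappa$ by $\kappa'$'', equals $\alpha'_{|V_1|}\prod_{j\ge2}\beta_{|V_j|}+\alpha_{|V_1|}\sum_{j\ge2}\beta'_{|V_j|}\prod_{i\ge2,\,i\neq j}\beta_{|V_i|}$. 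One point worth recording explicitly (neither you nor the paper spells it out) is why all other $\pi$ with $\pi\vee\sigma=1_{2n}$ drop out of the infinitesimal sum: each Leibniz term of $\partial\kappa_\pi$ contains at most one $\kappa'$, so if some block of $\pi$ fails to be alternating of even size, every term contains either an ordinary non-alternating cumulant (zero by $R$-diagonality) or a non-alternating $\kappa'$ (zero by infinitesimal $R$-diagonality); this is precisely where the infinitesimal hypothesis is used. Your derivation of \eqref{prop:Icumulant of a^*a} by passing to $a^*$ is fine, since $a^*$ is again infinitesimal $R$-diagonal with the determining sequences $(\alpha_n,\alpha_n')$ and $(\beta_n,\beta_n')$ interchanged, directly from the definitions.
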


\begin{proof}
Following Lemma \ref{lemma:Icumulant of aa^*}, we have
$$
 \kappa_n'(aa^*,\dots,aa^*)=\sum_{\substack{\pi\in NC(2n)\\
                  \pi \vee \sigma=1_{2n}}} 
                  \partial \kappa_{\pi}(a,a^*,\dots, a, a^*). 
$$
where $\sigma=\{(1,2),\dots,(2n-1,2n)\}$. We note that if $\pi\in NC(2n)$ such that $\pi\vee \sigma=1_{2n}$, then $\pi$ is canonical bijection with $NC(n)$ (see  \cite[Prop.~11.25]{NS}). Thus \eqref{prop:Icumulant of aa^*} can be deduced immediately from the fact that 
$$
\partial\kappa_{\pi}(a,a^*,\dots,a,a^*)= \alpha'_{|V_1|}\prod_{j=2}^r\beta_{|V_j|}+\alpha_{|V_1|}\sum_{j=2}^r\beta_{|V_j|}'\prod_{i=2,i\neq j}^r\beta_{|V_i|}.$$
The proof of \eqref{prop:Icumulant of a^*a} is the same as above. 
\end{proof}
By applying Lemma \ref{lemma:Icumulant of aa^*} and combining the vanishing of mixed infinitesimal free cumulants property with the analogous argument in the proof of \cite[Prop.~15.8]{NS}, we have the following result. 

\begin{prop}
Let $a$ and $b$ are elements in a infinitesimal $*$-probability space that $a$ and $b$ are $*$-infinitesimally free. Suppose that $a$ is infinitesimal $R$-diagonal, then $ab$ is also infinitesimal $R$-diagonal.
\end{prop}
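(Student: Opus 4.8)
The plan is to mirror the classical argument for \cite[Prop.~15.8]{NS} in the infinitesimal setting, using the vanishing of mixed infinitesimal free cumulants (the theorem of Février--Nica quoted above) together with the product-formula for infinitesimal cumulants \cite[Prop.~3.15]{FN}. First I would set up the notation: let $\epsilon_1,\dots,\epsilon_n\in\{-1,1\}$ and consider $\kappa_n'\big((ab)^{(\epsilon_1)},\dots,(ab)^{(\epsilon_n)}\big)$. Writing $(ab)^{(\epsilon)}=ab$ when $\epsilon=1$ and $(ab)^{(\epsilon)}=b^*a^*$ when $\epsilon=-1$, each entry is a product of two elements drawn from $\{a,a^*\}$ and $\{b,b^*\}$ respectively, so I can expand via the infinitesimal cumulants-with-products formula: $\kappa_n'$ of these products equals $\sum_{\pi}\partial\kappa_\pi(\,\cdot\,)$ over those $\pi\in NC(2n)$ with $\pi\vee\sigma=1_{2n}$, where $\sigma$ pairs the two factors coming from each original entry and the arguments alternate appropriately between $\{a,a^*\}$-slots and $\{b,b^*\}$-slots.

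Next, the key reduction: in each term $\partial\kappa_\pi$ exactly one block carries a primed cumulant $\kappa'$ and the rest carry ordinary $\kappa$'s. By the vanishing of mixed (ordinary and infinitesimal) free cumulants — since $a$ and $b$ are $*$-infinitesimally free — any block of $\pi$ that mixes an $a^{(\cdot)}$-slot with a $b^{(\cdot)}$-slot forces that factor ($\kappa$ or $\kappa'$) to vanish. Hence only those $\pi$ survive whose blocks are each entirely within the $a$-slots or entirely within the $b$-slots. From this point the argument is exactly the non-crossing bookkeeping of \cite[Prop.~15.8]{NS}: the condition $\pi\vee\sigma=1_{2n}$ together with the $a$/$b$-separation forces the $b$-part and $a$-part of $\pi$ to interlace in the standard way, which (a) makes the overall number of entries even if it is to be nonzero, and (b) identifies the surviving $\pi$'s with pairs consisting of a non-crossing partition on the $a$-slots and its Kreweras-type complement on the $b$-slots, with the $a$/$b$-slots alternating exactly when the $\epsilon_i$'s alternate with $\epsilon_i+\epsilon_{i+1}=0$. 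So $\kappa_n'\big((ab)^{(\epsilon_1)},\dots,(ab)^{(\epsilon_n)}\big)=0$ unless $n$ is even and the $\epsilon$'s alternate; together with the already-known fact (from \cite[Prop.~15.8]{NS} applied to the ordinary cumulants $\kappa$, which is valid since ordinary $*$-freeness is implied) that $ab$ is $R$-diagonal, this is precisely the definition of $ab$ being infinitesimal $R$-diagonal.

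Concretely the steps are: (1) reduce to showing the vanishing of $\kappa_n'((ab)^{(\epsilon_1)},\dots)$ in the non-alternating cases, the $R$-diagonality of $ab$ itself being \cite[Prop.~15.8]{NS}; (2) apply Lemma~\ref{lemma:Icumulant of aa^*}-style expansion, i.e. \cite[Prop.~3.15]{FN}, to write this as $\sum_{\pi\vee\sigma=1_{2n}}\partial\kappa_\pi$ on the interleaved $\{a,a^*\}/\{b,b^*\}$ arguments; (3) invoke vanishing of mixed $\kappa$ and $\kappa'$ to discard every $\pi$ with a mixed block; (4) run the classical non-crossing interlacing argument to conclude that a surviving $\pi$ can exist only when $n$ is even and the $\epsilon$'s alternate. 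The main obstacle is step (4): one has to check that the $a$/$b$-separation plus $\pi\vee\sigma=1_{2n}$ really does force the rigid alternating structure in the infinitesimal case just as in the ordinary one — but since the combinatorics of $NC(2n)$ and the constraint $\pi\vee\sigma=1_{2n}$ are identical to \cite{NS}, and the only new feature is that one distinguished block is primed (which does not affect \emph{which} $\pi$ survive, only the value assigned), this goes through essentially verbatim. That is exactly what the sentence ``combining the vanishing of mixed infinitesimal free cumulants property with the analogous argument in the proof of \cite[Prop.~15.8]{NS}'' is pointing to, so I would keep the write-up short and cite those two ingredients rather than reproducing the interlacing lemma in full.
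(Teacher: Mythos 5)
Your proposal is correct and takes essentially the same route as the paper, whose own proof is just the one-sentence combination of the infinitesimal products-as-entries formula \cite[Prop.~3.15]{FN} (as in Lemma \ref{lemma:Icumulant of aa^*}), the vanishing of mixed ordinary and infinitesimal free cumulants, and the interlacing argument from the proof of \cite[Prop.~15.8]{NS}. Just make sure the write-up states explicitly that a primed block sitting on the $a$-slots is killed in the non-alternating cases by the \emph{infinitesimal} $R$-diagonality of $a$ (not merely its ordinary $R$-diagonality), which is exactly why the same set of partitions $\pi$ survives.
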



\section{Infinitesimal Idempotents and  Boolean Cumulants}\label{sec:idempotents}

In this section we consider the special case when our infinitesimal operator is an idempotent. We get the surprising result that Boolean cumulants can be expressed as infinitesimal moments. Theorem \ref{thm:boolean independence} means that any random matrix model that produces asymptotic infinitesimal freeness (see \cite{shly} and \cite{df}) will automatically produce asymptotic Boolean independence. Theorem \ref{thm:monotone independence} will accomplish the same for monotone independence. In particular, this allows us to recover the asymptotic Boolean independence results of Male \cite[Cor.~5.12]{male} and Lenczewski \cite[Thm.~7.1]{l}. 
Note that there are now many results which show that finite rank matrices are asymptotically infinitesimally free from standard random matrix models, see Au \cite{Au21}, Dallaporta and Février \cite{df}, Collins, Hasebe, and Sakuma \cite{BHSakuma18}, Collins, Leid, and Sakuma \cite{cls}, and Shlyakhtenko \cite{shly}. This gives the results of this section a wide application. An example is provided in \cite[\S 5]{BT23}.
\begin{nota}
Suppose that we have an infinitesimal probability space $(\cA, \phi, \phi')$ and $j \in \cA$, an infinitesimal idempotent with $\phi'(j) \not = 0$.  Suppose further we have a unital subalgebra $\cB \subseteq \cA$ such that $\{ j\}$ and $\cB$ are infinitesimally free with respect to $(\phi, \phi')$. 
We let $\psi(a) = \phi'(aj)/\phi'(j)$. Then $\psi(1) = 1$ and we may consider the new non-commutative probability space $(\cA, \psi)$. Note that for $a \in \cB$ we have 
\[
\psi(a) = \phi'(aj)/\phi'(j) = \phi'(j)\phi(a)/\phi'(j) = \phi(a).
\]
Let $j^{(-1)} = j$ and $j^{(1)} = \jp:=1-j$.  We define $\mathcal{J}(\cB)$ to be the algebra  generated by
$$
\{j^{(\varepsilon_1)}a_1j^{(\varepsilon_2)}a_2\cdots j^{(\varepsilon_{k-1})}a_{k-1}j^{(\varepsilon_k)}\mid k\geq 0,\ \varepsilon_1,\dots,\varepsilon_k\in \{-1,1\}, \text{ and }a_1,a_2,\dots,a_k\in\cB \},
$$
and $\mathcal{J}_a(\cB)$ be the subalgebra of $\mathcal{J}(\cB)$ where we require 
$\varepsilon_1\neq \varepsilon_2 \neq \cdots \neq \varepsilon_{k-1}\neq \varepsilon_k$.
\end{nota}

\begin{nota}\label{nota:epsilon notation}
Given a string $\epsilon_1, \epsilon_2, \dots, \epsilon_n \in \{-1, 1\}$ with $\epsilon_1 = -1$, we let $\sigma_\epsilon \in I(n)$ be the interval partition where each $l$ such that $\epsilon_l = -1$ is the start of a new block. For example, suppose $n=9$ and $\epsilon_1 = \epsilon_2 = -1, \epsilon_3 =  \epsilon_4 = 1, \epsilon_5 =  \epsilon_6 = -1, \epsilon_7 = \epsilon_8 = \epsilon_9 =1$; then
$
\sigma_\epsilon = \{ (1),(2,3,4),(5),(6, 7, 8, 9)\}.
$
Given two strings  $(\epsilon_1, \dots, \epsilon_n)$ and $(\eta_1, \dots, \eta_n)$ in $\{-1, 1\}^{n}$, we say that $\epsilon \leq \eta$ if $\epsilon_k \leq \eta_k$ for $k = 1, \dots, n$. Then $\epsilon \leq \eta$ $\Leftrightarrow$ $\sigma_\epsilon \leq \sigma_\eta$. The strings $(\epsilon_1, \dots, \epsilon_n)$ parametrize $I(n)$.

Next suppose we have a string $\epsilon_1, \dots, \epsilon_n \in \{-1, 1\}$ with $\epsilon_1 = 1$; we then create a non-crossing partition $\sigma_\epsilon \in \NC(n)$ in which the blocks are cyclic intervals. By a \textit{cyclic interval} we mean one where we consider $n$ and $1$ to be adjacent (see \cite[Figure 5]{ahl}.)  To obtain $\sigma_\epsilon$, we  start a new interval at $l$ whenever $\epsilon_l = -1$. If $|\{ l \mid \epsilon_l = -1\}| \leq 1$ then $\sigma_\epsilon = 1_n$, otherwise  we will have $\sigma_\epsilon \not\in I(n)$. So we let $\CI(n)$ (as in \cite{ahl}) denote the set of the cyclic interval partitions, i.e. non-crossing partitions which when viewed on the circle are interval partitions.  If we have $a_1, \dots, a_8 \in \cA$  and  $(\epsilon_1, \epsilon_2, \epsilon_3, \epsilon_4, \epsilon_5, \epsilon_6, \epsilon_7, \epsilon_8) = (1, -1, -1, -1, 1, 1, -1, -1) $, then $\sigma_{\epsilon} =\big\{ (1,8),(2),(3), (4,5,6),(7)\big\}$ with 
\[
\beta_{\sigma_\epsilon}(a_1, a_2, a_3, a_4, a_5, a_6, a_7, a_8) = \beta_2(a_1, a_8) \beta_1(a_2) \beta_1(a_3) \beta_3(a_4, a_5, a_6) \beta_1(a_7).
\] 

This extends the notation of Boolean cumulants mentioned in  equation (\ref{eq:boolean cumulant}) in \S \ref{sec:boolean independence}. We define the following Boolean-like cumulant using cyclic interval partitions. 

\[
\tilde\beta_n(a_1, \dots, a_n)
\mbox{}:=\mbox{} (n-1) \, \phi(a_1 \cdots a_n) \,
- \kern-0.5em \sum_{\sigma \in \CI(n)} (-1)^{\#(\sigma)} \phi_\sigma(a_1, \dots, a_n).
\]
This is very similar to the definition of Boolean cumulants except we allow cyclic intervals and we attach weight $n$ to the partition $1_n$ instead of $1$, which is what the formula for the other partitions would produce. 
\end{nota}

\begin{prop}\label{prop:boolean cumulants}
Suppose $(\cA, \phi, \phi')$ is an infinitesimal probability space, $j \in \cA$ is an infinitesimal idempotent with $\phi'(j) \not = 0$, and $\cB \subseteq \cA$ is a unital subalgebra infinitesimally free from $j$. 
Then for all $a_1, \dots, a_n \in \cB$ we have
\begin{enumerate}\setlength{\itemsep}{0.5em}%

\item \label{prop:boolean cumulantsi}
$\ds
\phi'(j a_1 j a_2 j \cdots j a_n ) 
=
\phi'(j)\phi(a_1) \phi(a_2) \cdots \phi(a_n)
=
\phi'(j a_1 j a_2 j \cdots j a_n j)
$

and
$\ds
\phi'( a_1 j a_2 j \cdots j a_n ) \allowbreak
=
\phi'(j)\phi(a_1a_n) \phi(a_2) \cdots \phi(a_{n-1})
$.

\item \label{prop:boolean cumulantsii}
for any $\epsilon_1, \dots, \epsilon_n \in \{-1, 1\}$ with $|\{ l \mid \epsilon_l = -1\}| \geq 1$ let $\sigma_\epsilon$ be the partition in Notation \ref{nota:epsilon notation}, then

$\ds \phi'(j^{(\epsilon_1)} a_1 j^{(\epsilon_2)}a_2 \cdots  j^{(\epsilon_n)}a_n) = \phi'(j)\beta_{\sigma_\epsilon}(a_1, \dots, a_n)$.

\item \label{prop:boolean cumulantsii.v}
if $\epsilon_1 =-1$ then  $\sigma_\epsilon \in I(n)$ and $\ds \phi'(j^{(\epsilon_1)} a_1 j^{(\epsilon_2)}a_2 \cdots  j^{(\epsilon_n)}a_nj) = \phi'(j) \beta_{\sigma_\epsilon}(a_1, \dots, a_n)$.

\item \label{prop:boolean cumulantsiii}
$\ds
\phi'(j^\perp a_1 \jp a_2 \jp a_3 \jp \cdots \jp a_n)
=
\phi'(a_1 \cdots a_n) - \phi'(j)\tilde\beta_n(a_1, \dots, a_n).
$

\end{enumerate}

\end{prop}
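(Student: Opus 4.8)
The plan is to expand $\jp = 1-j$ in each of the $n$ slots, reducing everything to alternating words in $\cB$ and $\{j\}$ that can be evaluated by part~(\ref{prop:boolean cumulantsi}). Carrying out the expansion,
$$
\phi'(\jp a_1 \jp a_2 \cdots \jp a_n) = \sum_{S \subseteq [n]} (-1)^{|S|} \, \phi'(w_S),
$$
where $w_S$ is obtained from the word $a_1 a_2 \cdots a_n$ by inserting a factor $j$ immediately before $a_i$ for every $i \in S$. The term $S = \emptyset$ is $\phi'(a_1 \cdots a_n)$, so the task becomes to show $\sum_{\emptyset \ne S}(-1)^{|S|}\phi'(w_S) = -\phi'(j)\,\tilde\beta_n(a_1,\dots,a_n)$.

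The second step is to fix $S = \{i_1 < \cdots < i_k\}$ with $k \ge 1$ and group the $a$'s lying between consecutive inserted $j$'s, so that $w_S = b_0\, j\, b_1\, j\, b_2 \cdots j\, b_k$ with $b_0 = a_1 \cdots a_{i_1-1}\in\cB$ (and $b_0 = 1$ when $i_1 = 1$), $b_\ell = a_{i_\ell}\cdots a_{i_{\ell+1}-1}$ for $1 \le \ell < k$, and $b_k = a_{i_k}\cdots a_n$. Part~(\ref{prop:boolean cumulantsi}) — the first displayed identity there when $b_0 = 1$, the last one when $b_0 \ne 1$ — then gives $\phi'(w_S) = \phi'(j)\,\phi_{\sigma_S}(a_1,\dots,a_n)$, where $\sigma_S\in\CI(n)$ is the cyclic interval partition whose blocks start exactly at the elements of $S$: its blocks are the cyclic intervals $[i_1, i_2-1],\dots,[i_{k-1}, i_k-1]$ together with $\{1,\dots,i_1-1\}\cup\{i_k,\dots,n\}$, the last block being read in increasing index order — which is precisely the order in which $\phi(b_0 b_k)$ multiplies the relevant $a$'s. (When $i_1 = 1$ this $\sigma_S$ lies in $I(n)$; when $k = 1$ it degenerates to $1_n$; and $\#(\sigma_S) = k$ whenever $k \ge 2$.) The key combinatorial fact to verify here is that $S \mapsto \sigma_S$ restricts to a bijection from the subsets of $[n]$ of size $\ge 2$ onto $\{\sigma\in\CI(n) : \#(\sigma) \ge 2\} = \CI(n)\setminus\{1_n\}$, since a cyclic interval partition is recovered from, and has exactly as many blocks as, its set of block-starts.

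The third step is to assemble the three pieces. The $S = \emptyset$ term contributes $\phi'(a_1\cdots a_n)$; each of the $n$ singletons $S$ has $\sigma_S = 1_n$ and contributes $-\phi'(j)\phi(a_1\cdots a_n)$, for a total $-n\,\phi'(j)\phi(a_1\cdots a_n)$; and the subsets with $|S|\ge 2$ contribute $\phi'(j)\sum_{\sigma\in\CI(n)\setminus\{1_n\}}(-1)^{\#(\sigma)}\phi_\sigma(a_1,\dots,a_n)$ via the bijection. On the other hand, since $\#(1_n) = 1$, the definition of $\tilde\beta_n$ rearranges to $\tilde\beta_n(a_1,\dots,a_n) = n\,\phi(a_1\cdots a_n) - \sum_{\sigma\in\CI(n)\setminus\{1_n\}}(-1)^{\#(\sigma)}\phi_\sigma(a_1,\dots,a_n)$, so the sum of the last two pieces is exactly $-\phi'(j)\,\tilde\beta_n(a_1,\dots,a_n)$, and adding the $S=\emptyset$ term gives the assertion.

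I expect the main obstacle to be the bookkeeping of the second step: checking that part~(\ref{prop:boolean cumulantsi}) outputs precisely $\phi_{\sigma_S}$ with the wrap-around block read in the correct order, and that $S\mapsto\sigma_S$ is the claimed bijection onto $\CI(n)\setminus\{1_n\}$. It is exactly the degenerate cases ($b_0 = 1$, and the collapse of all $n$ singleton $S$'s to $1_n$) — together with the non-standard weight $n$ attached to $1_n$ in the definition of $\tilde\beta_n$ — that conspire to make the coefficient of $\phi(a_1\cdots a_n)$ come out to $n$ rather than $1$.
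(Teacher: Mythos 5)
Your argument for part (iv) is correct and is essentially the paper's own proof of that item: expand each $\jp = 1-j$, evaluate every resulting word by part (i), observe that the $n$ single insertions of $j$ all collapse onto $1_n$ while insertions at two or more positions are in bijection with the cyclic interval partitions having at least two blocks, and let the weight $n$ attached to $1_n$ in the definition of $\tilde\beta_n$ absorb the singleton contributions. Your care with the wrap-around block (the increasing-index convention agreeing with $\phi(b_0 b_k)$ from the second identity in (i)) is also right, and it matters because $\phi$ is not assumed tracial.

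The gap is that the proposition consists of four assertions and you prove only the last one, while using the first as an input. Part (i) is exactly where the hypotheses --- $\cB$ infinitesimally free from $j$, and $j$ an infinitesimal idempotent with $\phi'(j)\neq 0$ --- actually enter: one writes $\phi'(j a_1 j a_2 \cdots j a_n) = \sum_{\pi \in NC(2n)} \partial\kappa_\pi(j,a_1,\dots,j,a_n)$ and checks that the only surviving partition puts all the $j$'s into a single block, which must be the block carrying $\kappa'$ (every ordinary free cumulant of $j$ vanishes since $\phi(j^m)=0$), forcing each $a_i$ into a singleton; then $\kappa_n'(j,\dots,j) = m_n'(j) = \phi'(j)$ by Proposition \ref{prop:inf operator} together with idempotency, and similarly for the variants ending in $j$ or beginning with $a_1$. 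Without this (or some substitute) your expansion has no way to evaluate the individual terms $\phi'(w_S)$. Parts (ii) and (iii), which require the M\"obius-inversion argument over $I(n)$ and $\CI(n)$ to identify the result as $\phi'(j)\beta_{\sigma_\epsilon}(a_1,\dots,a_n)$, are not addressed at all. So, as a proof of the statement as a whole, the proposal is incomplete, even though the piece it does contain coincides with the paper's treatment of (iv).
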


\begin{proof}
By the infinitesimal moment cumulant formula we have
\[
\phi'(j a_1 j a_2 j \cdots j a_n )
=
\sum_{\pi \in NC(2n)} 
\partial \kappa_\pi (j, a_1, j, a_2, j, \dots, j, a_n ).
\]
We have $\partial \kappa_\pi(j, a_1, j, a_2, j, \dots, j, a_n ) = 0$ unless $\pi = \{(1, 3, \dots, 2n-1) ,(2), (4), \dots, (2n)\}$, because we cannot have an $a_i$ and a $j$ in the same block (by infinitesimal freeness) and we can have only one block for the $j$'s (by the infinitesimality of $j$). Thus 
\begin{align*}
\phi'&(ja_1j a_2 j \cdots j a_n)
=
\kappa'_{n}(j, \dots, j) \phi(a_1) \cdots \phi(a_n)  =
\phi'(j)   \phi(a_1) \cdots \phi(a_n).
\end{align*}
This proves the first equality in the first part of  $($\ref{prop:boolean cumulantsi}$)$. For the second equality we must have $\pi = \{(1, 3, \dots, 2n+1),(2),(4), \dots, (2n)\}$. 

For the second part of $($\ref{prop:boolean cumulantsi}$)$ we must have $\pi = \{(2, 4, \dots, 2n-2) ,(1), (3), \dots, (2n-1)\}$ in order to get a non-zero contribution, in which case we get 
\begin{align*}
\phi'&(a_1j a_2 j \cdots j a_n )
=
\kappa'_{n-1}(j, \dots, j) \phi(a_1a_n) \cdots \phi(a_{n-1})  =
\phi'(j)   \phi(a_1a_n) \cdots \phi(a_{n-1}).
\end{align*}

\medskip

For the proof of $($\ref{prop:boolean cumulantsii}$)$ we use the following notation. Let $\lambda_{-1} = j$ and $\lambda_{1}  = 1$, then $\jp = 1-j= \lambda_{1}-\lambda_{-1} $. Then 
\begin{align*}
\phi'(j^{\epsilon_1)} a_1 j^{(\epsilon_2)} a_2 j^{(\epsilon_3)} a_3 j^{(\epsilon_4)} \cdots j^{(\epsilon_n)} a_n ) 
=
\sum_{i_1, \dots, i_n \in \{-1, 1\}} (-1)^{k(i)}
\phi'(\lambda_{i_1} a_1 \lambda_{i_2}  a_2 \lambda_{i_3}  a_3 \lambda_{i_4}  \cdots \lambda_{i_n}  a_n )
\end{align*}
where in the sum we require that $i_l = -1$ whenever $\epsilon_l = -1$ and $k(i)$ is the number of times $i_l = -1$ minus the number of times $\epsilon_l = -1$.

Given a tuple $(i_1, i_2, \dots, i_n) \in \{-1, 1\}^{n}$ we create a non-crossing partition $\sigma_i$ by starting a new block at $l$ if $i_l = -1$, as in Notation \ref{nota:epsilon notation}. We next separate  the proof of $($\ref{prop:boolean cumulantsii}$)$ into two parts: the first is when $\epsilon_1 = -1$ and then $\sigma_\epsilon \in I(n)$; the second is when $\epsilon_1 = 1$ and then $\sigma_\epsilon \in \CI(n)$.

To start, let us assume that $\epsilon_1 = -1$. With this assumption, the tuples $(i_1, \dots, i_n)$ parametrize $I(n)$. The condition that $i_l = -1$ whenever $\epsilon_l = -1$ is equivalent to the condition that $\sigma_i \leq \sigma_\epsilon$. So summing over $i_1, \dots, i_n$ is the same as summing over $\sigma \in I(n)$ with $\sigma \leq \sigma_\epsilon$. With this notation we have $(-1)^{k(i)} = (-1)^{\#(\sigma_i) - \#(\sigma_\epsilon)}\ab = \mu_B(\sigma_i, \sigma_\epsilon)$ where $\mu_B$ is the Möbius function of $I(n)$.

Let  $\sigma \in I(n)$ be such that $\sigma \leq \sigma_\epsilon$. For each block $V = \{ l_1, \dots, l_m\}$ of $\sigma$ let $a_V = a_{l_1} \cdots a_{l_m}$. Then we have by $(i)$
\[
\phi'(\lambda_{i_1} a_1 \lambda_{i_2}  a_2 \lambda_{i_3}  a_3 \lambda_{i_4}  \cdots \lambda_{i_n}  a_n) = \phi'(j a_{V_1} j a_{V_2} j \cdots j a_{V_{k}}) = \phi'(j)\phi_{\sigma}(a_1, \dots, a_n). 
\]
Hence 
 \[
\phi'(j^{(\epsilon_1)} a_1 j^{(\epsilon_2)} a_2 j^{(\epsilon_3)} a_3 j^{(\epsilon_4)} \cdots j^{(\epsilon_n
)} a_n) 
=
\phi'(j)
\mathop{\sum_{\sigma \in I(n)}}_{\sigma \leq \sigma_\epsilon} \mu_B(\sigma, \sigma_\epsilon) \phi_\sigma(a_1, \dots, a_n)
=
\phi'(j)\beta_{\sigma_\epsilon}(a_1, \dots, a_n),
 \]
where the last equality follows from \cite[Prop.~10.6]{NS}.

Next, let us assume that $\epsilon_1 = 1$, then $\sigma_\epsilon \in \CI(n)$. The tuples $i_1, \dots, i_n \in \{-1, 1\}$ parametrize $\CI(n)$. The condition that $i_l = -1$ whenever $\epsilon_l = -1$ is equivalent to the condition that $\sigma_i \leq \sigma_\epsilon$. So summing over $i_1, \dots, i_n$ is the same as summing over $\sigma \in \CI(n)$ with $\sigma \leq \sigma_\epsilon$. Moreover, as above, $(-1)^{k(i)} = (-1)^{\#(\sigma_i) - \#(\sigma_\epsilon)}$.
Let  $\sigma \in \CI(n)$ be such that $\sigma \leq \sigma_\epsilon$. For each block $V = \{ l_1, \dots, l_m\}$ of $\sigma$ let $a_V = a_{l_1} \cdots a_{l_m}$. Let us write $V_1 = V_{1,+} \cup V_{1, -}$ where $V_1$ is the block of $\sigma$ containing $1$ and $n$, $V_{1,+}$ contains 1 and the points of $V_1$ moving clockwise, and $V_{1,-}$ contains $n$ and the points of $V_1$ moving counterclockwise.
Then we have by $($\ref{prop:boolean cumulantsi}$)$
\[
\phi'(\lambda_{i_1} a_1 \lambda_{i_2}  a_2 \lambda_{i_3}  a_3 \lambda_{i_4}  \cdots \lambda_{i_n}  a_n) = \phi'(a_{V_1,+} j a_{V_2} j \cdots j a_{V_{1,-}}) = \phi'(j)\phi_{\sigma}(a_1, \dots, a_n). 
\]

Hence 
\begin{align*}
\phi'(j^{(\epsilon_1)} a_1 j^{(\epsilon_2)} a_2 j^{(\epsilon_3)} a_3 j^{(\epsilon_4)} &\cdots j^{(\epsilon_n
)} a_n) 
=
\phi'(j)
\mathop{\sum_{\sigma \in \CI(n)}}_{\sigma \leq \sigma_\epsilon}  
(-1)^{\#(\sigma_i) - \#(\sigma_\epsilon)}   \phi_\sigma(a_1, \dots, a_n)\\
&\mbox{}\stackrel{(*)}{=}
\phi'(j)\beta_{\sigma_\epsilon}(a_1, \dots, a_n).
\end{align*}
To justify the last equality $(*)$,  note that each block of $\sigma_\epsilon$ is an interval (or a cyclic interval), so we can apply \cite[Prop.~10.6]{NS} to each interval and use that $\sigma \mapsto (-1)^{\#(\sigma) - \#(\sigma_\epsilon)}$ is multiplicative across intervals in the sense of \cite[Def.~10.16]{NS}.

To prove $($\ref{prop:boolean cumulantsii.v}$)$, note that the first part of the proof of $($\ref{prop:boolean cumulantsii}$)$ shows that $\sigma_\epsilon \in I(n)$ and here we  have by the same argument that
\[
\phi'(\lambda_{i_1} a_1 \lambda_{i_2}  a_2 \lambda_{i_3}  a_3 \lambda_{i_4}  \cdots \lambda_{i_n}  a_n j) = \phi'(j)\phi_{\sigma}(a_1, \dots, a_n). 
\]
This proves $($\ref{prop:boolean cumulantsii.v}$)$.

Proof of $($\ref{prop:boolean cumulantsiii}$)$. As above, we let $k(i)$ be the number of times $i_l = -1$. When $k(i) = 0$ we have 
$\phi'(\lambda_{i_1} a_1 \cdots \lambda_{i_n} a_n) = \phi'(a_1 \cdots a_n)$ and when $k(i) \geq 1$ we have 
$\phi'(\lambda_{i_1} a_1 \cdots \lambda_{i_n} a_n) = \phi_{\sigma_i}(a_1 \cdots a_n)$. Thus when $k(i) = 1$ there are $n$ tuples $(i_1, \dots, i_n)$ that all produce $\phi(a_1 \cdots a_n)$, because in these cases $\sigma_i = 1_n$. Then
\begin{align*}\lefteqn{
\phi'(j^\perp a_1 \cdots j^\perp a_n)
 =
\sum_{i_1, \dots, i_n \in \{-1, 1\}} (-1)^{k(i)}
\phi'(\lambda_{i_1} a_1 \cdots \lambda_{i_n} a_n)   } \\
& \mbox{} =
\phi'(a_1 \cdots a_n) + \mathop{\sum_{i_1, \dots, i_n \in \{-1, 1\}}}_{k(i) \geq 1} (-1)^{k(i)}
\phi'(\lambda_{i_1} a_1 \cdots \lambda_{i_n} a_n)  \\
& \mbox{} =
\phi'(a_1 \cdots a_n) - n \phi'(j) \phi(a_1 \cdots a_n)
+ \mathop{\sum_{i_1, \dots, i_n \in \{-1, 1\}}}_{k(i) \geq 2} (-1)^{k(i)}
\phi'(\lambda_{i_1} a_1 \cdots \lambda_{i_n} a_n) \\
& \mbox{} =
\phi'(a_1 \cdots a_n) - n \phi'(j)  \phi(a_1 \cdots a_n)
+
\phi'(j) \mathop{\sum_{\sigma \in \CI(n)}}_{\#(\sigma) \geq 2} (-1)^{\#(\sigma)}
\phi_{\sigma}(a_1, \dots,  a_n) \\
& \mbox{} =
\phi'(a_1 \cdots a_n) - (n-1) \phi'(j) \phi(a_1 \cdots a_n)
+
\phi'(j)\mathop{\sum_{\sigma \in \CI(n)}}_{} (-1)^{\#(\sigma)}
\phi_{\sigma}(a_1, \dots,  a_n) \\
&\mbox{} =
\phi'(a_1 \cdots a_n) - \phi'(j) \tilde\beta_n(a_1, \dots, a_n).
\end{align*}
\end{proof}

\begin{rem}\label{rem:spectral shift}
We mention here a surprising relation between an spectral shift functions in operator theory \cite{lifshits},  and \cite[see p. 138]{krein} and Boolean cumulants \cite[Prop. 3.2.1]{agp}, \cite[\S 6.2]{fujie-hasebe2}, and the Markov transform \cite[\S 6]{kerov}, \cite{fujie-hasebe}, and \cite{acg} .

We can form an abstraction of these examples using infinitesimal operators; indeed let $(\cA, \phi, \phi')$ be an infinitesimal probability space and $a, p \in \cA$ infinitesimally free with $p = p^2$ an idempotent with $\phi(p) = 1$ and $\phi'(p) = -1$ (i.e. $p^\perp$ is \emph{infinitesimal}). Let $G_\tau = \sum_{n=0}^\infty \tilde{m}_n z^{-(n+1)}$ be the formal power series with $\tilde{m}_n = \phi'(a^n) - \phi'((pap)^n)$ and $G_\mu(z) = \sum_{n=0}^\infty m_n z^{-(n+1)}$ be the formal power series with $m_n = \phi(a^n)$, then 
\[
G_\tau = -\frac{G_\mu'}{G_\mu}. 
\]

Krein \cite{krein} found an analytic representation of this relation in the form of spectral shift functions. Let $A$ be a self-adjoint bounded operator and $J$ be a rank 1 projection on a Hilbert space $\mathcal{H}$. Let $\xi \in \mathcal{H}$ be such that $\| \xi \| = 1$ and $J\xi = \xi$. Let $\mu$ be the spectral measure of $A$ with respect to the vector state $\omega_\xi(T) = \langle T\xi, \xi \rangle$: for all continuous functions $f$ on $\mathrm{Sp}(A)$ we have $\int f(t) \, d\mu(t) = \omega_\xi(f(A))$. The theorem of Krein, \cite[see p. 138]{krein}, states that there is a finite signed Borel measure $\tau$ such that $G_\tau = -G_\mu'/G_\mu$ and 
\[
\Tr\big(\, f(A) - f(J^\perp A J^\perp)\, \big) 
=
\int f(t) \, d\tau(t). 
\]
In Kerov's notation, $\tau$ is a Raleigh measure and $\mu$ is the Markov transform of $\tau$. This same result is discussed in Kerov \cite[\S 6]{kerov}, where another proof is given. Another example of a Markov transform  is given by Bufetov in  \cite[Thm.~2]{bufetov} where $\mu$ is the Marchenko-Pastur law and $\tau$ is the limiting eigenvalue distribution of a principal minor of a Wishart matrix.

\end{rem}

Let us recall that $\psi(a) = \phi'(aj)/\phi'(j)$. Then, by applying Proposition \ref{prop:boolean cumulants}, we obtain the following result.
\begin{cor}\label{cor:boolean cumulants}
Suppose $(\cA, \phi, \phi')$ is an infinitesimal probability space, $j \in \cA$ is an infinitesimal idempotent with $\phi'(j) \not = 0$, and $\cB \subseteq \cA$ is a unital subalgebra infinitesimally free from $j$. 
Then for all $a_1, \dots, a_n \in \cB$, and $\epsilon_1,\dots,\epsilon_n\in \{-1,1\}$, we have 
\begin{equation}\label{eqn: cor-boolean cumulants}
\psi(j^{(\epsilon_1)} a_1 j^{(\epsilon_2)}a_2 \cdots  j^{(\epsilon_n)}a_n) = \psi(j^{(\epsilon_1)} a_1 j^{(\epsilon_2)}a_2 \cdots  j^{(\epsilon_n)}a_n j) = \beta_{\sigma_\epsilon}(a_1, \dots, a_n).
\end{equation}
\end{cor}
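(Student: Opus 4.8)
The plan is to read the statement off Proposition \ref{prop:boolean cumulants} directly, using in addition only the idempotency $j^2 = j$.

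First I would dispose of the equality $\psi(j^{(\epsilon_1)} a_1 \cdots j^{(\epsilon_n)} a_n) = \psi(j^{(\epsilon_1)} a_1 \cdots j^{(\epsilon_n)} a_n j)$. By definition $\psi(x) = \phi'(xj)/\phi'(j)$, so the right-hand side equals $\phi'(j^{(\epsilon_1)} a_1 \cdots j^{(\epsilon_n)} a_n j\cdot j)/\phi'(j)$, and since $j^2 = j$ the product $j\cdot j$ collapses to $j$; hence both sides are $\phi'(j^{(\epsilon_1)} a_1 \cdots j^{(\epsilon_n)} a_n j)/\phi'(j)$.

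It then remains to prove $\phi'(j^{(\epsilon_1)} a_1 \cdots j^{(\epsilon_n)} a_n j) = \phi'(j)\,\beta_{\sigma_\epsilon}(a_1, \dots, a_n)$, and here I would split on $\epsilon_1$. When $\epsilon_1 = -1$ the word is $j a_1 j^{(\epsilon_2)} a_2 \cdots j^{(\epsilon_n)} a_n j$, which is precisely the left-hand side of Proposition \ref{prop:boolean cumulants}(\ref{prop:boolean cumulantsii.v}); that part gives the identity at once, with $\sigma_\epsilon \in I(n)$. When $\epsilon_1 = 1$ I would write $j^{(\epsilon_1)} = \jp = 1 - j$ and expand
\[
\phi'(\jp a_1 j^{(\epsilon_2)} a_2 \cdots j^{(\epsilon_n)} a_n j) = \phi'(a_1 j^{(\epsilon_2)} a_2 \cdots j^{(\epsilon_n)} a_n j) - \phi'(j a_1 j^{(\epsilon_2)} a_2 \cdots j^{(\epsilon_n)} a_n j).
\]
The subtracted term is again covered by (\ref{prop:boolean cumulantsii.v}), applied to the string $(-1, \epsilon_2, \dots, \epsilon_n)$, while the first term is a short moment--cumulant computation of the same flavour as Proposition \ref{prop:boolean cumulants}(\ref{prop:boolean cumulantsi}): only the non-crossing partitions with all the $j$'s in a single ($\kappa'$-)block survive, which forces the $\cB$-factors into the blocks of the relevant cyclic interval partition and reproduces a term $\phi'(j)\,\phi_\sigma(a_1, \dots, a_n)$ for each admissible $\sigma$.

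The step I expect to be delicate --- and the main obstacle --- is this last, $\epsilon_1 = 1$ case: one must check that the two contributions recombine, via the cyclic-interval M\"obius inversion already used in the $\epsilon_1 = 1$ half of the proof of Proposition \ref{prop:boolean cumulants}(\ref{prop:boolean cumulantsii}), precisely into $\beta_{\sigma_\epsilon}(a_1, \dots, a_n)$, with the block of $\sigma_\epsilon$ containing $1$ now being the cyclic interval that wraps around from $n$. Once that bookkeeping is set up the remaining manipulations are routine, and the first equality of \eqref{eqn: cor-boolean cumulants} has already been taken care of by $j^2 = j$.
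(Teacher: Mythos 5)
Your reduction of both sides of the displayed identity to $\phi'(j^{(\epsilon_1)}a_1\cdots j^{(\epsilon_n)}a_nj)/\phi'(j)$ via $j^2=j$, and your treatment of the case $\epsilon_1=-1$ by quoting Proposition \ref{prop:boolean cumulants}$($\ref{prop:boolean cumulantsii.v}$)$, are exactly the intended route (the paper's proof is nothing more than ``apply Proposition \ref{prop:boolean cumulants}''). The genuine problem is the step you yourself single out as delicate, the case $\epsilon_1=1$: the proposed cyclic-interval recombination cannot work, because the trailing $j$ forced by the definition $\psi(x)=\phi'(xj)/\phi'(j)$ destroys precisely the wrap-around that produces $\CI(n)$ in Proposition \ref{prop:boolean cumulants}$($\ref{prop:boolean cumulantsii}$)$ (that statement has \emph{no} trailing $j$). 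Concretely, in your expansion $\phi'(\jp a_1j^{(\epsilon_2)}\cdots j^{(\epsilon_n)}a_nj)=\phi'(a_1j^{(\epsilon_2)}\cdots a_nj)-\phi'(ja_1j^{(\epsilon_2)}\cdots a_nj)$, the first term is not governed by cyclic intervals: since the word now ends with $j$, the only contributing non-crossing partitions put all the $j$'s into a single $\kappa'$-block, and any block joining an $a$ standing before the first $j$ to an $a$ lying between two $j$'s would cross that block; hence the initial group of $a$'s factors out by itself and the first term equals $\phi'(j)\,\beta_{\sigma_{(-1,\epsilon_2,\dots,\epsilon_n)}}(a_1,\dots,a_n)$, which is exactly the value of the second term by part $($\ref{prop:boolean cumulantsii.v}$)$. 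The two contributions therefore cancel, giving $\psi(\jp a_1j^{(\epsilon_2)}\cdots j^{(\epsilon_n)}a_n)=0$ rather than $\beta_{\sigma_\epsilon}(a_1,\dots,a_n)$.

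No bookkeeping can repair this, because the identity you are trying to prove fails for $\epsilon_1=1$: take $n=1$ and $a_1=1\in\cB$; then $\psi(\jp\cdot 1)=\phi'(\jp j)/\phi'(j)=0$ because $\jp j=0$, while $\beta_{\sigma_\epsilon}(1)=\beta_1(1)=1$. (The same vanishing occurs for general $a_1,\dots,a_n$, e.g.\ $\psi(\jp a_1ja_2)=0\neq\beta_2(a_1,a_2)$.) The corollary is correct, and is the only case the paper actually uses, when $\epsilon_1=-1$, i.e.\ when the word begins with $j$; for a leading $\jp$ the correct statement is $\psi(\jp x)=\psi(x)-\psi(jx)=0$, which is the fact invoked later in the proof of Theorem \ref{thm:boolean independence}. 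So you should restrict your argument to $\epsilon_1=-1$, where it is complete, and record the vanishing for $\epsilon_1=1$ separately instead of trying to produce the cyclic-interval value; the cyclic intervals of Proposition \ref{prop:boolean cumulants}$($\ref{prop:boolean cumulantsii}$)$ only enter for $\phi'$ of words that do not end in $j$, which is never what $\psi$ computes.
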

\begin{thm}\label{thm:boolean independence}
Suppose $(\cA_i)_{i\in I}$ are unital algebras of $\cA$ that are infinitesimally free from $j$. Then 
$(j\mathcal{J}(\cA_i)j)_{i\in I}$ are Boolean independent with respect to $\psi.$
If we further assume that $(\cA_i)_{i\in I}$ are free, then we have $(\mathcal{J}_a(\cA_i))_{i\in I}$ are Boolean independent with respect to $\psi$. 
\end{thm}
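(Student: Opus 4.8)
The plan is to derive both assertions from Proposition~\ref{prop:boolean cumulants}, and more conveniently from Corollary~\ref{cor:boolean cumulants}, applied with the unital algebra $\cB:=\alg(1,\bigcup_i\cA_i)$, which is infinitesimally free from $j$ by hypothesis; the only other ingredient is the multiplicativity of $\beta_\sigma$ across the blocks of $\sigma$ together with the collapsing rules $j^2=j=jj$, $jj^\perp=j^\perp j=0$, $(j^\perp)^2=j^\perp$.

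\emph{First statement.} By multilinearity it is enough to verify \eqref{eq:boolean_independence} for $\psi$ on products $b_1\cdots b_m$ of spanning words $b_t\in j\mathcal{J}(\cA_{i_t})j$ with $i_1\neq\cdots\neq i_m$. The collapsing rules show that $j\mathcal{J}(\cA_i)j$ is spanned by $j$ (with $\psi(j)=\phi'(j^2)/\phi'(j)=1$) and by words $w=j\,a_1\,j^{(\eta_2)}a_2\cdots j^{(\eta_{k})}a_{k}\,j$ with $a_1,\dots,a_{k}\in\cA_i$. Multiplying such words, the trailing $j$ of one meets the leading $j$ of the next and $jj=j$, so $b_1\cdots b_m$ is again one word $j\,c_1\,j^{(\delta_2)}c_2\cdots j^{(\delta_N)}c_N\,j$ in which $\delta=-1$ at each of the $m-1$ junctions. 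By Corollary~\ref{cor:boolean cumulants}, $\psi(b_1\cdots b_m)=\beta_\sigma(c_1,\dots,c_N)$ where $\sigma\in I(N)$ starts a new block at position $1$ and at every position preceded by a $j$-letter equal to $j$; since this includes all the junctions, no block of $\sigma$ straddles two of the $b_t$'s, $\sigma$ restricts over the positions of $b_t$ to its attached interval partition $\sigma^{(t)}$, and multiplicativity of $\beta_\sigma$ gives $\beta_\sigma(c_1,\dots,c_N)=\prod_t\beta_{\sigma^{(t)}}(a^{(t)}_1,\dots,a^{(t)}_{k_t})=\prod_t\psi(b_t)$ (the last equality again by Corollary~\ref{cor:boolean cumulants}). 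No freeness among the $\cA_i$ is needed here.

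\emph{Second statement.} Reduced words of $\mathcal{J}_a(\cA_i)$ have the strictly alternating shape $j^{(\epsilon_1)}a_1 j^{(\epsilon_2)}\cdots a_k j^{(\epsilon_{k+1})}$ and need not begin or end with $j$, so the reduction to the first statement goes in two steps. First, $\psi$ kills anything ending in $j^\perp$ (since $(\,\cdot\,)\,j^\perp j=0$), and running the infinitesimal moment--cumulant expansion of $\phi'(xj)$ with $j^\perp=1-j$ as in the proof of Proposition~\ref{prop:boolean cumulants} gives $\psi(x)=\psi(jxj)$ for every $x$ in the algebra generated by $\bigcup_i\mathcal{J}_a(\cA_i)$; in particular $\psi(b_1\cdots b_m)=\psi(j\,b_1\cdots b_m\,j)$ and $\psi(jb_tj)=\psi(b_t)$ with $jb_tj\in j\mathcal{J}(\cA_{i_t})j$. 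Second, $j\,b_1\cdots b_m\,j=(jb_1j)(jb_2j)\cdots(jb_mj)$ whenever $b_t j^\perp b_{t+1}=0$ for all $t<m$, equivalently whenever no $b_t$ with $t<m$ ends in $j^\perp$; in that case the first statement, applied to the algebras $j\mathcal{J}(\cA_i)j$, finishes the proof --- still without using freeness.

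\emph{Second statement, remaining case, and the main obstacle.} When some $b_s$, $s<m$, ends in $j^\perp$ (standalone factors $\lambda j$ may be dropped and $\lambda j^\perp$ either annihilates the product or merges its neighbours, reducing $m$, so we may assume each $b_t$ is a genuine reduced word), we have $\psi(b_s)=0$, so the right side of \eqref{eq:boolean_independence} vanishes and it remains to show $\psi(b_1\cdots b_m)=0$. Expanding $\psi(b_1\cdots b_m)=\phi'(b_1\cdots b_m j)/\phi'(j)$ by Proposition~\ref{prop:boolean cumulants} presents it as a sum of terms $\phi'(j)\,\beta_\sigma(\dots)$ over (cyclic) interval partitions compatible with the $j$-letters of the word; a $j^\perp$-junction between $b_s$ and $b_{s+1}$ is the only kind of junction that does not force a block break, hence the only place a block of $\sigma$ may straddle and contain entries from two distinct $\cA_i$. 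The alternation defining $\mathcal{J}_a$ forces any such straddling block to be a contiguous string of $\cA_{i_s}$-entries followed by a contiguous string of $\cA_{i_{s+1}}$-entries, and for free $\cA_{i_s},\cA_{i_{s+1}}$ such a Boolean cumulant vanishes --- $\beta_\ell(u_1,\dots,u_p,v_1,\dots,v_q)=0$ for $u_r\in\cA_{i_s}$, $v_r\in\cA_{i_{s+1}}$, $p,q\ge1$ --- by an easy induction on $\ell$ from the moment--Boolean cumulant formula and $\phi(uv)=\phi(u)\phi(v)$. The surviving terms are then exactly those of $(jb_1j)\cdots(jb_mj)$, each carrying the factor $\psi(b_s)=0$. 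I expect this last case to be the main obstacle: the careful bookkeeping of which $\sigma$-blocks can straddle which boundaries --- in particular that a single block never straddles two boundaries of a middle factor that has at least two interior letters --- arranged so that the freeness-induced vanishing of the cross-algebra Boolean cumulants is precisely what closes the argument.
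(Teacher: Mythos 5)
Your proposal is correct and follows essentially the same route as the paper: both parts are derived from Corollary~\ref{cor:boolean cumulants}, the first via the block break that every $j$-junction forces in $\sigma_\epsilon$ together with multiplicativity of $\beta_{\sigma_\epsilon}$, and the second via $\psi(x)=\psi(jx)=\psi(xj)$ (so a factor with an outer $j^\perp$ has $\psi(b_s)=0$) combined with the vanishing, by freeness, of the Boolean cumulant whose block straddles a $j^\perp$-junction. The only point where you work harder than necessary is your ``main obstacle'': strict alternation inside each word of $\mathcal{J}_a(\cA_i)$ forces any straddling block to be exactly the pair formed by the last letter of $b_s$ and the first letter of $b_{s+1}$, so the paper needs only $\beta_2(a,b)=0$ for free $a,b$, rather than your more general (but true) vanishing of $\beta_\ell(u_1,\dots,u_p,v_1,\dots,v_q)$ for separated strings from free algebras.
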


\begin{proof}
Following Corollary \ref{cor:boolean cumulants}, we immediately obtain that $(j\mathcal{J}(\cA_i)j)_{i\in I}$ are Boolean independent Indeed,  if we let $$x_k=ja_1^{(k)}j^{(\epsilon_2^{(k)})}\cdots j^{(\epsilon_{m_k}^{(k)})}a_{m_k}^{(k)}j 
 \quad\text{ where }\quad \epsilon_2^{(k)},\dots,\epsilon_{m_k}^{(k)}=\pm1,\text{ and }a_1^{(k)},\dots,a_{m_k}^{(k)}\in\cA,
 $$
then by applying Equation \ref{eqn: cor-boolean cumulants}, we can easily to see that
$$\psi(x_1\cdots x_n)=\psi(x_1)\cdots \psi(x_n)
\text{ for each } 1\leq i \leq n.$$ 

Now we assume that $(\cA_i)_{i\in I}$ are free. Let $x_1,\dots,x_n$ be elements in $\mathcal{J}_a(\cA)$ where $x_k\in \mathcal{J}({A}_{i_k})$ with $i_1\neq i_2\neq \cdots \neq i_n$. To prove Boolean independence it is sufficient, by linearity,  to show that
\begin{equation}\label{eqn:Boolean indep}
    \psi(x_1\cdots x_n)=\psi(x_1)\cdots \psi(x_n).
\end{equation}
Following the definition of $\mathcal{J}_a(\cB)$, we note that 
for each $k$, 
$$
x_k=j^{(\varepsilon^{(k)})}a_1^{(k)}j^{(\varepsilon_1^{(k)})}\cdots a_{m_k}^{(k)}j^{(\varepsilon_{m_k}^{(k)})} \text{ for some }a_1^{(k)},\dots,a_{m_k}^{(k)}\in \mathcal{A}_{i_k} \text{ and }
\varepsilon^{(k)}\neq\varepsilon_1^{(k)}\neq \cdots \neq \varepsilon_{m_k}^{(k)}. 
$$
Firstly, we may assume $\varepsilon^{(k+1)}=\varepsilon_{m_k}^{(k)}$ for all $k$; otherwise both sides of Equation \eqref{eq:boolean_independence} are trivially zero. In addition, if all $\varepsilon^{(k)}=\varepsilon_{m_k}^{(k)}= -1$ for all $k$, then 
$(\mathcal{J}_a(\cA)_{i_k})_k$ are Boolean independent by applying Corollary \ref{cor:boolean cumulants} directly. Indeed, 
$$
\psi(x_1\cdots x_n)=\prod_{k=1}^n\prod_{s=1}^{m_k/2}\beta_2(a_{2s-1}^{(k)},a_{2s}^{(k)})=\psi(x_1)\cdots \psi(x_n).
$$
Now, we will show that both sides of Equation \eqref{eqn:Boolean indep} are zero whenever there is $k$ such that $\varepsilon^{(k)}=1$ or $\varepsilon_{m_k}^{(k)}=1$. Without loss of generality, we assume $\varepsilon^{(k)}=1$. Then we note that
\begin{eqnarray*}
    \psi(x_1\cdots x_n)&=&\psi(Z_1 \cdot ja_{m_{k-1}}^{(k-1)}\jp a_{1}^{(k)}j\cdot Z_2) \\
    &=& \beta_{\sigma_1}(a_1^{(1)},\dots,a_{m_{k-1}-1}^{(k-1)})\cdot \beta_2(a_{m_{k-1}}^{(k-1)},a_1^{(k)})\cdot \beta_{\sigma_2}(a_2^{(k)},\dots, a_{m_n}^{(n)})
\end{eqnarray*}
where $Z_1=x_1\cdots x_{k-2}j^{(\varepsilon^{(k-1)})}a_1^{(k-1)}j^{(\varepsilon_1^{(k-1)})}\cdots a_{m_{k-1}-1}^{(k-1)}j$ and
$
Z_2 = ja_2^{(k)}\cdots a_{m_k}^{(k)}j^{(\varepsilon_{m_k}^{(k)})} x_{k+1}\cdots x_n,
$
and $\sigma_i$ is the interval partition respect to $Z_i$ for $i=1,2$. Since $a_{m_{k-1}}^{(k-1)}$ and $a_1^{(k)}$ are free, we have $\beta_2(a_{m_{k-1}}^{(k-1)},a_1^{(k)})=0$, which implies that $\psi(x_1\cdots x_n)=0$. 

On the other hand, 
\begin{eqnarray*}
\psi(x_k)&=&\psi(\jp a_1^{(k)}ja_2^{(k)}\cdots  a_{m_k}^{(k)}j^{(\varepsilon_{m_k}^{(k)})}) \\ 
&=& \psi(a_1^{(k)}ja_2^{(k)}\cdots  a_{m_k}^{(k)}j^{(\varepsilon_{m_k}^{(k)})})-\psi(ja_1^{(k)}ja_2^{(k)}\cdots  a_{m_k}^{(k)}j^{(\varepsilon_{m_k}^{(k)})}) =0
\end{eqnarray*}
due to the fact that $\psi(x)=\psi(jx)$ for all $x\in \mathcal{J}(\mathcal{B})$ whenever $\cB$ is any algebra infinitesimally free from $j$. Hence, we conclude that both sides of Equation \eqref{eqn:Boolean indep} are zero, which complete the proof.  
\end{proof}

\begin{thm}\label{thm:monotone independence}
Suppose that $\cB$ and $\mathcal{C}$ are unital subalgebras of $\cA$ and that $\{\cB,\mathcal{C}\} $ are infinitesimally free from $j$. Then $(j\mathcal{J}(\cB)j, \mathcal{C})$ are monotone independent. Furthermore, if $\cB$ and $\mathcal{C}$ are free, then $(\mathcal{J}_a(\cB),  \mathcal{C})$ are monotone independent. 
\end{thm}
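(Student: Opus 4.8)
The plan is to work in the non-commutative probability space $(\cA,\psi)$, where $\psi(a)=\phi'(aj)/\phi'(j)$, and to show in both cases that the monotone relation is $j\mathcal{J}(\cB)j \prec \mathcal{C}$, respectively $\mathcal{J}_a(\cB)\prec\mathcal{C}$; that is, a factor from $\mathcal{C}$ which is sandwiched between factors from the other algebra may be replaced by its scalar $\psi$-value. Writing $\cD:=\alg(\cB\cup\mathcal{C})$, the hypothesis says that $\cD$ is infinitesimally free from $j$, so Proposition \ref{prop:boolean cumulants} and Corollary \ref{cor:boolean cumulants} apply verbatim with $\cB$ replaced by $\cD$; also $\psi|_{\mathcal{C}}=\phi|_{\mathcal{C}}$, by the same computation made for $\cB$ just before Proposition \ref{prop:boolean cumulants}. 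By multilinearity, and by absorbing consecutive factors from the same algebra into one, it suffices to verify the defining identity of monotone independence on alternating words.

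For the first statement I would first put a general element of $j\mathcal{J}(\cB)j$ into normal form: expanding every $\jp=1-j$, merging consecutive $j$'s (as $j^2=j$) and merging $\cB$-factors that become adjacent, it is a linear combination of words $j b_1 j b_2 j\cdots j b_m j$ with $b_i\in\cB$. In an alternating word $z_1\cdots z_n$, inserting a factor $c\in\mathcal{C}$ between two such elements produces the subword $\cdots j b_m\,(jcj)\,b_1' j\cdots$, so after normal-forming each $z_i$ the product $z_1\cdots z_n j$ is a linear combination of words $j d_1 j d_2 j\cdots j d_N j$ whose pieces $d_1,\dots,d_N\in\cD$ are exactly the $b$'s and the $c$'s, in order. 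Proposition \ref{prop:boolean cumulants}\,(\ref{prop:boolean cumulantsi}), applied over $\cD$, then gives $\psi(z_1\cdots z_n)=\phi(d_1)\cdots\phi(d_N)$. Hence $\psi$ is in fact multiplicative on $j\mathcal{J}(\cB)j$, and deleting a sandwiched $\mathcal{C}$-factor $c$ deletes precisely the factor $\phi(c)=\psi(c)$ — the two neighbouring $j$'s simply merge — which is the identity defining $j\mathcal{J}(\cB)j\prec\mathcal{C}$; the end cases $k=1,n$ are the same computation.

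For the second statement the strategy is unchanged but the combinatorics is heavier, because an element of $\mathcal{J}_a(\cB)$ may begin or end with $\jp$, so after expanding $\jp=1-j$ a $\cB$-factor may sit next to a $\mathcal{C}$-factor, or to a $\cB$-factor of a neighbouring element, with no intervening $j$. I would follow the second half of the proof of Theorem \ref{thm:boolean independence}: reduce first to the case in which the boundary exponents match ($\varepsilon^{(k+1)}=\varepsilon^{(k)}_{m_k}$), since otherwise both sides of the monotone identity vanish; then use Corollary \ref{cor:boolean cumulants} over $\cD$ to write $\psi$ of the alternating word as a product of Boolean cumulants $\beta_{\sigma_\epsilon}$ of the $\cB$- and $\mathcal{C}$-letters, $\sigma_\epsilon$ being read off from the $\epsilon$-pattern. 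Since $\cB$ and $\mathcal{C}$ are free, every Boolean cumulant whose arguments come from both algebras vanishes (the Boolean-cumulant characterization of Boolean independence, §\ref{sec:boolean independence}), which forces the surviving partitions to break at each inserted $\mathcal{C}$-letter; each sandwiched $c_k$ then becomes its own block $\beta_1(c_k)=\phi(c_k)=\psi(c_k)$, and the remaining blocks reassemble into $\psi$ of the word with $c_k$ removed — exactly $\mathcal{J}_a(\cB)\prec\mathcal{C}$.

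The main obstacle is the bookkeeping in the second part: one must track where the (cyclic) interval partitions produced by Corollary \ref{cor:boolean cumulants} break, both at the $\jp$-boundaries of the $\mathcal{J}_a(\cB)$-factors and at the inserted $\mathcal{C}$-factors, and check that once the mixed Boolean cumulants are killed by freeness the survivors assemble into precisely the monotone moment recursion — including the end conventions $k=1$, $k=n$ and the boundary cases in which a factor ends in $\jp$ (for which $\psi$ is already zero). The first part, once the normal form is written down, is essentially immediate.
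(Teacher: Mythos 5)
Your first half is essentially the paper's own argument: normal-forming elements of $j\mathcal{J}(\cB)j$, applying Proposition \ref{prop:boolean cumulants}/Corollary \ref{cor:boolean cumulants} over $\cD=\alg(\cB\cup\mathcal{C})$ (which is legitimate, since the hypothesis is that $\cD$ is infinitesimally free from $j$), and observing that a sandwiched $c\in\mathcal{C}$ appears as $jcj$, hence as a singleton block contributing $\beta_1(c)=\phi(c)=\psi(c)$. That part is fine.

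The second half has a genuine gap: you justify the key step by claiming that, because $\cB$ and $\mathcal{C}$ are free, ``every Boolean cumulant whose arguments come from both algebras vanishes.'' That vanishing characterizes \emph{Boolean} independence (\S\ref{sec:boolean independence}), not freeness, and free unital subalgebras are not Boolean independent; e.g.\ for free centred $b\in\cB$, $c\in\mathcal{C}$ one has $\beta_4(b,c,c,b)=\phi(b^2)\phi(c^2)\neq 0$. Worse, the mixed cumulant your argument must actually handle in the $\varepsilon=+1$ boundary case, where the word contains $\cdots a_{m_k}^{(k)}\,\jp\, c_k\,\jp\, a_1^{(k+1)}\cdots$ and Corollary \ref{cor:boolean cumulants} produces the block factor $\beta_3\bigl(a_{m_k}^{(k)},c_k,a_1^{(k+1)}\bigr)$, does \emph{not} vanish: freeness of $c_k$ from $\cB$ gives the factorization $\beta_3\bigl(a_{m_k}^{(k)},c_k,a_1^{(k+1)}\bigr)=\beta_2\bigl(a_{m_k}^{(k)},a_1^{(k+1)}\bigr)\beta_1(c_k)$, which is generically nonzero. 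If your claimed vanishing were true, the left-hand side of the monotone identity would be $0$ in this case while the right-hand side $\psi(c_k)\,\psi(\cdots)$ is not, so the argument as written proves the wrong identity exactly where freeness is needed. The repair is to replace the false vanishing principle by this sandwich factorization (this is precisely what the paper's proof of Theorem \ref{thm:monotone independence} does, mirroring the $\beta_2$-vanishing used in Theorem \ref{thm:boolean independence}): the block containing $c_k$ splits off $\beta_1(c_k)=\psi(c_k)$, the leftover $\beta_2\bigl(a_{m_k}^{(k)},a_1^{(k+1)}\bigr)$ is what $\jp 1\jp=\jp$ produces in the word with $c_k$ deleted, and the $-1/-1$ and mismatched boundary cases go through as you describe.
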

\begin{proof}
That $(j\mathcal{J}(\cB)j)\prec \mathcal{C}$ follows directly from Corollary \ref{cor:boolean cumulants}  and the following argument. Indeed,  if we let $$x_k=ja_1^{(k)}j^{(\epsilon_2^{(k)})}\cdots j^{(\epsilon_{m_k}^{(k)})}a_{m_k}^{(k)}j 
 \quad\text{ where }\quad \epsilon_2^{(k)},\dots,\epsilon_{m_k}^{(k)}=\pm1,\text{ and }a_1^{(k)},\dots,a_{m_k}^{(k)}\in\cA,
 $$
then by applying Equation \ref{eqn: cor-boolean cumulants}, we can easily to see that
$$\psi(x_1\cdots x_n)=\psi(x_1\cdots x_{i-1}\psi(x_i)x_{i+1}\cdots x_n)
\text{ for each } 1\leq i \leq n.$$ 

Now assume $\cB$ and $\mathcal{C}$ are free, we are going to show $(\mathcal{J}_a(\cB),\mathcal{C}$) are monotone independent.

We shall show that for any $k$ we have
\begin{equation}\label{eqn:mono-indep}
    \psi(c_0x_1c_1\cdots x_nc_n)=\psi(c_0x_1c_1\cdots x_{k}\psi(c_k)x_{k+1}\cdots c_{n-1} x_n c_n).
\end{equation}
where $c_0,\dots,c_n\in\mathcal{C}$ and $x_1,\dots,x_n\in\mathcal{J}_a(\cB)$. Again following the definition of $\mathcal{J}_a(\cB)$, we have 
$$
x_k=j^{(\varepsilon^{(k)})}a_1^{(k)}j^{(\varepsilon_1^{(k)})}\cdots a_{m_k}^{(k)}j^{(\varepsilon_{m_k}^{(k)})} \text{ for some }a_1^{(k)},\dots,a_{m_k}^{(k)}\in \cB \text{ and }
\varepsilon^{(k)}\neq\varepsilon_1^{(k)}\neq \cdots \neq \varepsilon_{m_k}^{(k)}. 
$$
By linearity and induction on $n$ this will prove monotone independence.

Without lost of generality, we assume $\varepsilon^{(k+1)}=\varepsilon_{m_k}^{(k)}$; otherwise, both sides of Equation \eqref{eqn:mono-indep} are trivially zero. For the case $\varepsilon^{(k+1)}=\varepsilon_{m_k}^{(k)}=-1$, 
\begin{eqnarray*}
    &&\psi(c_0x_1c_1\cdots x_k c_k x_{k+1}\cdots x_nc_n) \\
    &=& \beta_{\sigma_1}(c_0,a_1^{(1)},\dots,a_{m_{k}}^{(\varepsilon_{m_k}^{(k)})})\beta_1(c_k)\beta_{\sigma_2}(a_1^{(k+1)},\dots,c_n) \\
    &=& \beta_{\sigma_1}(c_0,a_1^{(1)},\dots,a_{m_{k}}^{(\varepsilon_{m_k}^{(k)})})\psi(c_k)\beta_{\sigma_2}(a_1^{(k+1)},\dots,c_n) \\
    &=& \psi(c_k)\psi(c_0x_1c_1\cdots x_k x_{k+1}\cdots x_nc_n)
\end{eqnarray*} 
where 
$\sigma_1$ is the interval partition with respect to $c_0x_1c_1\cdots x_k$ and $\sigma_2$ is the interval partition with respect to $x_{k+1}c_{k+1}\cdots x_n c_n$. 
Now, let us consider the case $\varepsilon^{(k+1)}=\varepsilon_{m_k}^{(k)}=1$, we have by Corollary \ref{cor:boolean cumulants} 
\begin{eqnarray*}
    &&\psi(c_0x_1c_1\cdots x_k c_k x_{k+1}\cdots x_nc_n) \\
    &=& \psi(c_0x_1c_1\cdots a_{m_{k}-1}^{(\varepsilon_{m_k-1}^{(k)})}ja_{m_{k}}^{(\varepsilon_{m_k}^{(k)})}\jp c_k \jp a_1^{(k+1)}ja_2^{(k+1)}\cdots x_nc_n  ) \\
    &=&\beta_{\sigma_1}(c_0,x_1,\dots,a_{m_{k}-1}^{(\varepsilon_{m_k-1}^{(k)})})\beta_3(a_{m_{k}}^{(\varepsilon_{m_k}^{(k)})},c_k, a_1^{(k+1)})\beta_{\sigma_2}(a_2^{(k+1)},\dots,c_n)
\end{eqnarray*}
where $\sigma_1$ is the interval partition with respect to $c_0x_1c_1\cdots a_{m_{k}-1}^{(\varepsilon_{m_k-1}^{(k)})}j$ and $\sigma_2$ is the interval partition with respect to $ja_2^{(k+1)}\cdots x_n c_n$. The freeness of $\cB$ and $\mathcal{C}$ implies that 
$$
\beta_3(a_{m_{k}}^{(\varepsilon_{m_k}^{(k)})},c_k ,a_1^{(k+1)}) = \beta_2(a_{m_{k}}^{(\varepsilon_{m_k}^{(k)})},a_1^{(k+1)})\beta_1(c_k) = \beta_2(a_{m_{k}}^{(\varepsilon_{m_k}^{(k)})},a_1^{(k+1)})\psi(c_k).
$$
Thus,
\begin{eqnarray*}
    &&\psi(c_0x_1c_1\cdots x_k c_kx_{k+1}\cdots x_nc_n) \\
    &=& \psi(c_k)\beta_{\sigma_1}(c_0,x_1,\dots,a_{m_{k}-1}^{(\varepsilon_{m_k-1}^{(k)})})\beta_2(a_{m_{k}}^{(\varepsilon_{m_k}^{(k)})},a_1^{(k+1)})\beta_{\sigma_2}(a_2^{(k+1)},\dots,c_n) \\
    &=& \psi(c_0x_1c_1\cdots a_{m_{k}-1}^{(\varepsilon_{m_k-1}^{(k)})}ja_{m_{k}}^{(\varepsilon_{m_k}^{(k)})}\jp 1 \jp a_1^{(k+1)}ja_2^{(k+1)}\cdots x_nc_n ) \\
    &=& \psi(c_k)\psi(c_0x_1c_1\cdots x_k x_{k+1}\cdots x_nc_n).
\end{eqnarray*}
Hence, we show Equation \eqref{eqn:mono-indep} holds. 
\end{proof}


\section{The Distributions of Some Polynomials of Boolean Independent Operators}
         \label{sec:boolean-independent}

Suppose that $(\cA,\phi)$ is a non-commutative probability space and $x_1$ and $x_2$ are elements in $\cA$ that are Boolean independent. In this final section, we show how to find the distribution of $ax_1x_2+bx_2x_1$ where $a$ and $b$ are complex numbers. In particular, we will obtain the distributions of the commutator and anticommutator of Boolean independent random variables.

First note that by applying \cite[Prop.~10.11]{NS} and the analogous argument of \cite[Thm.~11.12]{NS}, we have the products as arguments formula in the Boolean, we state it as follows: 

\begin{thm}\label{thm:boolean cumulants products}
Suppose $(\cA,\varphi)$ is a non-commutative probability space. Let $m,n\in \mathbb{N}$ and $1\leq i_1 <i_2 <\cdots < i_m=n$ be given, and we set
$$
\hat{0}_m=\{(1,\dots,i_1),\dots,(i_{m-1}+1,\dots,i_m)\}\in I(m).
$$
Then we have
\begin{equation*}
\beta_m(a_1\cdots a_{i_1},\dots,a_{i_{m-1}+1}\cdots a_{i_m})=\sum_{
\substack{ \pi\in I(n)\\  \pi\vee \hat{0}_m=1_n}}\beta_{\pi}(a_1,\dots,a_n).    
\end{equation*}
\end{thm}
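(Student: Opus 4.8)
The plan is to run the Möbius-inversion argument used for the free analogue \cite[Thm.~11.12]{NS}, with the lattice $NC(n)$ replaced everywhere by the lattice $I(n)$ of interval partitions; the interval case is in fact the simpler of the two. Write $A_k = a_{i_{k-1}+1}\cdots a_{i_k}$ (with $i_0 = 0$), let $\mu_{I(n)}$ be the Möbius function of $I(n)$, and observe that $\hat{0}_m$ is an interval partition of $[n]$ with exactly $m$ blocks. Two structural facts drive the computation. First, since moments factorize over the blocks of an interval partition, applying \eqref{eq:boolean cumulant} blockwise upgrades it to $\varphi_\pi(a_1,\dots,a_n) = \sum_{\sigma \in I(n),\ \sigma \leq \pi}\beta_\sigma(a_1,\dots,a_n)$ for every $\pi \in I(n)$, and Möbius inversion on $I(n)$ then gives $\beta_\pi(a_1,\dots,a_n) = \sum_{\sigma \leq \pi}\mu_{I(n)}(\sigma,\pi)\,\varphi_\sigma(a_1,\dots,a_n)$. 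Second, replacing each index $k \in [m]$ by the $k$-th block of $\hat{0}_m$ defines an order isomorphism from $I(m)$ onto the interval $[\hat{0}_m, 1_n] \subseteq I(n)$: an interval partition of $[n]$ lies above $\hat{0}_m$ exactly when each of its blocks is a union of consecutive blocks of $\hat{0}_m$, which is precisely the data of an interval partition of $[m]$. Under this isomorphism, $\rho \leftrightarrow \tilde\rho$, one has $\varphi_\rho(A_1,\dots,A_m) = \varphi_{\tilde\rho}(a_1,\dots,a_n)$ and $\mu_{I(m)}(\rho,1_m) = \mu_{I(n)}(\tilde\rho,1_n)$. This is the content of \cite[Prop.~10.11]{NS} specialized to interval partitions.

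With these in hand the computation is short. Applying Möbius inversion on $I(m)$ to $\beta_m(A_1,\dots,A_m)$ and transporting the sum along the isomorphism,
\[
\beta_m(A_1,\dots,A_m) = \sum_{\rho \in I(m)}\mu_{I(m)}(\rho,1_m)\,\varphi_\rho(A_1,\dots,A_m) = \sum_{\substack{\tilde\rho \in I(n)\\ \tilde\rho \geq \hat{0}_m}}\mu_{I(n)}(\tilde\rho,1_n)\,\varphi_{\tilde\rho}(a_1,\dots,a_n).
\]
Now substitute $\varphi_{\tilde\rho}(a_1,\dots,a_n) = \sum_{\sigma \leq \tilde\rho}\beta_\sigma(a_1,\dots,a_n)$, interchange the two summations, and collect the coefficient of each $\beta_\sigma$:
\[
\beta_m(A_1,\dots,A_m) = \sum_{\sigma \in I(n)}\beta_\sigma(a_1,\dots,a_n)\Bigl(\sum_{\sigma \vee \hat{0}_m \leq \tilde\rho \leq 1_n}\mu_{I(n)}(\tilde\rho,1_n)\Bigr).
\]
By the defining property of the Möbius function the inner sum is $1$ if $\sigma \vee \hat{0}_m = 1_n$ and $0$ otherwise, so only the interval partitions $\sigma$ with $\sigma \vee \hat{0}_m = 1_n$ survive, which is exactly the identity asserted in Theorem \ref{thm:boolean cumulants products}.

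I do not expect a serious obstacle; the only step that needs care is the second structural fact, namely checking that interval partitions above $\hat{0}_m$ are order-isomorphic to $I(m)$ and that moments and Möbius values match across this isomorphism. Unlike the non-crossing case in \cite[Thm.~11.12]{NS} no complementation map intervenes here, because being an interval partition already forces a partition above $\hat{0}_m$ to merge only consecutive blocks of $\hat{0}_m$, and Möbius functions of isomorphic posets agree by definition. If one wishes to avoid invoking Möbius inversion at all, an equivalent route is to expand both sides of the claimed identity into sums of moments using \eqref{eq:boolean cumulant} and verify that the two resulting sums over $I(n)$ coincide; this reduces to the same bijection.
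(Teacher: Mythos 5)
Your argument is correct and is essentially the proof the paper has in mind: the paper disposes of this theorem by invoking \cite[Prop.~10.11]{NS} together with the argument of \cite[Thm.~11.12]{NS} transplanted to interval partitions, which is exactly what you carry out — the order isomorphism $I(m)\cong[\hat{0}_m,1_n]\subseteq I(n)$, compatibility of moments and M\"obius values across it, and the final M\"obius-function summation isolating the partitions with $\sigma\vee\hat{0}_m=1_n$. No gaps; your observation that no complementation map is needed (unlike the non-crossing case) is also accurate.
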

\subsection{The linear span Case}

Let $(\cA,\varphi)$ be a non-commutative probability space. Assume $x_1$ and $x_2$ are elements in $\cA$ that are Boolean independent and $a,b\in\mathbb{C}$. We let $g=ax_1x_2+bx_2x_1$. In this Subsection, we will provide the distribution of $g$ by computing its Boolean cumulants and also moments. 

Let us define $c=x_1x_2$, and we set $c^{(1)}=x_1x_2$ and $c^{(-1)}=x_2x_1.$ We first show the following result: 
\begin{lem}\label{Lem: Boolean poly cumulants}
For all $\epsilon_1,\dots,\epsilon_n\in \{-1,1\}$, we have
\begin{eqnarray*}
\beta_n(c^{(\epsilon_1)},\dots,c^{(\epsilon_n)}) =0
\end{eqnarray*}
unless $\epsilon_i+\epsilon_{i+1}=0$ for $1\leq i <n$. Moreover, 
\begin{eqnarray*}
\beta_n(c^{(\epsilon_1)},\dots,c^{(\epsilon_n)})=
\begin{cases}
\beta_2(x_2)\beta_1(x_1)^2\Big(\beta_2(x_1)\beta_2(x_2)\Big)^{(n-2)/2}
 & \text{ if }(\epsilon_1,\dots,\epsilon_n)=(1,-1,\dots,1,-1) \\
\beta_2(x_1)\beta_1(x_2)^2\Big(\beta_2(x_1)\beta_2(x_2)\Big)^{(n-2)/2}
 & \text{ if }(\epsilon_1,\dots,\epsilon_n)=(-1,1,\dots,-1,1) \\
\end{cases}
\end{eqnarray*}
for $n$ is even, and 
\begin{equation*}
\beta_n(c^{(\epsilon_1)},\dots,c^{(\epsilon_n)}) =
\beta_1(x_1)\beta_1(x_2)\Big(\beta_2(x_1)\beta_2(x_2)\Big)^{(n-1)/2}
\end{equation*}
for $n$ is odd with $(\epsilon_1,\dots,\epsilon_n)=(1,-1,\dots,1,-1,1)$ or $(-1,1,\dots,-1,1,-1)$.
\end{lem}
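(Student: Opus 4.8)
The plan is to reduce the computation to the Boolean products-as-arguments formula, Theorem \ref{thm:boolean cumulants products}, and then carry out a combinatorial analysis of which interval partitions contribute. First I would spell out each factor as a two-letter word: writing $c^{(\epsilon_k)} = y_{2k-1} y_{2k}$ we have $(y_{2k-1}, y_{2k}) = (x_1, x_2)$ when $\epsilon_k = 1$ and $(y_{2k-1}, y_{2k}) = (x_2, x_1)$ when $\epsilon_k = -1$. Applying Theorem \ref{thm:boolean cumulants products} with $\hat{0}_n = \{(1,2),(3,4),\dots,(2n-1,2n)\} \in I(2n)$ gives
\[
\beta_n(c^{(\epsilon_1)},\dots,c^{(\epsilon_n)}) = \sum_{\substack{\pi \in I(2n) \\ \pi \vee \hat{0}_n = 1_{2n}}} \beta_\pi(y_1,\dots,y_{2n}).
\]
Since $x_1$ and $x_2$ are Boolean independent, any mixed Boolean cumulant vanishes, so $\beta_\pi(y_1,\dots,y_{2n}) = 0$ unless every block of $\pi$ --- being an interval --- consists of letters all equal to $x_1$ or all equal to $x_2$; call such a $\pi$ \emph{admissible}. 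I would record the ``colour'' of each slot: slot $2k-1$ holds $x_1$ precisely when $\epsilon_k = 1$, and slot $2k$ holds the other letter; consequently slots $2k-1$ and $2k$ always hold different letters, while slots $2k$ and $2k+1$ hold the same letter exactly when $\epsilon_k \ne \epsilon_{k+1}$.

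Next I would dispose of the vanishing claim. If $\epsilon_k = \epsilon_{k+1}$ for some $k$, then slots $2k$ and $2k+1$ hold different letters, and no block of $\hat{0}_n$ straddles the gap between them; hence for $\pi \vee \hat{0}_n = 1_{2n}$ some block of $\pi$ (necessarily an interval) must contain both $2k$ and $2k+1$, so that $\pi$ is inadmissible and the corresponding term is $0$. Thus the whole sum vanishes whenever the $\epsilon_i$ do not strictly alternate, which is the first assertion.

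Finally, assuming the $\epsilon_i$ alternate, I would pin down the contributing $\pi$. No admissible block can contain three consecutive slots, since any three consecutive slots include the two letters of some $c^{(\epsilon_j)}$, which differ; hence all blocks of an admissible $\pi$ have size $\le 2$, and a size-two block must be of the form $\{2k, 2k+1\}$ (the pairs $\{2k-1, 2k\}$, and the singletons $\{1\}$, $\{2n\}$'s neighbours, are never monochromatic) --- and here each such pair is admissible because the signs alternate. Since only the block $\{2k, 2k+1\}$ can bridge the gap between slots $2k$ and $2k+1$, the requirement $\pi \vee \hat{0}_n = 1_{2n}$ forces $\pi = \{(1),(2,3),(4,5),\dots,(2n-2,2n-1),(2n)\}$, which one checks indeed satisfies it. It then remains to evaluate
$\beta_\pi(y_1,\dots,y_{2n}) = \beta_1(y_1)\,\beta_1(y_{2n})\prod_{k=1}^{n-1}\beta_2(y_{2k},y_{2k+1})$:
slots $1$ and $2n$ hold the outer letters of $c^{(\epsilon_1)}$ and $c^{(\epsilon_n)}$, while the pair $\{2k, 2k+1\}$ contributes $\beta_2(x_2)$ when $\epsilon_k = 1$ and $\beta_2(x_1)$ when $\epsilon_k = -1$. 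Collecting the contributions in the four cases --- $n$ even or odd, first sign $+1$ or $-1$ --- then yields the stated formulas.

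I expect the only delicate points to be making the ``forced $\pi$'' deduction fully rigorous (that admissibility together with $\pi \vee \hat{0}_n = 1_{2n}$ has a unique solution) and carrying out the bookkeeping in the last step correctly --- in particular counting, for each parity of $n$ and each choice of starting sign, how many of the $n-1$ middle pairs $\{2k,2k+1\}$ produce $\beta_2(x_1)$ as opposed to $\beta_2(x_2)$, so that the exponent $(n-2)/2$ (even case) or $(n-1)/2$ (odd case) comes out right.
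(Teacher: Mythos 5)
Your proposal is correct and follows essentially the same route as the paper's proof: the Boolean products-as-arguments formula over interval partitions $\pi$ with $\pi\vee\{(1,2),\dots,(2n-1,2n)\}=1_{2n}$, vanishing of mixed Boolean cumulants to kill non-alternating sign patterns, identification of the unique contributing partition $\{(1),(2,3),\dots,(2n-2,2n-1),(2n)\}$, and a block count giving the stated formulas. If anything, your justification that admissible blocks have size at most $2$ and that $\pi$ is forced is spelled out a bit more explicitly than in the paper, and the remaining bookkeeping you defer is exactly the routine count the paper performs.
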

\begin{proof}Let $\rho$ be the partition $\{(1, 2), (3, 4), \dots, (2 n - 1, 2n )\}$:

\begin{center}
\begin{tikzpicture}[anchor=base, baseline]
\node[above] at (-0.75,0.50) {$\rho = \mbox{}$};
\node[above] at (0.00, 0.75) {$1$};
\node[above] at (0.50, 0.75) {$2$};
\node[above] at (1.00, 0.75) {$3$};
\node[above] at (1.50, 0.75) {$4$};
\node[above] at (2.00, 0.75) {$5$};
\node[above] at (2.50, 0.75) {$6$};
\node[above] at (3.00, 0.75) {$7$};
\node[above] at (3.50, 0.75) {$8$};
\node[above] at (4.25, 0.75) {$\cdots$};
\node[above] at (5.40, 0.75) {$2n-1$};
\node[above] at (6.50, 0.75) {$2n$};
\draw  (0.00 , 0.75) -- (0.00, 0.50) -- (0.50, 0.50) -- (0.50, 0.75);
\draw  (1.00 , 0.75) -- (1.00, 0.50) -- (1.50, 0.50) -- (1.50, 0.75);
\draw  (2.00 , 0.75) -- (2.00, 0.50) -- (2.50, 0.50) -- (2.50, 0.75);
\draw  (3.00 , 0.75) -- (3.00, 0.50) -- (3.50, 0.50) -- (3.50, 0.75);
\draw  (5.50 , 0.75) -- (5.50, 0.50) -- (6.50, 0.50) -- (6.50, 0.75);
\end{tikzpicture}
\end{center}
Next, let $x_{-1} = x_2$. Then $c^{(\epsilon)} = x_{\epsilon} x_{-\epsilon}$. So if $\epsilon = 1$ we have $c^{(\epsilon)} = x_1 x_2$ and if $\epsilon = -1$ we have $c^{(\epsilon)} = x_2 x_1$. Then with this notation, Theorem \ref{thm:boolean cumulants products} becomes
\begin{equation}\label{eq:boolean cumulant products}
\beta_n(c^{(\epsilon_1)},\dots,c^{(\epsilon_n)}) =
\mathop{\sum_{\pi \in I(2n)}}_{\pi \vee \rho = 1_{2n}}
\beta_\pi (x_{\epsilon_1}, x_{-\epsilon_1}, x_{\epsilon_2}, x_{-\epsilon_2}, \dots, x_{\epsilon_n}, x_{-\epsilon_n}).
\end{equation}
In order to have 
\[
\beta_\pi (x_{\epsilon_1}, x_{-\epsilon_1}, x_{\epsilon_2}, x_{-\epsilon_2}, \dots, x_{\epsilon_n}, x_{-\epsilon_n}) \neq  0
\]
we must have, by the vanishing of mixed cumulants, that $|V \cap W| \leq 1$, for each block $V$ of $\pi$ and each block $W$ of $\rho$. On the other hand, in order have $\pi \vee \rho = 1_{2n}$, we must have that $2k + 1 \sim_\pi 2k + 2$, for each $k$, since $\pi$ is an interval partition. These two conditions together imply that for each $k$, $x_{-\epsilon_k}$ and $x_{\epsilon_{k+1}}$ are the same operator, which in turn means that $-\epsilon_k = \epsilon_{k+1}$, as claimed. 

Now suppose that $\epsilon_i+\epsilon_{i+1}=0$ for $1\leq i <n$, let us compute $\beta_n(c^{(\epsilon_1)},\dots,c^{(\epsilon_n)})$. From the discussion above, 
equation (\ref{eq:boolean cumulant products}) means that there is only one $\pi$ which contributes to (\ref{eq:boolean cumulant products}); $\pi = \{(1),(2, 3), \dots, (2n-2, 2n -1), (2n)\}$:
\begin{center}
\begin{tikzpicture}[anchor=base, baseline]
\node[above] at (-0.75,0.50) {$\pi = \mbox{}$};
\node[above] at (0.00, 0.75) {$1$};
\node[above] at (0.50, 0.75) {$2$};
\node[above] at (1.00, 0.75) {$3$};
\node[above] at (1.50, 0.75) {$4$};
\node[above] at (2.00, 0.75) {$5$};
\node[above] at (2.50, 0.75) {$6$};
\node[above] at (3.00, 0.75) {$7$};
\node[above] at (4.00, 0.75) {$\cdots$};
\node[above] at (5.40, 0.75) {$2n-2$};
\node[above] at (7.00, 0.75) {$2n-1$};
\node[above] at (8.00, 0.75) {$2n$};
\draw  (0.00, 0.75) -- (0.00, 0.50); 
\draw  (0.50, 0.75) -- (0.50, 0.50) -- (1.00, 0.50) -- (1.00, 0.75);
\draw  (1.50, 0.75) -- (1.50, 0.50) -- (2.00, 0.50) -- (2.00, 0.75);
\draw  (2.50, 0.75) -- (2.50, 0.50) -- (3.00, 0.50) -- (3.00, 0.75);
\draw  (5.50, 0.75) -- (5.50, 0.50) -- (7.1, 0.50) -- (7.10, 0.75);
\draw  (8.00, 0.75) -- (8.00, 0.50);
\end{tikzpicture}
\end{center} 
When $n$ is odd, we must have $n -1$ blocks of size 2, half with $x_1$ and half with $x_2$. This contributes $(\beta_2(x_1) \beta_2(x_2))^{(n -1 )/2}$. Of the two singletons, one is a $x_1$ and the other must be a $x_2$. Thus the total contribution is $\beta_1(x_1) \beta_1(x_2)(\beta_2(x_1) \beta_2(x_2))^{(n -1 )/2}$. 

When $n$ is even, we also have $n-1$ blocks of size $2$. If $\epsilon_1 = 1$, then we have  $n/2$ blocks with $x_2$  and the remaining $(n-2)/2$ are with $x_1$. The two singletons are both with $x_1$. Thus the total contribution is $\beta_1(x_1)^2 \beta_2(x_1)^{(n-2)/2}\beta_2(x_2)^{n/2} $.

When $n$ is even and $\epsilon_1 = -1$, it is the other way around. There are $n/2$ blocks of size $2$ with $x_1$ and $(n-2)/2$ blocks of size $2$ with $x_1$, contributing $\beta_2(x_1)^{n/2} \beta_2(x_2)^{(n-2)/2}$. Finally, there are two blocks of size $1$, one both with $x_2$. This proves the claim.
\end{proof}

\begin{rem}
If $(\cA,\varphi)$ is a $*$-probability space and if we further assume $x_1$ and $x_2$ are self-adjoint, then by Lemma  \ref{Lem: Boolean poly cumulants}, we have $c$ is $\eta$-diagonal (for more details on $\eta$-diagonal, see \cite{BNNS, MH}).
\end{rem}
\begin{thm}\label{thm: Boolean poly cumulants}
Suppose $x_1$ and $x_2$ are Boolean independent, and $g=ax_1x_2+bx_2x_1$ for $a,b\in\mathbb{C}$, then 
$$
\beta_n(g)= \begin{cases}   ab(\beta_2(x_1)\beta_1(x_2)^2+\beta_2(x_2)\beta_1(x_1)^2)\Big(ab\beta_2(x_1)\beta_2(x_2)\Big)^{(n-2)/2} & \text{ if }n  \text{ is even} \\    (a+b)\beta_1(x_1)\beta_1(x_2)\Big(ab\beta_2(x_1)\beta_2(x_2)\Big)^{(n-1)/2} & \text{ if }n \text{ is odd}.    
\end{cases}
$$
\end{thm}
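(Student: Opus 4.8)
The plan is to deduce the theorem from Lemma~\ref{Lem: Boolean poly cumulants} using only the multilinearity of Boolean cumulants. Write $c^{(1)} = x_1 x_2$ and $c^{(-1)} = x_2 x_1$, so that $g = a\,c^{(1)} + b\,c^{(-1)}$. Expanding $\beta_n(g, \dots, g)$ by multilinearity gives
\[
\beta_n(g) = \sum_{\epsilon_1, \dots, \epsilon_n \in \{-1,1\}} a^{p(\epsilon)}\, b^{n - p(\epsilon)}\, \beta_n\bigl(c^{(\epsilon_1)}, \dots, c^{(\epsilon_n)}\bigr),
\]
where $p(\epsilon) = \#\{\,l : \epsilon_l = 1\,\}$. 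By the first assertion of Lemma~\ref{Lem: Boolean poly cumulants}, every summand vanishes unless $\epsilon_l + \epsilon_{l+1} = 0$ for $1 \le l < n$; hence only the two alternating strings $(1, -1, 1, -1, \dots)$ and $(-1, 1, -1, 1, \dots)$ survive, and it remains to substitute the explicit values of $\beta_n(c^{(\epsilon_1)}, \dots, c^{(\epsilon_n)})$ recorded in the Lemma and collect the powers of $a$ and $b$.

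First I would handle $n$ even. Both alternating strings then have exactly $n/2$ entries equal to $1$, so each comes with the factor $a^{n/2} b^{n/2} = ab\,(ab)^{(n-2)/2}$. Lemma~\ref{Lem: Boolean poly cumulants} assigns to the string starting with $1$ the value $\beta_2(x_2)\beta_1(x_1)^2\bigl(\beta_2(x_1)\beta_2(x_2)\bigr)^{(n-2)/2}$ and to the string starting with $-1$ the value $\beta_2(x_1)\beta_1(x_2)^2\bigl(\beta_2(x_1)\beta_2(x_2)\bigr)^{(n-2)/2}$; adding the two contributions and pulling out $\bigl(ab\,\beta_2(x_1)\beta_2(x_2)\bigr)^{(n-2)/2}$ gives precisely the even case of the stated formula.

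Next I would handle $n$ odd. Now the string starting with $1$ has $(n+1)/2$ ones and $(n-1)/2$ minus ones, contributing the factor $a^{(n+1)/2} b^{(n-1)/2}$, while the string starting with $-1$ contributes $a^{(n-1)/2} b^{(n+1)/2}$; their sum is $(ab)^{(n-1)/2}(a+b)$. Lemma~\ref{Lem: Boolean poly cumulants} gives both strings the common value $\beta_1(x_1)\beta_1(x_2)\bigl(\beta_2(x_1)\beta_2(x_2)\bigr)^{(n-1)/2}$, so factoring out $\bigl(ab\,\beta_2(x_1)\beta_2(x_2)\bigr)^{(n-1)/2}$ yields the odd case. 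The argument has no real obstacle once Lemma~\ref{Lem: Boolean poly cumulants} is available: the only points requiring care are matching each of the two alternating strings with the correct branch of the Lemma and counting the exponents of $a$ and $b$ correctly in each parity. If one also wants the moments of $g$, one can assemble $\eta_g(z) = \sum_{n \ge 1} \beta_n(g) z^n$ --- which splits into a geometric-type series in even degrees and one in odd degrees --- and then recover $\psi_g$ from equation (\ref{eq:eta_psi_relation}).
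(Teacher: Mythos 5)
Your proposal is correct and follows essentially the same route as the paper: expand $\beta_n(g)$ by multilinearity, invoke Lemma \ref{Lem: Boolean poly cumulants} to discard all non-alternating strings, and sum the two surviving alternating contributions with the coefficients $(ab)^{n/2}$ (even $n$) and $a^{(n\pm1)/2}b^{(n\mp1)/2}$ (odd $n$). The paper writes out only the even case and calls the odd case similar, so your explicit treatment of both parities is, if anything, slightly more complete.
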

\begin{proof}
Suppose $n$ is even, we note that 
\begin{eqnarray*}
\beta_n(g)&=&\beta_n(ac^{(1)}+bc^{(-1)}) \\
&=&\beta_n(ac^{(1)},bc^{(-1)},\cdots,ac^{(1)},bc^{(-1)}) + \beta_n(bc^{(-1)},ac^{(1)},\dots,bc^{(-1)},ac^{(1)})+\text{ other terms} \\
&=& (ab)^{\frac{n}{2}}[\beta_n(c^{(1)},c^{(-1)},\dots,c^{(1)},c^{(-1)})+\beta_n(c^{(-1)},c^{(1)},\dots,c^{(-1)},c^{(1)})]+\text{other terms}.
\end{eqnarray*}
By Lemma \ref{Lem: Boolean poly cumulants}, we further obtain
\begin{eqnarray*}
\beta_n(g)&=&(ab)^{\frac{n}{2}}[\beta_2(x_2)\beta_1(x_1)^2\Big(\beta_2(x_1)\beta_2(x_2)\Big)^{\frac{n-2}{2}}+\beta_2(x_1)\beta_1(x_2)^2\Big(\beta_2(x_1)\beta_2(x_2)\Big)^{\frac{n-2}{2}}] + 0 \\
&=& ab(\beta_2(x_1)\beta_1(x_2)^2+\beta_2(x_2)\beta_1(x_1)^2)\Big(ab\beta_2(x_1)\beta_2(x_2)\Big)^{\frac{n-2}{2}}.
\end{eqnarray*}
The proof for the case of odd $n$ is similar.    
\end{proof}
Note that for the anti-commutator $p=x_1x_2+x_2x_1$ and the commutator $q=i(x_1x_2-x_2x_1)$, we can easily to see that
$$
\beta_1(p)= 2\beta_1(x_1)\beta_1(x_2), \text{ and }\beta_2(p)= \beta_2(q)=\beta_1(x_1)^2\beta_2(x_2)+\beta_1(x_2)^2\beta_2(x_1). 
$$
Then following Theorem \ref{thm: Boolean poly cumulants}, we have the distributions of $p$ and $q$ immediately by letting $(a,b)=(1,1)$ and $(a,b)=(i,-i)$ respectively. We state it as below.  
\begin{cor}
With $p$ and $q$ as above
$$
\beta_n(p) =
\begin{cases}
    \beta_2(p) (\beta_2(x_1)\beta_2(x_2))^{(n-2)/2} & \text{if } n \text{ is even}\\
     \beta_1(p) (\beta_2(x_1)\beta_2(x_2))^{(n-1)/2}  & \text{if } n \text{ is odd}
\end{cases}
$$
and 
\[\beta_n(q) =
\begin{cases}
    \beta_2(q) (\beta_2(x_1)\beta_2(x_2))^{(n-2)/2} & \text{if } n \text{ is even}\\
    0                                               & \text{if } n \text{ is odd}
\end{cases}.
\]
\end{cor}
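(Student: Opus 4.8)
The plan is to read this off Theorem~\ref{thm: Boolean poly cumulants} by specializing the scalars, so the only work is the bookkeeping of the two substitutions together with the elementary identities for $\beta_1$ and $\beta_2$ of $p$ and $q$ recorded just before the corollary.

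For the anticommutator $p = x_1x_2 + x_2x_1 = c^{(1)} + c^{(-1)}$ I would put $(a,b) = (1,1)$ in Theorem~\ref{thm: Boolean poly cumulants}, so $ab = 1$ and $a+b = 2$; this yields $\beta_n(p) = \bigl(\beta_2(x_1)\beta_1(x_2)^2 + \beta_2(x_2)\beta_1(x_1)^2\bigr)\bigl(\beta_2(x_1)\beta_2(x_2)\bigr)^{(n-2)/2}$ for $n$ even and $\beta_n(p) = 2\beta_1(x_1)\beta_1(x_2)\bigl(\beta_2(x_1)\beta_2(x_2)\bigr)^{(n-1)/2}$ for $n$ odd. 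It then remains to note $\beta_1(p) = m_1(p) = 2\beta_1(x_1)\beta_1(x_2)$ and $\beta_2(p) = m_2(p) - m_1(p)^2 = \beta_1(x_1)^2\beta_2(x_2) + \beta_1(x_2)^2\beta_2(x_1)$; both follow from the moment--Boolean cumulant relation (\ref{eq:boolean cumulant}) and Boolean independence (\ref{eq:boolean_independence}) by expanding $m_2(p)$ over the four words $x_1x_2x_1x_2$, $x_1x_2^2x_1$, $x_2x_1^2x_2$, $x_2x_1x_2x_1$ and evaluating each. Substituting these in gives the claimed formula for $\beta_n(p)$.

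For the commutator $q = i(x_1x_2 - x_2x_1) = i\,c^{(1)} - i\,c^{(-1)}$ I would apply Theorem~\ref{thm: Boolean poly cumulants} with $(a,b) = (i,-i)$, so $ab = 1$ and $a+b = 0$. The factor $a+b$ makes the odd-$n$ formula vanish, giving $\beta_n(q) = 0$ for $n$ odd (in particular $\beta_1(q) = m_1(q) = 0$, consistent with $\phi(x_1x_2) = \phi(x_2x_1)$ under Boolean independence). For $n$ even the prefactor is again $\beta_2(x_1)\beta_1(x_2)^2 + \beta_2(x_2)\beta_1(x_1)^2 = \beta_2(q)$, where the identity $\beta_2(q) = \beta_2(p)$ follows from the same short computation as above. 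I do not anticipate any real obstacle: all the combinatorial content already lives in Lemma~\ref{Lem: Boolean poly cumulants} and Theorem~\ref{thm: Boolean poly cumulants}, and the corollary is purely a substitution plus the low-order evaluations; the only thing to watch is which of the two monomials $\beta_2(x_1)\beta_1(x_2)^2$, $\beta_2(x_2)\beta_1(x_1)^2$ is attached to $\epsilon_1 = 1$ versus $\epsilon_1 = -1$ when invoking Lemma~\ref{Lem: Boolean poly cumulants}, which is already sorted out inside the proof of Theorem~\ref{thm: Boolean poly cumulants}.
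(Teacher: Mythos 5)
Your proposal is correct and follows exactly the paper's route: the corollary is obtained by substituting $(a,b)=(1,1)$ and $(a,b)=(i,-i)$ into Theorem~\ref{thm: Boolean poly cumulants}, combined with the identities $\beta_1(p)=2\beta_1(x_1)\beta_1(x_2)$ and $\beta_2(p)=\beta_2(q)=\beta_1(x_1)^2\beta_2(x_2)+\beta_1(x_2)^2\beta_2(x_1)$, which the paper states without proof and you verify correctly by a direct moment computation under Boolean independence.
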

We will provide the formula for the moments of $g=ax_1x_2+bx_2x_1$ in terms of cumulants via Theorem \ref{thm: Boolean poly cumulants}. Observe that following the moment-cumulant formula, $m_n(g)=\sum_{\pi \in I(n)}\beta_{\pi}(g)$ where 
$$
\beta_{2k}(g)=d^{k-1}\alpha_2, \text{ and }\beta_{2k+1}(g)=d^{k}\alpha_1
$$
with $d=ab\beta_2(x_1)\beta_2(x_2)$, $\alpha_1=\frac{1}{2}(a+b)\beta_1(p)$ and $\alpha_2=ab\beta_2(p)$. To simplify our notation, now we will write $m_n$ for $m_n(g)$ and $\beta_n$ for $\beta_n(g)$. 

\begin{nota}
Let $\pi\in I(n)$, where the first block of $\pi$ is denoted as $V_1$. Then we define $I_e(n)$ (and $I_o(n)$ respectively) as the set of interval partitions such that $|V_1|$ is even (odd respectively). We further define:
$$\gamma_{o,n}=\sum_{\pi\in I_o(n)}\beta_{\pi}\quad\text{ and }\quad \gamma_{e,n}=\sum_{\pi\in I_e(n)}\beta_{\pi}.
$$ 
\end{nota}

We first show that $m_k,\gamma_{e,k},$ and $\gamma_{o,k}$ satisfy the following relation.
\begin{lem}\label{lem: m-gamma-gamma-relation}
For each $k$, we have
\begin{eqnarray*}
m_k=\gamma_{o,k}+\gamma_{e,k}, \quad \gamma_{e,k+1}=\frac{\alpha_2}{\alpha_1}\gamma_{o,k} \quad\text{ and }\quad\gamma_{o,k+1}=\alpha_1 m_k+d\frac{\alpha_1}{\alpha_2}\gamma_{e,k}.    
\end{eqnarray*}
\end{lem}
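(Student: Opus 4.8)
The plan is to use the recursive structure of interval partitions together with the explicit two–periodic form of the Boolean cumulants of $g$ established in Theorem \ref{thm: Boolean poly cumulants}. Every nonempty $\pi \in I(n)$ is uniquely specified by its first block $V_1 = \{1,\dots,j\}$ and an interval partition of $\{j+1,\dots,n\}$, so peeling off $V_1$ in the moment–cumulant formula $m_n = \sum_{\pi\in I(n)}\beta_\pi(g)$ gives the familiar recursion $m_n = \sum_{j=1}^n \beta_j(g)\,m_{n-j}$ with $m_0 = 1$. Sorting the terms of this sum according to the parity of $j = |V_1|$ and comparing with the definitions of $\gamma_{o,n}$ and $\gamma_{e,n}$ yields at once the first identity $m_n = \gamma_{o,n} + \gamma_{e,n}$.

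For the remaining two identities I would substitute $\beta_{2s}(g) = d^{\,s-1}\alpha_2$ and $\beta_{2s+1}(g) = d^{\,s}\alpha_1$, so that
\[
\gamma_{e,n} = \sum_{s\ge 1} d^{\,s-1}\alpha_2\, m_{n-2s}
\qquad\text{and}\qquad
\gamma_{o,n} = \sum_{s\ge 0} d^{\,s}\alpha_1\, m_{n-2s-1}.
\]
A shift of the summation index in the first sum rewrites it as $\sum_{s\ge0}d^{\,s}\alpha_2\,m_{n-2-2s}$, which is term–for–term $(\alpha_2/\alpha_1)$ times $\gamma_{o,n-1}$; taking $n = k+1$ gives $\gamma_{e,k+1} = (\alpha_2/\alpha_1)\gamma_{o,k}$. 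For the last identity I would split $\gamma_{o,k+1} = \sum_{s\ge0}d^{\,s}\alpha_1\,m_{k-2s}$ into its $s=0$ term $\alpha_1 m_k$ and the tail $\sum_{s\ge1}$; shifting the index in the tail produces $d\,(\alpha_1/\alpha_2)\sum_{s\ge0}d^{\,s}\alpha_2\,m_{k-2-2s} = d\,(\alpha_1/\alpha_2)\,\gamma_{e,k}$, which is exactly the claimed relation $\gamma_{o,k+1} = \alpha_1 m_k + d(\alpha_1/\alpha_2)\gamma_{e,k}$.

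There is no serious obstacle; the entire content is the recursion for interval–partition sums combined with the periodicity of $\beta_n(g)$. The only points demanding care are bookkeeping ones: keeping the convention $m_0 = 1$ consistent, checking that the shifted sums are genuinely empty in the base cases so that the small–$k$ instances come out correctly, and noting that the quotients $\alpha_2/\alpha_1$ and $\alpha_1/\alpha_2$ are read off the manifest term–by–term equalities (which hold regardless), the division requiring the standing assumption that $\alpha_1$ and $\alpha_2$ are nonzero.
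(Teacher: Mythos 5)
Your argument is correct: the convolution identities $\gamma_{e,n}=\sum_{s\ge1}\beta_{2s}(g)\,m_{n-2s}$ and $\gamma_{o,n}=\sum_{s\ge0}\beta_{2s+1}(g)\,m_{n-2s-1}$ do follow from peeling off the first block of an interval partition (with $m_0=1$), and once the two--periodic values $\beta_{2s}(g)=d^{s-1}\alpha_2$, $\beta_{2s+1}(g)=d^{s}\alpha_1$ from Theorem \ref{thm: Boolean poly cumulants} are inserted, your index shifts give all three relations, with the empty sums handling the small cases. The mechanics differ mildly from the paper's: there, the first identity is read off directly from the definition of $I_o(k)$ and $I_e(k)$, while the second and third are proved by a bijective manipulation of the first block --- every $\pi\in I_e(k+1)$ arises from a unique $\sigma\in I_o(k)$ by enlarging its first block by one element, and $\beta_{|V_1|+1}=\frac{\alpha_2}{\alpha_1}\beta_{|V_1|}$ gives $\beta_\pi=\frac{\alpha_2}{\alpha_1}\beta_\sigma$ termwise; similarly $I_o(k+1)$ splits into partitions with singleton first block (contributing $\alpha_1 m_k$) and those obtained from $I_e(k)$ by enlarging the first block (contributing $d\frac{\alpha_1}{\alpha_2}\gamma_{e,k}$). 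So the paper never writes the convolution with moments, instead comparing partition sums block by block, whereas you first collapse everything onto the moment sequence and then manipulate explicit series; both rest on exactly the same two facts (the interval--partition structure and the periodicity $\beta_{n+2}=d\,\beta_n$), your version being slightly more algebraic and the paper's slightly more combinatorial. Your closing caveat is also apt: the displayed quotients require $\alpha_1,\alpha_2\neq0$ (or a formal reading), a point the paper's statement shares.
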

\begin{proof}
It is clear that
\begin{equation}\label{eqn: m_k - gamma - formula}
m_k=\sum_{\pi\in I(k)}\beta_{\pi}= \sum_{\pi\in I_o(k)}\beta_{\pi} + \sum_{\pi\in I_e(k)}\beta_{\pi} =\gamma_{o,k}+\gamma_{e,k}.
\end{equation}
In addition, we note that each $\pi\in I_e(k+1)$ is generated from a $\sigma\in I_o(k)$ by adding one element into the first block of $\sigma$. To be precise, if we write  
$$
\sigma=\{(1,\dots,i_1),(i_1+1,\dots,i_2),\dots,(i_{k-1}+1,\dots,i_k)\} \text{ with }i_1=2s+1\text{ for some $s$, and }i_k=k.
$$
Then $\pi=\{(1,\dots,i_1+1),(i_1+2,\dots,i_2+1),\dots,(i_{k-1}+2,\dots,i_k+1)\}. 
$ Moreover, we observe $$\beta_{i_1+1}=\beta_{2s+2}=d^s\alpha_2=\frac{\alpha_2}{\alpha_1}d^s\alpha_1\beta_{2s+1}=\frac{\alpha_2}{\alpha_1}\beta_{i_1}.$$ 
Hence, we have
$$
\beta_{\pi}= \beta_{i_1+1}\beta_{i_2-i_1}\cdots \beta_{i_k-i_{k-1}} = \frac{\alpha_2}{\alpha_1}\beta_{i_1}\beta_{i_2-i_1}\cdots \beta_{i_k-i_{k-1}}=\frac{\alpha_2}{\alpha_1}\beta_{\sigma}.
$$
Thus, we conclude $\gamma_{e,k+1}=\frac{\alpha_2}{\alpha_1}\gamma_{o,k}$. 

On the other hand, note that each $\pi\in I_{o}(k+1)$ is generated either from a $\sigma_1\in I_{e}(k)$ by adding one element into the first block of $\sigma_1$ or adding singleton as the first block for an interval partition $\sigma_2\in I(k)$. Following the analogous argument, one gets
$$
\gamma_{o,k+1}=\alpha_1 m_k+d\frac{\alpha_1}{\alpha_2}\gamma_{e,k}.
$$
\end{proof}

\begin{thm}\label{thm: moment of g}
With $g=ax_1x_2+bx_2x_1$, we have 
\begin{eqnarray*}
m_k(g)=\frac{1}{\omega}\big(\frac{\alpha_1+\omega}{2}\big)^{k-1}\big(\alpha_1\frac{\alpha_1+\omega}{2}+\alpha_2\big)-\frac{1}{\omega}\big(\frac{\alpha_1-\omega}{2}\big)^{k-1}\big(\alpha_1\frac{\alpha_1-\omega}{2}+\alpha_2\big)
\end{eqnarray*} 
where $\omega=\sqrt{\alpha_1^2+4(d+\alpha_2)}$ with $d=ab\beta_2(x_1)\beta_2(x_2)$, $\alpha_1=\frac{1}{2}(a+b)\beta_1(p)$ and $\alpha_2=ab\beta_2(p)$. In other words, $$
g\sim \frac{\alpha_1\theta_1+\alpha_2}{\omega\theta_1}\delta_{\theta_1}-\frac{\alpha_1\theta_2+\alpha_2}{\omega\theta_2}\delta_{\theta_2}
\text{ where }\theta_1=\frac{\alpha_1+\omega}{2}, \text{ and }\theta_2=\frac{\alpha_1-\omega}{2}.
$$
\end{thm}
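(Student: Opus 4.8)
The plan is to turn the three recursions of Lemma~\ref{lem: m-gamma-gamma-relation} into generating functions and solve the resulting linear system in closed form. Put $M(z)=\sum_{k\geq 1}m_k z^k$, $\Gamma_o(z)=\sum_{k\geq 1}\gamma_{o,k}z^k$ and $\Gamma_e(z)=\sum_{k\geq 1}\gamma_{e,k}z^k$. First I would record the initial data: $I(1)$ consists only of the partition $\{(1)\}$, whose first block has size $1$, so $\gamma_{o,1}=\beta_1(g)=\alpha_1$, $\gamma_{e,1}=0$, and $m_1=\alpha_1$. Translating $m_k=\gamma_{o,k}+\gamma_{e,k}$, $\gamma_{e,k+1}=\frac{\alpha_2}{\alpha_1}\gamma_{o,k}$, $\gamma_{o,k+1}=\alpha_1 m_k+d\frac{\alpha_1}{\alpha_2}\gamma_{e,k}$ (for $k\geq 1$) into identities of formal power series gives $M=\Gamma_o+\Gamma_e$, $\Gamma_e=\frac{\alpha_2}{\alpha_1}z\,\Gamma_o$, and $\Gamma_o-\alpha_1 z=z\bigl(\alpha_1 M+d\frac{\alpha_1}{\alpha_2}\Gamma_e\bigr)$.

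Eliminating $\Gamma_o$ and $\Gamma_e$ from these three equations is routine and yields
\[
M(z)=\psi_g(z)=\frac{\alpha_1 z+\alpha_2 z^2}{1-\alpha_1 z-(\alpha_2+d)z^2}.
\]
(One can also bypass Lemma~\ref{lem: m-gamma-gamma-relation} entirely: Theorem~\ref{thm: Boolean poly cumulants} gives $\eta_g(z)=\sum_n\beta_n(g)z^n=\frac{\alpha_1 z+\alpha_2 z^2}{1-dz^2}$ directly, and then the relation $\eta_g(1+\psi_g)=\psi_g$ from \eqref{eq:eta_psi_relation}, i.e.\ $\psi_g=\eta_g/(1-\eta_g)$, produces the same rational function.) Now I would observe that $\theta_1=\frac{\alpha_1+\omega}{2}$ and $\theta_2=\frac{\alpha_1-\omega}{2}$ are exactly the roots of $\lambda^2-\alpha_1\lambda-(\alpha_2+d)=0$, so $\theta_1+\theta_2=\alpha_1$, $\theta_1\theta_2=-(\alpha_2+d)$, $\theta_1-\theta_2=\omega$, and hence the denominator factors as $(1-\theta_1 z)(1-\theta_2 z)$.

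Finally I would expand $M(z)=\dfrac{\alpha_1 z+\alpha_2 z^2}{(1-\theta_1 z)(1-\theta_2 z)}$ in partial fractions as $\dfrac{w_1}{1-\theta_1 z}+\dfrac{w_2}{1-\theta_2 z}+c$; the residue at $z=\theta_1^{-1}$ gives $w_1=\dfrac{\alpha_1\theta_1+\alpha_2}{\omega\theta_1}$, the residue at $z=\theta_2^{-1}$ gives $w_2=-\dfrac{\alpha_1\theta_2+\alpha_2}{\omega\theta_2}$, and $c=-(w_1+w_2)$ is forced by $M(0)=0$. Expanding the geometric series, the coefficient of $z^k$ for $k\geq 1$ is $m_k(g)=w_1\theta_1^k+w_2\theta_2^k$, which upon writing $w_1\theta_1^k=\frac1\omega(\alpha_1\theta_1+\alpha_2)\theta_1^{k-1}$ and similarly for $w_2$ is precisely the asserted formula. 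Since $\int t^k\,d(w_1\delta_{\theta_1}+w_2\delta_{\theta_2})=w_1\theta_1^k+w_2\theta_2^k$ for all $k\geq 1$, this is exactly the statement $g\sim\frac{\alpha_1\theta_1+\alpha_2}{\omega\theta_1}\delta_{\theta_1}-\frac{\alpha_1\theta_2+\alpha_2}{\omega\theta_2}\delta_{\theta_2}$ at the level of moments of order $\geq 1$.

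There is no substantive obstacle here: this is a linear-recursion / rational-generating-function computation, and the only points needing care are the bookkeeping of the initial conditions and the degenerate parameter values. When $\alpha_1=0$ or $\alpha_2=0$ the recursions of Lemma~\ref{lem: m-gamma-gamma-relation} are not literally applicable, but the $\eta$-transform derivation of $\psi_g$ goes through verbatim, so I would use that route to cover all cases uniformly; the genuinely exceptional case is the double root $\omega=0$, where $M(z)=\frac{\alpha_1 z+\alpha_2 z^2}{(1-\frac{\alpha_1}{2}z)^2}$ and the closed form must be read as its $\omega\to 0$ limit (which replaces the two atoms by an atom together with a point-derivative term).
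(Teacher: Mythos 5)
Your proposal is correct, and it reaches the paper's closed form by a somewhat different route. The paper stays at the level of sequences: from Lemma~\ref{lem: m-gamma-gamma-relation} it derives the single second-order recurrence $\gamma_{o,k+2}-\alpha_1\gamma_{o,k+1}-(d+\alpha_2)\gamma_{o,k}=0$, solves it via the characteristic polynomial $x^2-\alpha_1x-(d+\alpha_2)$ with the initial data $\gamma_{o,1}=\alpha_1$, $\gamma_{o,2}=\alpha_1^2$, and then recovers $\gamma_{e,k}$ and $m_k=\gamma_{o,k}+\gamma_{e,k}$. You instead package the same recursions into generating functions, obtain $\psi_g(z)=\frac{\alpha_1z+\alpha_2z^2}{1-\alpha_1z-(d+\alpha_2)z^2}$, and extract $m_k$ by partial fractions; the quadratic whose roots are $\theta_1,\theta_2$ is of course the same characteristic polynomial, so the two arguments are close cousins resting on the same combinatorial input (Theorem~\ref{thm: Boolean poly cumulants} via Lemma~\ref{lem: m-gamma-gamma-relation}). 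What your version buys is twofold: the alternative derivation of $\psi_g$ directly from $\eta_g(z)=\frac{\alpha_1z+\alpha_2z^2}{1-dz^2}$ and the relation \eqref{eq:eta_psi_relation} bypasses Lemma~\ref{lem: m-gamma-gamma-relation} altogether, and it sidesteps the divisions by $\alpha_1$ and $\alpha_2$ built into that lemma, so the degenerate parameter values $\alpha_1=0$ or $\alpha_2=0$ (which the paper's recursion-based proof silently assumes away) are covered uniformly; you also correctly flag the double-root case $\omega=0$, where the stated formula must be read as a limit, and you correctly read the final "$g\sim$" statement as an identity of moments of order $\geq 1$ (the weights $w_1+w_2$ need not sum to $1$). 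Your residue computations $w_1=\frac{\alpha_1\theta_1+\alpha_2}{\omega\theta_1}$, $w_2=-\frac{\alpha_1\theta_2+\alpha_2}{\omega\theta_2}$ and the resulting $m_k=w_1\theta_1^k+w_2\theta_2^k$ check out against the paper's formula.
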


\begin{proof}
Firstly, it is clear that $\gamma_{o,1}=\alpha_1$ and $\gamma_{o,2}=\alpha_1^2$.
Then by Lemma \ref{lem: m-gamma-gamma-relation}, we can deduce that 
$$
\frac{\gamma_{o,k+2}}{\alpha_1}-\frac{d}{\alpha_1}\gamma_{o,k}=\gamma_{o,k+1}+\frac{\alpha_2}{\alpha_1}\gamma_{o,k}, 
$$
which implies 
$$
\gamma_{o,k+2}-\alpha_1 \gamma_{o,k+1}-(d+\alpha_2)\gamma_{o,k}=0.
$$
By solving the characteristic polynomial $x^2-\alpha_1x-(d+\alpha_2)=0$, we obtain the general solution
\begin{equation*}
    \gamma_{o,k}=C_1 \big(\frac{\alpha_1+\omega}{2}\big)^k+C_2\big(\frac{\alpha_1-\omega}{2}\big)^k 
\end{equation*}
for some constants $C_1$ and $C_2$, where $\omega=\sqrt{\alpha_1^2+4(d+\alpha_2)}.$ Then by plugging in the initial conditions $\gamma_{o,1}=\alpha_1$ and $\gamma_{o,2}=\alpha_1^2$, one gets 
$$
\gamma_{o,k}=\frac{\alpha_1}{\omega} \big(\frac{\alpha_1+\omega}{2}\big)^k-\frac{\alpha_1}{\omega}\big(\frac{\alpha_1-\omega}{2}\big)^k. 
$$
Therefore, 
$$
\gamma_{e,k}=\frac{\alpha_2}{\alpha_1}\gamma_{o,k-1}=\frac{\alpha_2}{\omega} \big(\frac{\alpha_1+\omega}{2}\big)^{k-1}-\frac{\alpha_2}{\omega}\big(\frac{\alpha_1-\omega}{2}\big)^{k-1};
$$
hence, we finally get 
$$
m_k=\gamma_{o,k}+\gamma_{e,k}=\frac{1}{\omega}\big(\frac{\alpha_1+\omega}{2}\big)^{k-1}\big(\alpha_1\frac{\alpha_1+\omega}{2}+\alpha_2\big)-\frac{1}{\omega}\big(\frac{\alpha_1-\omega}{2}\big)^{k-1}\big(\alpha_1\frac{\alpha_1-\omega}{2}+\alpha_2\big). 
$$
\end{proof}

\subsection{The Infinitesimal cases}
In this subsection, we will give the infinitesimal distribution of $g=ax_1x_2+bx_2x_1$ whenever $x_1$ and $x_2$ are infinitesimally Boolean independent in an infinitesimal probability space $(\cA,\phi,\phi')$. 

Let $(\cA,\phi,\phi')$ be an infinitesimal probability space, and suppose elements $x_1$ and $x_2$ in $\cA$ are infinitesimally Boolean independent. By Theorem \ref{thm:Upper-Boolean In}, $X_1=\begin{bmatrix}
    x_1 & 0 \\ 0 & x_1
\end{bmatrix}$ and $X_2=\begin{bmatrix}
    x_2 & 0 \\ 0 & x_2
\end{bmatrix}$ are Boolean independent in $(\widetilde{\cA},\widetilde{\mathbb{C}},\widetilde{\phi})$. Let $G=aX_1X_2+bX_2X_1$. We 
note that $\widetilde{\mathbb{C}}$ is a commutative algebra; as a result, we have that $\beta_n^{\widetilde{\phi}}(G)$ follows the same computation as we proved for $\beta_n(g)$. To be precise, $\beta_n^{\widetilde{\phi}}(G)$ is now $2 \times 2$ upper triangular matrices, and we have 
\begin{equation}\label{Eqn: Upper-Boolean p}
\beta_n^{\widetilde{\phi}}(G) =
\begin{cases}
    ab(\beta_2^{\widetilde{\phi}}(X_1)\beta_1^{\widetilde{\phi}}(X_2)^2+ \beta_2^{\widetilde{\phi}}(X_2)\beta_1^{\widetilde{\phi}}(X_1)^2)\Big(ab\beta_2^{\widetilde{\phi}}(X_1)\beta_2^{\widetilde{\phi}}(X_2)\Big)^{(n-2)/2} & \text{if } n \text{ is even}\\
     (a+b)\beta_1^{\widetilde{\phi}}(X_1)\beta_1^{\widetilde{\phi}}(X_2) \Big(ab\beta_2^{\widetilde{\phi}}(X_1)\beta_2^{\widetilde{\phi}}(X_2)\Big)^{(n-1)/2}  & \text{if } n \text{ is odd}
\end{cases}
\end{equation}
Suppose $p=x_1x_2+x_2x_1$, then we recall that $\beta_1(p)=2\beta_1(x_1)\beta_1(x_2)$ and $\beta_2(p)=\beta_1(x_1)^2\beta_2(x_2)+\beta_1(x_2)^2\beta_2(x_1)$. It can easily to deduce that
\begin{eqnarray*}
    \beta_1'(p)&=& 2\beta_1'(x_1)\beta_1(x_2)+2\beta_1(x_1)\beta_1'(x_2), \\
    \beta_2'(p)&=& 2\beta_1(x_1)\beta_1'(x_1)\beta_2(x_2)+\beta_1(x_1)^2\beta_2'(x_2)+2\beta_1(x_2)\beta_1'(x_2)\beta_2(x_1)+\beta_1(x_2)^2\beta_2'(x_1). 
\end{eqnarray*}
\begin{thm}\label{thm: inf-Boolean cumulant}
Suppose that $x_1$ and $x_2$ are infinitesimally Boolean independent, and let $g=ax_1x_2+bx_2x_1$, we have  
$$\beta_n'(g) =
\begin{cases}
     (ab)^{\frac{n}{2}}(\beta_2(x_1)\beta_2(x_2))^{\frac{n-4}{2}}\Big[\frac{n-2}{2}\beta_2(p)\big(\beta_2(x_1)\beta_2'(x_2)+\beta_2'(x_1)\beta_2(x_2)\big)\Big]& \\
    \qquad\qquad\qquad\qquad\mbox{}+(ab)^{\frac{n}{2}} 
     \beta_2'(p)\, \big[\beta_2(x_1)\beta_2(x_2)\big]^{\frac{n-2}{2}}& \text{if } n \text{ is even}\\
     (a+b)(ab)^{\frac{n-1}{2}}(\beta_2(x_1)\beta_2(x_2))^{\frac{n-3}{2}}\Big[\frac{n-1}{2}\beta_1(p)\big(\beta_2(x_1)\beta_2'(x_2)+\beta_2'(x_1)\beta_2(x_2)\big)\Big]& \\
    \qquad\qquad\qquad\qquad\mbox{}+  (a+b)(ab)^{\frac{n-1}{2}}\beta'_1(p)\,\big[\beta_2(x_1)\beta_2(x_2)\big]^{\frac{n-1}{2}}  & \text{if } n \text{ is odd}
\end{cases}.
$$
\end{thm}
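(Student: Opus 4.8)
The plan is to read the formula off the $\widetilde{\mathbb{C}}$-valued Boolean cumulant computation recorded in~\eqref{Eqn: Upper-Boolean p} by extracting the $(1,2)$-entries of the matrices involved. Recall from Theorem~\ref{thm:Upper-Boolean In} that $X_i=\left[\begin{smallmatrix} x_i & 0\\ 0 & x_i\end{smallmatrix}\right]$ are Boolean independent in $(\widetilde{\cA},\widetilde{\mathbb{C}},\widetilde{\phi})$. Since $X_i^m=\left[\begin{smallmatrix} x_i^m & 0\\ 0 & x_i^m\end{smallmatrix}\right]$ we have $\widetilde{\phi}(X_i^m)=\left[\begin{smallmatrix} m_m(x_i) & m_m'(x_i)\\ 0 & m_m(x_i)\end{smallmatrix}\right]$, and because $\widetilde{\mathbb{C}}$ is commutative, solving the $\widetilde{\mathbb{C}}$-valued Boolean moment--cumulant relation gives $\beta_m^{\widetilde{\phi}}(X_i)=\left[\begin{smallmatrix} \beta_m(x_i) & \beta_m'(x_i)\\ 0 & \beta_m(x_i)\end{smallmatrix}\right]$, the diagonal carrying the scalar Boolean cumulant of $x_i$ and the corner its infinitesimal Boolean cumulant (this matches the definition of infinitesimal Boolean cumulants via the embedding of \cite{PT21}). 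Similarly $G=aX_1X_2+bX_2X_1=\left[\begin{smallmatrix} g & 0\\ 0 & g\end{smallmatrix}\right]$, so $\beta_n^{\widetilde{\phi}}(G)=\left[\begin{smallmatrix} \beta_n(g) & \beta_n'(g)\\ 0 & \beta_n(g)\end{smallmatrix}\right]$; hence $\beta_n'(g)$ is exactly the $(1,2)$-entry of the right-hand side of~\eqref{Eqn: Upper-Boolean p}.

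Next I would invoke the elementary fact that the matrices $T=\left[\begin{smallmatrix} t & t'\\ 0 & t\end{smallmatrix}\right]$ form a commutative ring on which $T\mapsto t'$ is a derivation: $\prod_i T_i=\left[\begin{smallmatrix} \prod_i t_i & \sum_j t_j'\prod_{i\ne j}t_i\\ 0 & \prod_i t_i\end{smallmatrix}\right]$. Thus extracting the $(1,2)$-entry of~\eqref{Eqn: Upper-Boolean p} amounts to applying the Leibniz rule to the polynomial expression for $\beta_n(g)$ from Theorem~\ref{thm: Boolean poly cumulants}: differentiate it as a function of the quantities $\beta_1(x_1),\beta_2(x_1),\beta_1(x_2),\beta_2(x_2)$, replacing each occurrence in turn by its primed counterpart and summing, with $a$ and $b$ treated as constants.

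Carrying this out is then routine. For $n$ even, write $\beta_n(g)=(ab)^{n/2}\bigl(\beta_2(x_1)\beta_1(x_2)^2+\beta_2(x_2)\beta_1(x_1)^2\bigr)\bigl(\beta_2(x_1)\beta_2(x_2)\bigr)^{(n-2)/2}$; the bracketed factor equals $\beta_2(p)$ and its derivative is the displayed expression for $\beta_2'(p)$, while the derivative of $\bigl(\beta_2(x_1)\beta_2(x_2)\bigr)^{(n-2)/2}$ is $\tfrac{n-2}{2}\bigl(\beta_2(x_1)\beta_2(x_2)\bigr)^{(n-4)/2}\bigl(\beta_2'(x_1)\beta_2(x_2)+\beta_2(x_1)\beta_2'(x_2)\bigr)$; the product rule then produces the stated identity (when $n=2$ the term carrying a negative exponent has coefficient $0$ and is omitted). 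The odd case is handled in the same way, starting from $\beta_n(g)=(a+b)(ab)^{(n-1)/2}\beta_1(x_1)\beta_1(x_2)\bigl(\beta_2(x_1)\beta_2(x_2)\bigr)^{(n-1)/2}$ and using the relation for $\beta_1'(p)$, then collecting terms.

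I do not expect any genuine difficulty; the only point needing care is the first one --- the identification of $\beta_n'(g)$ with the $(1,2)$-entry of $\beta_n^{\widetilde{\phi}}(G)$, i.e.\ the observation that over the commutative base $\widetilde{\mathbb{C}}$ the operator-valued Boolean moment--cumulant inversion decouples into the scalar inversion on the diagonal and its formal derivative in the corner. Once this is granted, what remains is exactly the commutative bookkeeping above. An alternative route, avoiding the embedding, would be to prove an infinitesimal analogue of Lemma~\ref{Lem: Boolean poly cumulants} directly from the definition of infinitesimal Boolean independence and then mimic the proof of Theorem~\ref{thm: Boolean poly cumulants}; this is more laborious, so I would not pursue it.
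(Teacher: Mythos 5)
Your proposal is correct and follows essentially the same route as the paper: it passes to the upper-triangular embedding $(\widetilde{\cA},\widetilde{\mathbb{C}},\widetilde{\phi})$, uses Equation \eqref{Eqn: Upper-Boolean p}, and identifies $\beta_n'(g)$ with the $(1,2)$-entry of $\beta_n^{\widetilde{\phi}}(G)$. Your Leibniz-rule packaging of the entry extraction is just a tidier phrasing of the paper's explicit multiplication of the $2\times 2$ upper-triangular matrices, so no further comment is needed.
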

\begin{proof}
Observe that the $(1,2)$-entry of $\beta_n^{\widetilde{\phi}}(G)$ is $\beta_n'(g)$. On the other hand, 
\begin{eqnarray*}
     &&ab(\beta_2^{\widetilde{\phi}}(X_1)\beta_1^{\widetilde{\phi}}(X_2)^2+ \beta_2^{\widetilde{\phi}}(X_2)\beta_1^{\widetilde{\phi}}(X_1)^2)\Big(ab\beta_2^{\widetilde{\phi}}(X_1)\beta_2^{\widetilde{\phi}}(X_2)\Big)^{(n-2)/2}\\ 
     &=&ab\begin{bmatrix}
         \beta_2(p) & \beta_2'(p) \\
         0 & \beta_2(p)
     \end{bmatrix} \Big(ab\begin{bmatrix}
         \beta_2(x_1) & \beta_2'(x_1) \\ 
         0 & \beta_2(x_1)
     \end{bmatrix}\cdot \begin{bmatrix}
         \beta_2(x_2) & \beta_2'(x_2) \\ 
         0 & \beta_2(x_2)
     \end{bmatrix}\Big)^{(n-2)/2} \\
     &=& ab\begin{bmatrix}
         \beta_2(p) & \beta_2'(p) \\
         0 & \beta_2(p)
     \end{bmatrix}\Big(ab\begin{bmatrix}
         \beta_2(x_1)\beta_2(x_2) & \beta_2(x_1)\beta_2'(x_2)+\beta_2'(x_1)\beta_2(x_2) \\ 0 & \beta_2(x_1)\beta_2(x_2) 
     \end{bmatrix}\Big)^{(n-2)/2}  \\
     &=& (ab)^{\frac{n}{2}}\begin{bmatrix}
         \beta_2(p) & \beta_2'(p) \\
         0 & \beta_2(p)
     \end{bmatrix} \\
     && \mbox{} \times \begin{bmatrix}
         (\beta_2(x_1)\beta_2(x_2))^{(n-2)/2} & \frac{n-2}{2}(\beta_2(x_1)\beta_2(x_2))^{(n-4)/2}\big(\beta_2(x_1)\beta_2'(x_2)+\beta_2'(x_1)\beta_2(x_2)\big) \\ 0 &  (\beta_2(x_1)\beta_2(x_2))^{(n-2)/2}\end{bmatrix} 
\end{eqnarray*}
Thus, the $(1,2)$-entry of $ab(\beta_2^{\widetilde{\phi}}(X_1)\beta_1^{\widetilde{\phi}}(X_2)^2+ \beta_2^{\widetilde{\phi}}(X_2)\beta_1^{\widetilde{\phi}}(X_1)^2)\Big(ab\beta_2^{\widetilde{\phi}}(X_1)\beta_2^{\widetilde{\phi}}(X_2)\Big)^{(n-2)/2}$ is
\[
 (ab)^{\frac{n}{2}}\frac{n-2}{2}\beta_2(p)   (\beta_2(x_1)\beta_2(x_2))^{\frac{n-4}{2}}\Big(\beta_2(x_1)\beta_2'(x_2)+ \beta_2'(x_1)\beta_2(x_2)\Big)+(ab)^{\frac{n}{2}}\beta_2'(p) (\beta_2(x_1)\beta_2(x_2))^{\frac{n-2}{2}}  
\]
\[\mbox{}=(ab)^{\frac{n}{2}}
(\beta_2(x_1)\beta_2(x_2))^{\frac{n-4}{2}}\bigg(\frac{n-2}{2}\beta_2(p)\Big(\beta_2(x_1)\beta_2'(x_2)+\beta_2'(x_1)\beta_2(x_2)\Big)\bigg)
\]
\[
\mbox{} +
(ab)^{\frac{n}{2}}\beta_2'(p) \big(\beta_2(x_1)\beta_2(x_2)\big)^{\frac{n-2}{2}}. 
\]
Applying Equation \eqref{Eqn: Upper-Boolean p}, we obtain
\begin{align*}
\beta_n'(g) & =  (ab)^{\frac{n}{2}}
(\beta_2(x_1)\beta_2(x_2))^{\frac{n-4}{2}}\bigg(\frac{n-2}{2}\beta_2(p)\Big(\beta_2(x_1)\beta_2'(x_2)+\beta_2'(x_1)\beta_2(x_2)\Big)\bigg) \\
&\quad +(ab)^{\frac{n}{2}}\beta_2'(p) \big(\beta_2(x_1)\beta_2(x_2)\big)^{\frac{n-2}{2}}. 
\end{align*}
whenever $n$ is even. By applying Equations \eqref{Eqn: Upper-Boolean p}, the remaining cases can be done via an analogous argument. 
\end{proof}

Following Theorem \ref{thm: inf-Boolean cumulant}, we have the infinitesimal distributions of the anti-commutator $p=x_1x_2+x_2x_1$ and the commutator $q=i(x_1x_2-x_2x_1)$ as follows.  
\begin{cor}
$$\beta_n'(p) =
\begin{cases}
     (\beta_2(x_1)\beta_2(x_2))^{\frac{n-4}{2}}\Big[\frac{n-2}{2}\beta_2(p)\big(\beta_2(x_1)\beta_2'(x_2)+\beta_2'(x_1)\beta_2(x_2)\big)\Big]
     & \\
    \qquad\qquad\qquad\qquad\qquad\qquad\mbox{}+ 
     \beta_2'(p)\, \big[\beta_2(x_1)\beta_2(x_2)\big]^{\frac{n-2}{2}} & \text{if } n \text{ is even}\\
     (\beta_2(x_1)\beta_2(x_2))^{\frac{n-3}{2}}\Big[\frac{n-1}{2}\beta_1(p)\big(\beta_2(x_1)\beta_2'(x_2)+\beta_2'(x_1)\beta_2(x_2)\big)\Big]& \\
    \qquad\qquad\qquad\qquad\qquad\qquad\mbox{}+ \beta'_1(p)\,\big[\beta_2(x_1)\beta_2(x_2)\big]^{\frac{n-1}{2}}  & \text{if } n \text{ is odd}
\end{cases}
$$
and
$$
\beta_n'(q) =
\begin{cases}
    (\beta_2(x_1)\beta_2(x_2))^{\frac{n-4}{2}}\Big[\frac{n-2}{2}\beta_2(q)\big(\beta_2(x_1)\beta_2'(x_2)
    + \beta_2'(x_1)\beta_2(x_2)\big)\Big] & \\
    \qquad\qquad\qquad\qquad\qquad\qquad\mbox{}+ \beta_2'(q)\,\big[\beta_2(x_1)\beta_2(x_2)\big]^{\frac{n-2}{2}} & \text{if } n \text{ is even}\\
     0  & \text{if } n \text{ is odd}
\end{cases}.
$$
\end{cor}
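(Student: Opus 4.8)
The plan is to obtain this corollary directly from Theorem~\ref{thm: inf-Boolean cumulant} by specializing the pair $(a,b)$. The anticommutator $p = x_1 x_2 + x_2 x_1$ is the case $(a,b) = (1,1)$, and the commutator $q = i(x_1 x_2 - x_2 x_1) = i\,x_1 x_2 + (-i)\,x_2 x_1$ is the case $(a,b) = (i,-i)$. In both instances $ab = 1$, so every factor $(ab)^{n/2}$ or $(ab)^{(n-1)/2}$ appearing in Theorem~\ref{thm: inf-Boolean cumulant} collapses to $1$; the two cases then differ only through the value of $a+b$, which is $2$ for $p$ and $0$ for $q$.

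First I would handle $p$. Putting $(a,b) = (1,1)$ into Theorem~\ref{thm: inf-Boolean cumulant} and inserting the identities recorded immediately before it --- $\beta_1(p) = 2\beta_1(x_1)\beta_1(x_2)$, $\beta_2(p) = \beta_1(x_1)^2\beta_2(x_2) + \beta_1(x_2)^2\beta_2(x_1)$, together with the displayed formulas for $\beta_1'(p)$ and $\beta_2'(p)$ --- the claimed expression for $\beta_n'(p)$ falls out in both parities. As a consistency check, the same answer is produced by formally differentiating the non-infinitesimal formulas $\beta_n(p) = \beta_2(p)\,(\beta_2(x_1)\beta_2(x_2))^{(n-2)/2}$ for $n$ even, and $\beta_n(p) = \beta_1(p)\,(\beta_2(x_1)\beta_2(x_2))^{(n-1)/2}$ for $n$ odd, taken from the corollary to Theorem~\ref{thm: Boolean poly cumulants}, with $\phi'$ playing the role of the derivation.

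Next I would handle $q$. Because $a+b = 0$, the odd-$n$ branch of Theorem~\ref{thm: inf-Boolean cumulant} vanishes identically, giving $\beta_n'(q) = 0$ for odd $n$. For even $n$, the substitution $(a,b) = (i,-i)$ with $ab = 1$ reduces the even-$n$ branch to the same shape as for $p$, now carrying $\beta_2(p)$ and $\beta_2'(p)$; I would then rewrite these using $\beta_2(q) = \beta_2(p)$, already observed in \S\ref{sec:boolean-independent}, together with the consequent identity $\beta_2'(q) = \beta_2'(p)$, which follows by applying $\phi'$ to $\beta_2(q) = \beta_1(x_1)^2\beta_2(x_2) + \beta_1(x_2)^2\beta_2(x_1)$ exactly as in the derivation of $\beta_2'(p)$.

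There is no genuine obstacle here: the substantive computation is already contained in Theorem~\ref{thm: inf-Boolean cumulant}, and the corollary is merely its two natural specializations. The only care needed is the bookkeeping around how the prefactors $a+b$ and $ab$ interact with the normalizations of $\beta_1(p)$ and $\beta_2(p)$ versus the raw products $\beta_i(x_1)\beta_j(x_2)$, and the one-line verification that $\beta_2'(q) = \beta_2'(p)$.
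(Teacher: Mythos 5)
Your route coincides with the paper's: the corollary is stated there with no separate argument, precisely as the specializations $(a,b)=(1,1)$ and $(a,b)=(i,-i)$ of Theorem \ref{thm: inf-Boolean cumulant}, and your observations that $ab=1$ in both cases, that $a+b=0$ kills the odd-$n$ branch for $q$, and that $\beta_2(q)=\beta_2(p)$ together with $\beta_2'(q)=\beta_2'(p)$ convert the even-$n$ expression into the stated form, are exactly the intended ingredients.

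One bookkeeping point deserves more than the passing mention you give it, because it affects your claim that the formula for $p$ ``falls out in both parities.'' The odd-$n$ branch of Theorem \ref{thm: inf-Boolean cumulant}, taken literally, does \emph{not} reproduce the odd-$n$ case of the corollary. Tracing the $(1,2)$-entry of Equation \eqref{Eqn: Upper-Boolean p} for odd $n$, one has $\beta_1^{\widetilde\phi}(X_1)\beta_1^{\widetilde\phi}(X_2)=\tfrac12\left[\begin{smallmatrix}\beta_1(p)&\beta_1'(p)\\0&\beta_1(p)\end{smallmatrix}\right]$, since $\beta_1(x_1)\beta_1(x_2)=\tfrac12\beta_1(p)$ and $\beta_1(x_1)\beta_1'(x_2)+\beta_1'(x_1)\beta_1(x_2)=\tfrac12\beta_1'(p)$; hence the odd-$n$ prefactor should be $\tfrac{a+b}{2}(ab)^{(n-1)/2}$ rather than the printed $(a+b)(ab)^{(n-1)/2}$. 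With $(a,b)=(1,1)$ a literal substitution therefore yields twice the corollary's odd-$n$ expression (already at $n=1$ it would give $2\beta_1'(p)$ instead of $\beta_1'(p)$), whereas the corrected prefactor $\tfrac{a+b}{2}=1$ gives exactly the stated formula and also agrees with your consistency check of formally differentiating $\beta_n(p)=\beta_1(p)\big(\beta_2(x_1)\beta_2(x_2)\big)^{(n-1)/2}$. So your two routes do not in fact return ``the same answer'' unless you first repair (or rederive from \eqref{Eqn: Upper-Boolean p}) the odd case of the theorem; this is precisely the normalization issue between $\beta_1(p)$ and $\beta_1(x_1)\beta_1(x_2)$ that you flagged but did not resolve. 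Once that half-factor is tracked, your argument, including the one-line verification of $\beta_2'(q)=\beta_2'(p)$, is complete and is the same derivation the paper intends; the even-$n$ branch and the vanishing of the odd-$n$ branch for $q$ go through exactly as you describe.
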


\section*{Acknowledgement}

The authors would like to express our sincere gratitude to the anonymous reviewers for their insightful comments and constructive suggestions, which greatly contributed to the improvement of this manuscript. The revision of this paper has also benefited from the comments and suggestions of Yen Jen Cheng, Nicolas Gilliers, and Takahiro Hasebe.


\thebottomline\end{document}